\newcommand{\eqdef}{\stackrel{\scriptscriptstyle\rm def}{=}}
\newtheorem{theorem}{Theorem}%[section]
\newtheorem{corollary}[theorem]{Corollary}
\newtheorem{proposition}[theorem]{Proposition}
\newtheorem{lemma}[theorem]{Lemma}%[section]
\newtheorem{question}[theorem]{Question}
\newtheorem{definition}[theorem]{Definition}
\newtheorem{example}[theorem]{Example}
\newtheorem*{thmA}{Theorem A}
\newtheorem*{thmB}{Theorem B}
\newcommand{\beha}{\begin{enumerate}}
\newcommand{\behe}{\end{enumerate}}
\renewcommand{\epsilon}{\varepsilon}
\newcommand{\Per}{{\rm Per}}
\newcommand{\Or}{\mathcal{O}}
\newcommand{\cM}{\mathcal{M}}
\newcommand{\bR}{{\mathbb R}}
\newcommand{\bZ}{{\mathbb Z}}
\newcommand{\bN}{{\mathbb N}}
\newcommand{\bQ}{{\mathbb Q}}
\newcommand{\cF}{{\mathcal F}}
\newcommand{\cW}{{\mathcal W}}
\newcommand{\cG}{{\mathcal G}}
\newcommand{\cA}{{\mathcal A}}
\newcommand{\cL}{{\mathcal L}}
\newcommand{\Ptop}{P_{\rm top}} 
\newcommand{\cS}{{\mathcal S}}
\DeclareMathSymbol{\varnothing}{\mathord}{AMSb}{"3F}
\renewcommand{\emptyset}{\varnothing}
\DeclareMathOperator{\ClosedShift}{\Sigma_{\text{\rm invariant}}}
\newcommand{\Gen}{\mathcal{G}}
\newcommand{\calA}{{\mathcal{A}}}
\newcommand{\calF}{{\mathcal{F}}}
\newcommand{\calL}{{\mathcal{L}}}
\newcommand{\calM}{{\mathcal{M}}}
\newcommand{\calO}{{\mathcal{O}}}
\def\1{1\!\!1}
\def\and{\text{ and }}
           \def\htop{h_{{\rm top}}}
         \def\P{\text{{\rm P}}}
\def\ES{{\rm ES}}
                        \def\^{\tilde}
\def\Per{{\rm Per}}
\def\var{{\rm var}}
\def\Per{{\rm Per}}
\def\1{1\!\!1}
\title[]{Computability of topological pressure on compact shift spaces beyond finite type}
\author{Michael Burr}
\author{Suddhasattwa Das}
\author{Christian Wolf}
\author{Yun Yang}
\address{Michael Burr, School of Mathematical and Statistical Sciences, Clemson University}
\email{burr2@clemson.edu}
\address{Suddhasattwa Das, Courant Institute of Mathematical Sciences, New York University and Department of Mathematics, George Mason University}
\email{dass@cims.nyu.edu}
\address{Christian Wolf, Department of Mathematics, City College of New York}
\email{cwolf@ccny.cuny.edu}
\address{Yun Yang, Department of Mathematics, Virginia Polytechnic Institute and State University}
\email{yunyang@vt.edu}
\thanks{Burr was partially supported by grants from the National Science Foundation (CCF-1527193 and DMS-1913119).}
\thanks{Das was partially supported by ONR MURI grant N00014-19-1-242 and ONR YIP grant N00014-16-1-264.}
\thanks{Wolf was partially supported by  grants from  the Simons Foundation (\#637594 to Christian Wolf) and PSC-CUNY (TRADB-51-63715 to Christian Wolf).}
\thanks{Yang was partially supported by a grant from the National Science Foundation (DMS-2000167).}
\begin{document}

%-_-_-_-_-_-_-_-_-_-_-_-_-_-_-_-_-_-_-_-_-_-_-_-_-_-_-_-_-_-_-_-_-_-_-_-_-_-_-_-_-_-_-_-_-_-_-_-_-_-_-_-_-_-_-_-_-_-_-_-_-_-_-_-_

\begin{abstract}
We investigate the computability (in the sense of computable analysis) of the topological pressure $\Ptop(\phi)$ on compact shift spaces $X$ for continuous potentials $\phi:X\to\bR$.
This question has recently been  studied for subshifts of finite type (SFTs) and their factors (Sofic shifts).
We develop a framework to address the computability of the topological pressure on general shift spaces and  apply this framework to  coded shifts. In particular, we prove the computability of the topological pressure for all continuous potentials on S-gap shifts,  generalized gap shifts, and particular Beta-shifts. We also construct shift spaces which, depending on the potential,  exhibit  computability and non-computability of the topological pressure.  We further prove  that the generalized pressure function $(X,\phi)\mapsto \Ptop(X,\phi\vert_{X})$ is not computable for a large set of shift spaces $X$ and potentials $\phi$.  In particular, the entropy map $X\mapsto h_{\rm top}(X)$ is computable at a shift space $X$ if and only if $X$ has zero topological entropy.
Along the way of developing these computability results, we derive several ergodic-theoretical properties of coded shifts which are of independent interest beyond the realm of computability.

\end{abstract}

\maketitle

%-_-_-_-_-_-_-_-_-_-_-_-_-_-_-_-_-_-_-_-_-_-_-_-_-_-_-_-_-_-_-_-_-_-_-_-_-_-_-_-_-_-_-_-_-_-_-_-_-_-_-_-_-_-_-_-_-_-_-_-_-_-_-_-_
\section{Introduction}
%...................................................................................................................................................................................................................................................
\subsection{Motivation}
The topological pressure  is a functional acting on the space of continuous potentials  $C(X,\bR)$ of a  dynamical system $f:X\to X$.  The pressure is a natural generalization of the topological entropy and encodes several  properties of the underlying dynamical system.  
Moreover, the pressure is one of the main components of the thermodynamic formalism, which, in turn, has played a key role in the development of the theory of dynamical systems. 
 Additionally, the variational principle for the topological pressure connects topological and measure-theoretic dynamics in a natural way.  This connection is often exploited in applications of the topological pressure including in the study of Lyapunov exponents, fractal dimensions, multi-fractal spectra, natural invariant measures (e.g., maximal entropy, physical, SRB, zero-temperature, and maximal dimension measures), and rotation sets, see \cite{Bowen,Keller1998,MauldinEtAl2003,Pesin1997,Ruelle2004} and the references therein. 
In addition, the pressure has seen applications in both mathematics and related fields, such as statistical physics and mathematical biology, see \cite{ADG,Demetrius2013,DG,DW1,Georgii2011,TS} and the references therein. For an overview of entropy and pressure, we refer the reader to the books and survey articles \cite{Bowen,Katok2007,MauldinEtAl2003,Ruelle2004,Walters1973}.

Computability theory determines the feasibility of computational experiments by analyzing the possible precision of the output when using approximate data.  Without a precision guarantee, a computer experiment might miss or misinterpret interesting behaviors.  The main idea behind computability theory is to represent mathematical objects, e.g., points, sets, and functions, by convergent sequences produced by Turing machines (computer algorithms for our purposes).  We say that a point, set, or function is computable if there exists a Turing machine that produces an approximation to any prescribed precision.  Using convergent sequences of points instead of single points allows for the study of the behavior of a larger class of objects by increasing the precision of an approximation, as needed, to adjust for sensitivity to the initial conditions.  For more thorough introductions to computability theory, see, \cite{BBRY,BHW, BY,BSW,GHR,RW,RYSurvey,K}.

Computability in dynamical systems has been the subject of intensive study during the last 20 years. In particular, the study of the computability of Julia sets, see, e.g., \cite{BBRY, BBY1, Br1, BY3, BY,  BY2, D1, DY}, and the computability of certain natural invariant measures, see \cite{BBRY,GHR,JP1,JP2}, has been addressed by many researchers.

Since the input to a computer algorithm must be finite, but descriptions of dynamical systems and potentials may require an infinite amount of data, algorithms may only use approximations given by a finite subset of the defining data.
 Therefore, it is not immediately clear whether it is possible for a computer to approximate the pressure of a given continuous potential on a dynamical system accurately.  Computability theory provides an approach to address the question of whether a finite amount of input data is sufficient to compute an approximation of the pressure.  We provide both necessary conditions as well as sufficient conditions to be able to approximate the pressure of a potential defined on a shift space.

Recently, there has been progress towards  understanding  the computability properties of the entropy of shift spaces, see, e.g., \cite{lind1984entropies,SpandlHertling_lang_2008,milnor2002entropy,Simonsen2006,Spandl2007,BSW,GHRS:Computability}.  
We refer the reader to \cite{GHRS:Computability,milnor2002entropy} for discussions about the computability of the topological entropy of general (non-symbolic) classes of dynamical systems.
The only classes of shift spaces for which the computability of the topological pressure is fully understood are subshifts of finite type (SFTs) and Sofic shifts \cite{Spandl_sofic_2008} (see also \cite{BSW}).  These two classes of subshifts share the feature that they can be completely described through finite combinatorial data, e.g., a transition matrix or a finite directed labeled graph.  
Hertling and Spandl \cite{hertling2008computability} and Spandl \cite{Spandl2007} proved that the language of an S-gap shift is decidable if and only if the set $S$ is computable. Recently, Hochman and Meyerovitch \cite{hochman2010characterization} proved that a real number is the entropy of a multidimensional SFT or Sofic shift if and only if it is right recursively enumerable.

Our goal is to establish computability results for the topological pressure for shift spaces which are described through countable, but not finite, data.  It is straight-forward to prove that once the language of a shift space is given by an oracle (or a Turing machine), then the topological pressure can be approximated from above by a computer, i.e., is upper semi-computable.  Our approach is  to prove that the topological pressure can be approximated from below by considering a sequence $\{X_m\}_{m\in\bN}$ of Sofic shifts which ``exhausts" $X$ in the sense that the topological pressure of $\phi$ restricted to these $X_m$'s converges to $\Ptop(\phi)$ from below.  This computation is nontrivial because the pressure does not vary continuously as the shift space changes. 
In the special case of the zero potential, the existence of such an approximation implies that $f$ is an almost Sofic shift, see \cite{Petersen1986}.
We provide sufficient conditions for potentials so that this sequence of approximations converges from below to the pressure.  We note that, in most cases, it is not sufficient to only have access to the language of the shift  space to be able to compute such an approximation, see Theorem B.  In order to compute the pressure, it is, therefore, necessary to provide additional information about the shift space.  We study the case where this additional information is given by a set of generators, i.e., $X$ is a coded shift.  

To emphasize the generality of the class of coded shifts, we recall that every shift space $X$ can be expressed as the closure of the set of
bi-infinite paths on a countable directed labeled graph. Such a graph is called  a representation of $X$, see \cite{MarcusLind1995}.
Coded shifts are precisely those shift spaces which have a representation by an irreducible countable directed labeled graph.
We remark that many classes of shifts spaces are coded shifts.  These classes include S-gap and generalized gap shifts, Sofic shifts, and Beta-shifts.  
To the best of our knowledge, generalized gap shifts are a new concept in symbolic dynamics.
They have the potential to provide examples which exhibit new dynamical phenomena, but their full study 
is beyond the scope of this paper.

We develop a complete computability theory for all coded shifts in Theorem A. Moreover, in Theorem B, we show that solely based on the language of shift spaces, the topological pressure is not computable for most potentials. To derive these results, we establish a general framework which can be applied to determine the computability of the topological pressure for other classes of shift spaces.  We observe that, in general, neither the generators nor the language of a coded shift can be derived from the other, so both are necessary to determine computability of the pressure.  

\subsection{Main results}

A two-sided shift space $X$ over a finite alphabet $\cA=\{0,\dots,d-1\}$ is {\em coded} if there exists a {\em generating set} $\cG=\{g_i\}_{i\in\bN}$ which is a subset of the set of all finite words, denoted by $\cA^\ast$, such that $X=X(\cG)$ is the smallest shift space that contains all bi-infinite concatenations of generators, i.e.,
\[ X_{\rm seq}\eqdef\{ \cdots g_{i_{-2}}g_{i_{-1}}g_{i_0}g_{i_1}g_{i_2}\cdots: i_j\in \bN, j\in \bZ\}.\]
In other words, $X$ is the topological closure of $X_{\text{seq}}$, for more details, see \cite{BlanchardHansel1986,FieF,MarcusLind1995}. 
 Let $X_m=X(\{g_1,\dots,g_m\})$ be the coded shift generated by the first $m$ generators in $\cG$.  
For a potential $\phi\in C(X,\bR)$, we denote the topological pressure by $\Ptop(\phi)=\Ptop(X,\phi)$. We refer the reader to Section \ref{sec:pressure+variation} for additional details.  Since each $X_m$ is a Sofic shift, the topological pressure $\Ptop\left(X_m,\phi|_{X_m}\right)$ is computable, see \cite{Spandl_sofic_2008}.  
We observe that $\{X_m\}_{m\in\bN}$ is an increasing sequence of shift spaces in $X_{\rm seq}$. In order to establish computability results for the topological pressure, we study when $\Ptop\left(X_m,\phi|_{X_m}\right)$ converges to $\Ptop(\phi)$.  For this convergence to hold, we show that the pressure on $X$ cannot be ``concentrated" on $X\setminus \bigcup_m X_m$.  To this end, we introduce the following definitions:

\begin{definition}\label{def:FSP}
Given a potential $\phi\in C(X,\bR)$, we say that the pair $(X,\phi)$ has {\em full sequential pressure} if
$$\Ptop(\phi)=\sup_{\mu\in \cM_X}\{P_\mu(\phi): \mu(X_{\rm seq})=1\},$$
where $P_\mu(\phi)=h_\mu(f)+\int \phi\, d\mu$ and $\cM_X$ is the set of $f$-invariant Borel probability measures on $X$.
We denote the set of potentials on $X$ with full sequential pressure by $FSP(X)$. 
Moreover, we say $X$ has {\em full sequential entropy} if the zero potential has full sequential pressure.
\end{definition}

\begin{definition}\label{def:FSSP}
Given a potential $\phi\in C(X,\bR)$, we say that the pair $(X,\phi)$ has {\em strict full sequential pressure} if 
$$
\sup_{\mu\in \cM_X}\{P_\mu(\phi): \mu(X_{\rm seq})=1\}>\sup_{\mu\in \cM_X}\{P_\mu(\phi): \mu(X\setminus X_{\rm seq})=1\}.
$$
We denote the closure of the set of potentials on $X$ with strict full sequential pressure by $FSSP(X)$ which we call the 
set potentials with {\em full sigma-sequential pressure}.
\end{definition}

From the properties of the pressure on shift-invariant subsets, see Section \ref{sec:pressure+variation}, we observe that $FSSP(X)\subset FSP(X)$.  In Theorem A, we show that, under suitable conditions, pressures of potentials in $FSSP(X)$ can be computed while whose in the complement of $FSP(X)$ cannot be computed.  We conjecture that the equality $FSP(X)=FSSP(X)$ holds for all coded shifts.

We use three notions of computability of functions: computable functions, semi-computable functions, and computability at a point in the domain.  Given a set $S\subset C(X,\bR)$, we say that a function $g:S\to\bR$ is computable if, for any input function $\phi\in S$, $g(\phi)$ can be calculated to any prescribed precision.  Semi-computability is a weaker notion of computability corresponding to one-sided convergence.  Additionally, computability at a point is a computable version of being continuous at a point.  We refer the reader to Section 3 and \cite{BurrWolf2018,GHR} for the precise definitions and details.

We show in Lemma \ref{lem:pres_upper} that once the language (i.e., the set of admissible words of $X$) of a shift space is given, the topological pressure function is upper semi-computable on $C(X,\bR)$.  Therefore, in order to classify the computability of the topological pressure function, it suffices to prove that this function is also lower semi-computable. In other words, we show the convergence of the topological pressure for $X_m$ identified above.  

We develop an ergodic-theoretic approach based on inducing to obtain a classification of the computability of the topological pressure.  In order to apply this technique, we use the following technical condition:
\begin{definition}[cf \cite{Pavlov:UniqueRepresentability}\footnote{We note that the definitions of unique decipherability and unique decomposition are reversed in the arXiv version \cite{pavlov} when compared to the journal version \cite{Pavlov:UniqueRepresentability}.  Our definition matches that of the journal version.}]
A coded shift $X$ is {\em uniquely representable} (also known  as {\em uniquely decomposable} or {\em unambiguously coded}) if there exists a generating set $\cG$ such that each $x\in X_{\rm seq}$ can be uniquely written
 as an infinite concatenation of elements in $\cG$. In this case, we say that $\cG$ is a {\em unique representation} of $X$.
\end{definition}  
The notion of uniquely representable differs from the weaker notion of uniquely decipherable \cite{BlanchardHansel1986} because the uniqueness there is required only for finite words.  We also note that the minimality of a generating set is a distinct concept from a unique representation.  The manuscript \cite{BPR21} includes a proof that all coded shifts are uniquely representable.

We prove the computability of the topological pressure as a function of the potential $\phi$ on a coded shift $X$ given by its generators and language.  We denote the set of all coded shifts that are contained in the bi-infinite full shift  $\Sigma$ by $\Sigma_{\rm{coded}}$.

\begin{thmA} 
Let $X$ be a coded shift with unique representation $\cG$.
\begin{enumerate}[\rm(i)]
\item If $\phi\in FSSP(X)$,  then $\Ptop(\phi)= \lim_{m\to\infty} \Ptop(X_m,\phi);$
\item Suppose that the language and generating set $\cG$ of $X$ are given by oracles.  Then there is a universal Turing machine, i.e., does not depend on $X$, that computes $\phi\mapsto \Ptop(\phi)$ for $\phi\in FSSP(X)$; and 
\item If $\phi\in C(\Sigma,\bR)$ with $\phi|_X\not\in FSP(X)$, then $\Ptop(\cdot,\phi)$ is not computable at $X$ in $\Sigma_{\rm{coded}}$.
\end{enumerate}
\end{thmA}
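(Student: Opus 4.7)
The plan is to prove (i) via an inducing argument that relates the ``sequential pressure'' of $X$ to the limit of Sofic-shift pressures, then derive (ii) from (i) combined with the upper semi-computability of pressure from the language, and finally derive (iii) by showing that $\{X_m\}$ itself witnesses a discontinuity of $\Ptop(\cdot,\phi)$ at $X$ in $\Sigma_{\rm{coded}}$. Since $X_m\subset X$ is increasing, the trivial inequality $\lim_m\Ptop(X_m,\phi)\leq\Ptop(\phi)$ is automatic, so the only content is the reverse direction.

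The heart of (i) is to establish the identity
\begin{equation*}
\lim_{m\to\infty}\Ptop(X_m,\phi)=\sup\{P_\mu(\phi):\mu\in\cM_X,\ \mu(X_{\rm seq})=1\}
\end{equation*}
for every $\phi\in C(X,\bR)$ when $\cG$ is a unique representation of $X$. The unique representability provides a measurable bijection (on a shift-invariant full-measure set) between the sequential part of $X$ and the full shift $\cG^{\bZ}$ on the generators, which lets one induce on the cross-section where a generator starts. An invariant measure $\mu$ with $\mu(X_{\rm seq})=1$ lifts to an invariant $\hat\mu$ on $\cG^{\bZ}$ whose return-time function is $\tau(\hat x)=|g_{\hat x_0}|$, and Abramov's formula gives $P_\mu(\phi)=(h_{\hat\mu}(\hat f)+\int\hat\phi\,d\hat\mu)/\int\tau\,d\hat\mu$, where $\hat\phi$ is the induced Birkhoff sum. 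Conditioning $\hat\mu$ on sequences that use only the symbols $\{1,\ldots,m\}$ (and renormalizing) produces an invariant probability measure on the truncated full shift whose projection lies in $\cM_{X_m}$, and a direct computation with Abramov's formula shows that the resulting pressures converge to $P_\mu(\phi)$. Taking the supremum over $\mu$ yields the identity. Part (i) then follows: when $\phi$ has strict full sequential pressure, the right-hand side of the identity equals $\Ptop(\phi)$ by the definition of $FSP(X)$, and the statement extends to the closure $FSSP(X)$ using the Lipschitz continuity of $\phi\mapsto\Ptop(\phi)$ with constant $1$ under the sup norm.

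For (ii), Lemma \ref{lem:pres_upper} provides upper semi-computability of $\phi\mapsto\Ptop(X,\phi)$ from the language of $X$. For lower semi-computability, use the oracle for $\cG$ to enumerate $g_1,g_2,\ldots$ and form each Sofic shift $X_m$, noting that $\Ptop(X_m,\phi)$ is uniformly computable in $m$ and $\phi$ by \cite{Spandl_sofic_2008}. Part (i) then turns the nondecreasing sequence $\Ptop(X_m,\phi)$ into a converging lower approximation of $\Ptop(\phi)$, establishing lower semi-computability and hence computability on $FSSP(X)$. The resulting Turing machine is universal because every step depends on $X$ only through the two supplied oracles.

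For (iii), the same Sofic approximation $\{X_m\}$ now witnesses the failure of computability at $X$. The languages $\calL(X_m)$ exhaust $\calL(X)$, so $X_m\to X$ in the natural language-based representation of $\Sigma_{\rm{coded}}$, while the identity of (i) gives $\Ptop(X_m,\phi)\to\sup\{P_\mu(\phi):\mu(X_{\rm seq})=1\}<\Ptop(X,\phi)$ precisely because $\phi|_X\notin FSP(X)$. Since a function computable at $X$ must be effectively continuous at $X$ in the representation of $\Sigma_{\rm{coded}}$, this strict discontinuity rules out computability at $X$. The main obstacle throughout is the inducing step of (i): one must verify that the truncated conditionals of $\hat\mu$ are genuine invariant probability measures, that the mean return time $\int\tau\,d\hat\mu_m$ does not diverge as $m\to\infty$, and that both the entropy and $\int\hat\phi\,d\hat\mu_m$ pass to the limit. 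The unique representability of $\cG$ and the quantitative control provided by working with \emph{strict} full sequential pressure (before taking the closure to $FSSP(X)$) are what make these estimates effective.
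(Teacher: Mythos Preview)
Your outlines for (ii) and (iii) are essentially what the paper does, but there is a genuine gap in your argument for (i). The step where you ``condition $\hat\mu$ on sequences that use only the symbols $\{1,\ldots,m\}$ and renormalize'' yields nothing in the case that matters. The set $Y_m=\{1,\ldots,m\}^{\bZ}$ is shift-invariant, so if $\hat\mu$ is ergodic then $\hat\mu(Y_m)\in\{0,1\}$; when $\mu(X_{\rm seq}\setminus X_{\rm fin})=1$ (precisely the nontrivial case, since otherwise $\mu$ is already supported on some $X_m$), one has $\hat\mu(Y_m)=0$ for every $m$ and there is no conditional probability measure to work with. No ``direct computation with Abramov's formula'' can recover $P_\mu(\phi)$ from a sequence of zero measures, so the obstacle you flag at the end is fatal rather than merely technical. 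Note also that the general identity $\lim_m\Ptop(X_m,\phi)=P_{\rm seq}(\phi)$ you assert for \emph{all} continuous $\phi$ is stronger than anything the paper proves or needs; for (iii) only the trivial inequality $\Ptop(X_m,\phi)\leq P_{\rm seq}(\phi)$ is required.

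The paper's route through (i) is genuinely different and explains why $FSSP(X)$, rather than $FSP(X)$, appears. One first reduces to locally constant $\phi$: this is essential because the induced potential $\widetilde\phi$ on the countable full shift $Y=\cG^{\bZ}$ is unbounded, and local constancy is exactly what guarantees $\widetilde\phi$ is \emph{acceptable} (uniformly continuous with finite oscillation) in the sense of Mauldin--Urbanski. One then invokes the countable-state approximation theorem (Theorem~\ref{countUM}), which gives $P_{\rm top}(Y,\widetilde\phi)=\sup_m P_{\rm top}(Y_m,\widetilde\phi)$; this is a nontrivial result in the thermodynamic formalism, not a conditioning argument. After normalizing so that $P_{\rm top}(\phi)=0$, one shows $P_{\rm top}(Y,\widetilde\phi)=0$ and pulls the finite-alphabet approximation back to $X_m$ via Abramov and Kac. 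Finally, the passage from locally constant potentials to all of $FSSP(X)$ uses density and $1$-Lipschitz continuity as you say, but the density step needs a nearby locally constant $\varphi$ with $P_{\rm seq}(\varphi)>P_{\rm lim}(\varphi)$ so that its equilibrium states sit on $X_{\rm seq}$; this is why one works inside the closure of the \emph{strict} inequality set.
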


Since the topological pressure depends continuously on the potential but is not continuous as a function of the shift space, the main challenge in the proof of Statement (ii) of Theorem A is to show that the sequence of entropies or pressures for $X_m$  converges to that of $X$.  In the proof of Statement (iii) of Theorem A, we show that if the potential $\phi$ does not belong to $FSP(X)$, then the pressure of  the potential restricted to the subshifts $X_m$ does not converge to the pressure of $\phi$ on $X$.

We note that the computability of the topological pressure on $FSSP(X)$ is a stronger statement than the computability of the pressure at each potential in $FSSP(X)$. Indeed, the latter means that for 
any potential $\phi\in FSSP(X)$, there exists a Turing machine (possibly depending on $\phi$) which computes $\Ptop(\phi)$, while the former states that the existence of {\em one} Turing machine which uniformly computes  $\Ptop(\phi)$ for any $\phi \in FSSP(X)$.
We provide  a general criterion for $FSSP(X)=C(X,\bR)$  in Proposition \ref{proptriv},  and we show that single and generalized gap shifts and a class of Beta-shifts satisfy this criterion.  
In particular, for these shift spaces, the topological pressure is computable on the entire set of continuous potentials.

We note that even though Theorem A is stated for two-sided shift spaces, it can be extended to one-sided shift spaces $X^+$ when $X$ and $X^+$ have the same language.  For example, if $X^+$ is given by infinite paths in an irreducible countable directed labeled graph, then it is possible to construct a two-sided coded shift $X$ with the same language as $X^+$.  It then follows that the computability of the pressure on $X$ implies the computability of the pressure on $X^+$.  We apply this construction in Section 7 to establish the computability of the pressure for particular Beta-shifts.

We remark that \cite{BPR21} includes a construction for a unique representation of a coded shift.  We do not use this construction in the present paper since the sequential pressure properties of Definitions \ref{def:FSP} and \ref{def:FSSP} are not obvious for this generating set.  Instead, we construct specific unique representations for our examples in Section 7 for which the sequential pressure properties can be identified.

In Section \ref{sec:concl}, we construct examples of coded shifts with an open set of potentials for which the full sequential pressure property fails and the pressure is not computable.  Theorem A immediately leads to the following corollary for the computability of the entropy (see \cite{GHRS:Computability} for related results):

\begin{corollary}
Let $X$ be a coded shift with unique representation $\cG$.  Moreover, suppose that the language and generating set $\cG$ of $X$ are given by oracles.  If the zero potential on $X$ is in $FSSP(X)$, then the entropy of $X$ is computable.
\end{corollary}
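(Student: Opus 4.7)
The plan is to derive this corollary as an immediate specialization of Theorem A(ii) to the zero potential, together with the identity between topological entropy and pressure at the zero potential. First I would recall that $h_{\rm top}(X)=\Ptop(X,\mathbf{0})$, where $\mathbf{0}\in C(X,\bR)$ denotes the constant-zero potential; this reduces the computability of the entropy of $X$ to the computability of the pressure at $\mathbf{0}$. The constant function $\mathbf{0}$ is trivially a computable element of $C(X,\bR)$, since a Turing machine can output its value at every cylinder to any requested precision using no input beyond the desired accuracy, and is therefore a valid input to any algorithm acting on potentials.

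By hypothesis, $\mathbf{0}\in FSSP(X)$, and the language and generating set $\cG$ of $X$ are provided by oracles. Theorem A(ii) then supplies a universal Turing machine which, when run on these oracles and on the input $\mathbf{0}$, outputs an approximation to $\Ptop(X,\mathbf{0})$ of arbitrarily prescribed precision. Combining this with the identity $h_{\rm top}(X)=\Ptop(X,\mathbf{0})$ yields a Turing machine which approximates $h_{\rm top}(X)$ to any precision, which is exactly the computability statement claimed. There is essentially no new obstacle here; the only routine check is that the constant potential $\mathbf{0}$ qualifies as a computable input in the framework used to state Theorem A(ii), and this is immediate from the definition of computability for continuous functions on a shift space.
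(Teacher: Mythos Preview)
Your proposal is correct and matches the paper's approach: the paper presents this corollary as an immediate consequence of Theorem A, and your argument---specializing Theorem A(ii) to the computable zero potential and invoking $h_{\rm top}(X)=\Ptop(X,\mathbf{0})$---is exactly that specialization.
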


We next prove that if the coded shift $X$ is replaced by a general shift space given by its language, the topological pressure is no longer computable in most cases.  We say that $X$ is a shift space if it is a closed (shift-)invariant subset of $ \Sigma$.  Let the set of all shift spaces (not merely coded shifts) that are contained in the bi-infinite full shift  $\Sigma$ be denoted by $\Sigma_{\rm invariant}$.  The main differences between Theorems A and B are that the set of shift spaces is enlarged and the given information is reduced, since general shifts are not coded.  Both of these extensions reduce the extent of the computability.

\begin{thmB} 
Let $X_0\in \Sigma_{\rm invariant}$  and $\phi_0\in C(\Sigma,\bR)$. Suppose that all equilibrium states of $\phi_0\vert_{X_0}$ have non-zero entropy, then the generalized pressure function $$P: \ClosedShift\times C(\Sigma, \mathbb{R})\rightarrow\mathbb{R},\quad (X, \phi)\mapsto  \Ptop(X,\phi\vert_X)$$ is not computable at $(X_0,\phi_0)$.
\end{thmB}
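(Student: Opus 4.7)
The plan is to deduce non-computability at $(X_0,\phi_0)$ by producing a computable sequence along which the pressure fails to approach $\Ptop(X_0,\phi_0|_{X_0})$ from below. Since Lemma~\ref{lem:pres_upper} (and its extension allowing the shift space to vary) gives that $P$ is upper semi-computable from the language of $X$ and the potential, non-computability reduces to the failure of lower semi-continuity: I will construct a sequence $Y_N\to X_0$ in $\ClosedShift$ in the Hausdorff topology, with each $Y_N$ effectively computable from queries to the language of $X_0$, such that $\limsup_N \Ptop(Y_N,\phi_0|_{Y_N})<\Ptop(X_0,\phi_0|_{X_0})$.

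First, I would distill a ``spectral gap'' from the positive-entropy hypothesis on equilibrium states. Set $Q\eqdef\sup_{\mu\in\cM_{X_0}}\int\phi_0\,d\mu$. By weak-$*$ compactness of $\cM_{X_0}$ and continuity of $\mu\mapsto\int\phi_0\,d\mu$, the supremum is attained at some $\mu^\star$. If $Q=\Ptop(X_0,\phi_0|_{X_0})$ held, the variational principle would force $h_{\mu^\star}=0$ and make $\mu^\star$ an equilibrium state of zero entropy, contradicting the hypothesis. Therefore $\delta\eqdef \Ptop(X_0,\phi_0|_{X_0})-Q>0$.

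Next, I would construct zero-entropy approximants. Let $Z_N$ denote the SFT whose length-$(N+1)$ allowed words are precisely those in the length-$(N+1)$ part of the language of $X_0$, so that $Z_N\supseteq X_0$ and the length-$k$ parts of the languages of $Z_N$ and $X_0$ coincide for $k\le N+1$. Viewing $Z_N$ through its higher block presentation as a finite directed labeled graph, every edge lies in some bi-infinite path in $Z_N$; any such path can in turn be replaced by one that is eventually periodic in both directions (the forward and backward tails must enter non-trivial strongly connected components, which exist because $\emptyset\ne X_0\subseteq Z_N$). Let $Y_N$ be the finite union, one eventually-periodic path per edge, of their orbit closures in $Z_N$. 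Each orbit closure consists of the countable orbit of an eventually-periodic point together with its two attracting periodic orbits, so its length-$k$ subword complexity is linear in $k$ and its topological entropy vanishes; a finite union preserves this, giving $h_{\rm top}(Y_N)=0$. Moreover the length-$k$ language of $Y_N$ agrees with that of $X_0$ for $k\le N+1$, so $Y_N\to X_0$ in Hausdorff, and the entire construction is effective from an oracle for the language of $X_0$.

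A soft weak-$*$ argument closes the loop. Since $h_{\rm top}(Y_N)=0$, the variational principle gives $\Ptop(Y_N,\phi_0|_{Y_N})=\sup_{\mu\in\cM_{Y_N}}\int\phi_0\,d\mu$, attained at some $\mu^\star_N$. Passing to a weak-$*$ limit $\mu^\star_{N_k}\to\nu^\star$, the Hausdorff convergence $Y_N\to X_0$ forces $\nu^\star\in\cM_{X_0}$, and hence
\[
\limsup_N \Ptop(Y_N,\phi_0|_{Y_N})\le \int\phi_0\,d\nu^\star \le Q = \Ptop(X_0,\phi_0|_{X_0})-\delta.
\]
Any Turing machine claiming to compute $P$ at $(X_0,\phi_0)$ to precision $\delta/3$ produces its output after finitely many language queries of some bounded depth $M$; feeding it the language of $Y_N$ for $N\ge M$ (which agrees with that of $X_0$ on words of length $\le M$) returns the same value, which would have to lie simultaneously within $\delta/3$ of both $\Ptop(X_0,\phi_0|_{X_0})$ and $\Ptop(Y_N,\phi_0|_{Y_N})\le Q+\delta/3$, a contradiction for $N$ large. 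The main obstacle I anticipate is the third paragraph: for an arbitrary, possibly non-irreducible $X_0$ one must verify that the bridging eventually-periodic paths always exist inside $Z_N$ and control the subword-complexity accounting on the finite union tightly enough to guarantee exactly zero topological entropy rather than merely small positive entropy.
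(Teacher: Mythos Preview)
Your proposal is correct and reaches the same conclusion---discontinuity of $P$ at $(X_0,\phi_0)$ along a sequence $Y_N\to X_0$ with pressure bounded away from $\Ptop(X_0,\phi_0|_{X_0})$---but the mechanism you use to bound the pressures is genuinely different from the paper's.

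The paper fixes $\varepsilon$ with $\Ptop(X_0,\phi_0)-b_{\phi_0|_{X_0}}>5\varepsilon$ and, via its Lemma~\ref{lem:smallintegral}, picks in each transitive component of the approximating SFT $X_n$ \emph{that meets $X_0$} a periodic point whose periodic-orbit measure has integral below $b_{\phi_0|_{X_0}}+4\varepsilon$. The nonwandering set of the resulting $Z_n$ is this finite collection of periodic orbits, so the paper obtains the \emph{uniform} bound $\Ptop(Z_n,\phi_0)<b_{\phi_0|_{X_0}}+4\varepsilon$ for every $n$. By contrast, you take arbitrary eventually-periodic bi-infinite paths in $Z_N$ (one per length-$(N{+}1)$ word), note $h_{\rm top}(Y_N)=0$, and then extract the bound $\limsup_N \Ptop(Y_N,\phi_0)\le Q$ by a weak-$*$ compactness argument: the maximizing measures $\mu_N^\star$ accumulate on $\cM_{X_0}$ because $Y_N\to X_0$ in Hausdorff. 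Your route is softer and bypasses the paper's Lemma~\ref{lem:smallintegral} entirely; the price is that you get only an asymptotic bound rather than one valid for all $N$, but this suffices for the discontinuity argument. The paper's route is more explicit and quantitative, and it singles out the components of $X_n$ intersecting $X_0$, which is exactly what makes the integral bound work without appealing to compactness of $\cM_\Sigma$.

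Two small remarks on your write-up. First, the effectiveness of the construction of $Y_N$ is irrelevant to the non-computability conclusion: all you need is the \emph{existence} of $Y_N$ with the stated language and pressure properties, since the contradiction comes from feeding the putative Turing machine an oracle for $Y_N$ that agrees with the oracle for $X_0$ up to the queried depth. Second, your anticipated obstacle is not one: in a finite directed graph any infinite forward (or backward) path eventually remains in a single strongly connected component containing a cycle, so the eventually-periodic replacement always exists; and a finite union of orbit closures of eventually-periodic points is a countable subshift, hence has zero topological entropy---no delicate complexity bookkeeping is needed.
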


In Theorem B, an oracle of a shift space is a function that lists the language of $X$ in order of nondecreasing length.  This is in contrast with Theorem A, where oracles for both the language and the generating set are given as input.
The challenge in the proof of Theorem B is to identify shift spaces $Y_n$ which converge to $X_0$, but whose pressures $\Ptop(Y_n,\phi_0\vert_{Y_n})$ do not converge to $\Ptop(X_0,\phi_0\vert_{X_0})$. The key idea is to construct spaces $Y_n$ such that the nonwandering set of each $Y_n$ is a finite union of appropriately selected periodic orbits.  
 
We remark that the entropy assumption in Theorem B is equivalent to the condition that
\begin{equation}\label{eqposent}
\Ptop(X_0,\phi_0\vert_{X_0})>\sup_{\mu\in \mathcal{M}_{X_0}}\int \phi_0\, d\mu,
\end{equation}
see Lemma \ref{lem:posentropy}.  Characterizations of shifts which satisfy Inequality \eqref{eqposent} have been extensively studied in \cite{Climenhaga2019} where the potentials that satisfy this inequality are called {\em hyperbolic}.  This inequality 
implies that, for any positive entropy shift space $X_0$, the generalized pressure function is not computable for a nonempty  open set of continuous potentials.

The assumption that $\phi_0\vert_{X_0}$ has equilibrium states with non-zero entropy holds for many shift spaces and potentials.  For example, Theorem B implies the non-computability of the pressure for an open and dense set of continuous potentials on shift spaces with specification (including SFTs and Sofic shifts). More precisely, for shift spaces with specification, Inequality (\ref{eqposent}) holds for any H\"{o}lder continuous function \cite{Climenhaga}. Furthermore, for any positive entropy shift space and any potential $\phi_0$, there exists $-\infty\leq t_{\rm min}<0<t_{\rm max} \leq\infty$ such that Theorem B holds for all potentials $t\phi_0\vert_{X_0}$ with $t\in (t_{\rm min},t_{\rm max})$. In many cases, $t_{\rm min}=-\infty$ and $t_{\rm max}=\infty$. We note that the case $t_{\rm max}<\infty$ corresponds to a zero-entropy phase transition, i.e., when $t_{\rm max} \phi_0\vert_{X_0}$ reaches a zero-entropy maximizing measure.  The   analogous statement holds in the case $t_{\rm min}>-\infty$. 

We observe that the converse of Theorem B is not true. Indeed, in Example \ref{counter} of  Section \ref{sec:concl}, we give   an example of a shift space and potential with zero-entropy equilibrium states for which the pressure is not computable.

Finally, we obtain the following corollary for the computability of the topological entropy by applying Theorem B to the potential $\phi_0=0$:
\begin{corollary}\label{cor4}
The generalized entropy map $X\mapsto \htop(X)$ is computable at a nonempty shift space $X_0$ if and only if $X_0$ has zero entropy.  Moreover, there is one Turing machine that uniformly computes the topological entropy at all shift spaces with zero entropy.
\end{corollary}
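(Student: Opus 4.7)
The plan is to establish each direction of the equivalence separately and then to read off the uniformity directly from the algorithm used in the easy direction.

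For the ``only if'' direction, I would argue by contrapositive. Suppose $X_0$ has positive topological entropy, so that $\htop(X_0) > 0$. The plan is to apply Theorem B with the zero potential $\phi_0 \equiv 0 \in C(\Sigma,\bR)$. Equilibrium states of $0|_{X_0}$ are precisely the measures of maximal entropy on $X_0$; such measures exist because entropy is upper semi-continuous on compact shift spaces, and each has entropy equal to $\htop(X_0) > 0$. Hence the hypothesis of Theorem B is satisfied, and the generalized pressure function is not computable at $(X_0, 0)$. Now if the generalized entropy map $X \mapsto \htop(X) = P(X,0)$ were computable at $X_0$, one could construct a Turing machine computing $P$ at $(X_0,0)$ by feeding the shift oracle into the hypothesized entropy machine while discarding the (trivially computable) potential oracle for $0$. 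This contradiction forces $\htop$ to be non-computable at $X_0$.

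For the ``if'' direction, suppose $\htop(X_0) = 0$. By Lemma \ref{lem:pres_upper} applied to the zero potential, $\htop$ is upper semi-computable from the language: given any oracle listing the language of a shift space $X$, one effectively produces a sequence of rationals $h_n$ with $h_n \searrow \htop(X)$. Since every nonempty shift space satisfies $\htop(X) \geq 0$, the following Turing machine computes $\htop(X)$ whenever $\htop(X) = 0$: on input $\epsilon > 0$, compute $h_1, h_2, \ldots$ in turn, halt at the first index $n$ with $h_n < \epsilon$, and output $0$. If $\htop(X) = 0$ then $h_n \to 0$ ensures termination, and the output satisfies $|0 - \htop(X)| = \htop(X) \leq h_n < \epsilon$. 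Since this algorithm references no data specific to $X_0$, the same Turing machine works uniformly at every zero-entropy shift space, establishing the uniformity claim.

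The main obstacle is packaged entirely into Theorem B: the substantive work lies in the non-computability direction, where one must construct shift spaces $Y_n$ converging to $X_0$ whose entropies fail to converge to $\htop(X_0)$. The computability direction is then a short observation: an upper semi-computable quantity that coincides with a computable lower bound (here, $0$) is automatically computable, and the argument is inherently uniform in the shift. For shifts with positive entropy no computable lower bound is available from the language alone, and this is precisely the obstruction exhibited by the proof of Theorem B.
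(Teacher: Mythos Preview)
Your proposal is correct and follows essentially the same route as the paper: Theorem~B with $\phi_0\equiv 0$ gives the ``only if'' direction (equilibrium states of the zero potential are measures of maximal entropy, hence have positive entropy), and upper semi-computability of $\htop$ together with the trivial lower bound $\htop\ge 0$ gives the ``if'' direction, with uniformity immediate from the form of the algorithm.

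One small point worth tightening: the paper's notion of ``computable at $X_0$'' is a local one (an effective analogue of continuity at a point), so the machine must be correct not merely on oracles for $X_0$ itself but on oracles for \emph{any} shift $X$ that agrees with $X_0$ up to the queried precision. The paper's proof makes this explicit: once the machine has read the language up to some length $\ell$ and found the upper bound below $2^{-n}$, that same quantity bounds $\htop(X)$ for every $X$ with $\cL(X,\ell)=\cL(X_0,\ell)$, and hence the output is within $2^{-n}$ of $\htop(X)$ as well. Your algorithm satisfies this too (since your $h_n$ depends only on the queried portion of the language and upper-bounds $\htop(X)$ for any such $X$, so $|0-\htop(X)|\le h_n<\epsilon$), but you only state the verification for zero-entropy $X$. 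Adding that one sentence closes the gap between ``the machine gives the right answer at every zero-entropy shift'' and ``the entropy map is computable at $X_0$ in the sense used here''.
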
 

We remark that Corollary 5 uses the notion of computability at a point, see Section 3, and states more than that 0 is a computable number.

\subsection*{Outline} 
We review the dynamics on shift spaces in Section~\ref{sec:rev:shift}.  In Section~\ref{sec:rev:comput}, we review the basics of computability theory and begin its application to shift spaces.  We apply computability theory to SFTs and Sofic shifts in Section~\ref{sec:comput_fin} in preparation for our main theorems.  Theorems A and B are proved in Sections \ref{sec:proof:com} and \ref{sec:proof:counterexample}, respectively.  In Section~\ref{sec:applications}, we apply our main theorems to many  examples of classes of coded shifts, such as S-gap shifts, generalized S-gap shifts, a class of Beta-shifts, and Sofic shifts.  We end with some concluding remarks and open problems in Section~\ref{sec:concl}.

%-_-_-_-_-_-_-_-_-_-_-_-_-_-_-_-_-_-_-_-_-_-_-_-_-_-_-_-_-_-_-_-_-_-_-_-_-_-_-_-_-_-_-_-_-_-_-_-_-_-_-_-_-_-_-_-_-_-_-_-_-_-_-_-_
 \section{Preliminaries on the dynamics of shift spaces.} \label{sec:rev:shift}
 
We introduce the relevant background material from the theory of  symbolic dynamics  and the thermodynamic formalism.  

% ----------------------------------------------------------------------------------------------------------------------------------------------------------------------
\subsection{Symbolic spaces}
We review some relevant material from symbolic dynamics, and we refer the reader to \cite{Kit,MarcusLind1995} for details.
Let $\Sigma=\Sigma^{\pm}_d$ denote the shift space of bi-infinite sequences $(x_k)_{k\in \bZ}$ in the finite alphabet $\cA=\cA_d=\{0,\dots,d-1\}$. Endowing $\Sigma$ with the {\em Tychonov product topology} makes $\Sigma$ into a compact  and metrizable topological space. In fact, for every  $0<\theta<1$, the metric given by
\begin{equation}\label{defmetX}
d(x,y)=d_\theta(x,y)\eqdef  \theta^{\min\{|k| \;:\;  x_k\neq y_k\}} 
\end{equation}
induces the Tychonov product topology on $\Sigma$.
The  {\em (left) shift map} $f:\Sigma \to \Sigma $ is defined by $f(x)_k=x_{k+1}$, and we note that $f$ is a homeomorphism. We call a closed shift-invariant set $X\subset \Sigma$ a {\em shift space}, and  we say that $f:X\to X$ is a {\em subshift}.

For $n\in\bN$, $\tau=\tau_0\tau_1\cdots\tau_{n-1}\in\cA^n$, and $i\in\bZ$, we define the {\em cylinder generated by $\tau$ starting at $i$} to be the set 
$$
[\tau]_i=\{x\in X:x_{i+j}=\tau_j\text{ for }j\in\{0,\dots,n-1\}\}.
$$
When $i=0$, we write $[\tau]\eqdef[\tau]_0$.  Similarly, for $x\in X$ and $i,j\in\bZ$ with $i\leq j$, we write $x[i,j]\eqdef x_i\cdots x_{j}$ for the {\em substring} of $x$ from $i$ to $j$.  In addition, we call $[x]_i^j\eqdef [x[i,j]]_i$ the {\em cylinder of length $j-i+1$ starting at $i$ generated by $x$}.  When $i=0$, we write $[x]^j$ for $[x]_0^j$.

%...................................................................................................................................................................................................................................................

For $n\geq 1$, we say that the $n$-tuple $\tau=\tau_0\tau_1\cdots \tau_{n-1}$ of elements in $\cA$ is  an {\em $X$-admissible word} provided $\tau$  occurs as a substring of one of the elements of $X$. The number $n$ is called the {\em length} of $\tau$ and is denoted by $|\tau|$.  We denote  the set of $X$-admissible words of length $n$ by $\cL(X,n)$.  We  call $\cL(X)\eqdef \bigcup_{n=0}^\infty \cL(X,n)$ the {\em language of $X$}. Given words $\tau$ and $\eta$ of lengths $n$ and $m$, respectively,
we denote  the word of length $n+m$ obtained by concatenating $\tau$ and $\eta$ by $\tau\eta$. 
Furthermore, we call  \[\Or(\tau)=\cdots \tau_0\cdots \tau_{n-1}.\tau_0\cdots \tau_{n-1}\tau_0\cdots \tau_{n-1}\cdots \in \Sigma\]  the {\em periodic point}   {\em generated} by $\tau$ (of {\em period} $n$). 
Here we use the notation of the ``decimal point" to separate the coordinates with nonnegative indices from those with negative indices.
We denote the set of all {\em periodic points} of $f:X\to X$ with {\em prime period} $n$ by $\Per_n(f)$, i.e., $n$ is the smallest positive integer such that $f^n(x)=x$. Moreover, $\Per(f)\eqdef \bigcup_{n\geq1} \Per_n(f)$ denotes the set of all periodic points of $f$. If $n=1$, then we say that $x$ is a {\em fixed point} of $f$. 
For $x\in \Per_n(f)$, we call $\tau_x=x_0\cdots x_{n-1}$ the {\em generating segment} of  $x$, that is, $x=\Or(\tau_x)$.

For $\phi\in C(X,\bR)$ and  $k\in\bN$, we define the {\em variation} of $\phi$ over cylinders of length $2k+1$ by
\[ \var_k(\phi)=\sup\{|\phi(x)-\phi(y)|: x_{-k}=y_{-k},\dots, x_{k}=y_{k}\}. \]
We denote  the set of all H\"older continuous functions $\phi\in C(X,\bR)$ with respect to the metric $d_\theta$ by $\cF_X$, i.e., $\cF_X$ consists of those functions such that there exists $C>0$ and $0<\alpha<1$ such that $\var_k(\phi)\leq C \alpha^k$ for all $k\in\bN$.

% ----------------------------------------------------------------------------------------------------------------------------------------------------------------------
\subsection{Topological pressure and the variational principle}\label{sec:pressure+variation}
We review basic definitions from the thermodynamic formalism, see \cite{MauldinEtAl2003,Walters1973} for a more detailed account.
Let $\cM=\cM_X$ denote the set of all $f$-invariant Borel probability measures on $X$ endowed with the weak$^\ast$ topology, and let $\cM_{\rm erg}=\cM_{{\rm erg},X}\subset \cM$ be the subset of ergodic measures. We recall that $\cM$ is a convex metrizable topological space.  For $x\in \Per_n(f)$,  the {\em periodic point measure} of $x$ is given by $\mu_x=\frac{1}{n}(\delta_x+\dots +\delta_{f^{n-1}(x)}), $ where  $\delta_y$ denotes the  Dirac measure on $y$. We write $\cM_{\rm per}=\cM_{{\rm per},X}=\{\mu_x: x\in \Per(f)\}$, and we observe that $\cM_{\rm per}\subset \cM_{\rm erg}$.

Given $\mu\in \cM$,  the \emph{measure-theoretic entropy} of $\mu$  is given by
 \begin{equation*}%\label{eqn:def:meas_ent}
 h_\mu(f)=\lim_{n\to\infty}- \frac{1}{n}\sum_{\tau\in \cL(X,n)} \mu([\tau]^{n-1})\log (\mu([\tau]^{n-1})),
 \end{equation*}
 omitting terms with  $\mu([\tau]^{n-1})=0$.

 Let $f:X\to X$ be a subshift and $\phi\in C(X,\bR)$.  For $n\geq 1$, we define the {\em $n$-th partition function} $Z_n(\phi)$ at $\phi$ by
\begin{equation}\label{defnpart}
Z_n(\phi)=\sum_{\tau\in \cL(X,n)}\exp\left( \sup_{x\in [\tau]} S_n\phi (x) \right), 
\end{equation}
where  
\begin{equation}\label{eq:Sn}
S_n \phi(x) \eqdef \sum_{k=0}^{n-1} \phi(f^k(x)).
\end{equation}
We observe that the sequence $\left(\log Z_n(\phi)\right)_{n\geq 1}$ is subadditive, see, e.g., \cite{MauldinEtAl2003}. The \emph{topological pressure} of $\phi$ with respect to the shift map $f:X\to X$ is defined by 
\begin{equation} \label{eqn:def:P}
\Ptop(\phi)= \lim_{n\to \infty} \frac{1}{n} \log Z_n(\phi)=\inf \left\{\frac{1}{n} \log Z_n(\phi): n\geq 1\right\}.
\end{equation}
Moreover, $\htop(f)=\Ptop(0)$ denotes the {\em topological entropy} of $f$. If $Y\subset X$ is a subspace, we write $\Ptop(Y,\phi)$ for $\Ptop(f\vert_Y,\phi\vert_Y)$ and $\htop(Y)$ for $\htop(f\vert_Y)$.

 For $\phi\in C(X,\bR)$, we write $\mu(\phi)=\int \phi\, d\mu$. The quantity $P_\mu(\phi)=h_\mu(f)+\mu(\phi)$ is called the \emph{free energy} of $\mu$. The topological pressure satisfies the well-known \emph{variational principle}, see, e.g., \cite{Walters1973},

 \begin{equation}\label{varpri}
 P_{\rm top}(\phi)=\sup_{\mu\in \cM} h_\mu(f)+\mu(\phi).
 \end{equation}
The supremum on the right-hand side of Equation~\eqref{varpri} remains unchanged if $\cM$ is replaced by $\cM_{\rm erg}$.
If $\mu\in\cM$ achieves the supremum in Equation~\eqref{varpri}, then  $\mu$ is called an \emph{equilibrium state} of $\phi$. The set of equilibrium states of $\phi$ is denoted by $\ES(\phi)$. We recall that, since $f$ is expansive, the entropy map $\mu\mapsto h_\mu(f)$ is upper semi-continuous, which, in turn, implies that $\ES(\phi)$ is nonempty  \cite{Walters1973}. Furthermore, $\ES(\phi)$ is a compact and convex subset of $\cM$ whose extremal points are the ergodic equilibrium states in $\ES(\phi)$.
 
% ----------------------------------------------------------------------------------------------------------------------------------------------------------------------

%-_-_-_-_-_-_-_-_-_-_-_-_-_-_-_-_-_-_-_-_-_-_-_-_-_-_-_-_-_-_-_-_-_-_-_-_-_-_-_-_-_-_-_-_-_-_-_-_-_-_-_-_-_-_-_-_-_-_-_-_-_-_-_-_
\section{Preliminaries on computability} \label{sec:rev:comput}

Computability theory identifies which quantities can and cannot be computed on a real-world computer.  
We model a computer as a Turing machine and say that a mathematical object (e.g., a point, set, or function)\ 
is computable if there exists a Turing machine which can approximate the object up to any desired precision.  A main idea of computability theory is that mathematical objects are replaced by sequences of approximations.  For our purposes, a Turing machine can be represented by an algorithm (using a bit-based model of computation), see, e.g., \cite{K}.

We review the basic definitions and results from computability theory.  For a more thorough introduction, see \cite{BBRY,BHW, BY,BSW,GHR,RW,RYSurvey,K}.  We use different, but closely related, definitions to those in \cite{BY} and \cite{GHR}, see also \cite{BSW}. 

% ----------------------------------------------------------------------------------------------------------------------------------------------------------------------
\subsection{Computability theory for real numbers}

We present the basic definitions for the computability theory for real numbers.  Since many of our results use this theory, we review the details in these cases to make the later proofs clearer.  In computability theory, a real number is defined by a convergent sequence of rational numbers.

\begin{definition}
An {\em oracle approximation} of a real number $x\in\mathbb{R}$ is a function $\psi$ such that on input $n\in\mathbb{N}$, $\psi(n)\in\mathbb{Q}$ such that $|x-\psi(n)|<2^{-n}$.  A real number $x$ is said to be {\em computable} if there exists a Turing machine which is an oracle for $x$. 
\end{definition}

Loosely speaking, a computable number is one that can be algorithmically approximated to any requested precision.  We observe that the definition of a computable number not only includes an approximation to $x$, but, also, an explicit error estimate on the quality of the approximation.  In some cases, we can only compute a one-sided approximation to $x$, in particular, we cannot compute an error estimate.

\begin{definition}
A real number $x\in\mathbb{R}$ is {\em upper semi-computable} if there is a Turing machine $\psi$ such that
 the sequence $(\psi(n))_{n\in\bN}\in\bQ^\bN$ is non-increasing and converges to $x$.
Similarly, $x$ is {\em lower semi-computable} if $-x$ is upper semi-computable.
\end{definition}

We note that computability for a real number is equivalent to simultaneous upper and lower semi-computability.  
Computability theory extends beyond real numbers and can be adapted to describe other mathematical objects, such as functions. 

\begin{definition} 
Let $S\subset\mathbb{R}$.  A function $g:S\rightarrow\mathbb{R}$ is {\em computable} if there is a Turing machine $\chi$ so that, for any $x\in S$ and oracle $\psi$ for $x$, $\chi(\psi,n)$ is a rational number so that $|\chi(\psi,n)-g(x)|<2^{-n}$.  If, instead, there is a Turing machine $\chi$ such that for every $x$ and oracle $\psi$ for $x$, the sequence $(\chi(\psi,n))_{n\in\bN}$ is nonincreasing and converges to $g(x)$, then $g$ is called {\em upper semi-computable}.  A function $g$ is {\em lower semi-computable} if $-g$ is upper semi-computable.
\end{definition}

The conditions on $g$ to be computable or upper semi-computable imply that, for any fixed $\psi$, the function $n\mapsto\chi(\psi,n)$ is an oracle approximation or an approximation from above, respectively, of $g(x)$.
We also note that computable functions are continuous.  In particular, if the Turing machine $\chi$ queries $\psi$ up to precision $k$ when 
computing $\chi(\psi,n)$, then any $x'$ sufficiently close to $x$ has an oracle $\psi'$ that agrees with $\psi$ up to precision $k$. This implies that $\chi(\psi,n)=\chi(\psi',n)$ and $|g(x)-g(x')|<2^{-n+1}$.
Our main approach to show that a function is not computable is to show that it is not continuous, 
but we note that the converse is false, i.e., continuity does not imply computability.
 
% ----------------------------------------------------------------------------------------------------------------------------------------------------------------------
\subsection{Computable metric spaces}

We provide the abstract theory for computable metric spaces for which the computability theory for real numbers is a special case.  We provide the minimal theory needed and refer the interested reader to \cite{BY,BSW,GHR} for more details and additional discussions.

\begin{definition}
Suppose that $(X,d_X)$ is a separable metric space and that $\mathcal{S}_X=(s_i)_{i\in\bN}$ is a dense sequence of points in $X$.  We say that $(X,d_X,\mathcal{S}_X)$ is a {\em computable metric space} if there is a Turing machine $\chi:\mathbb{N}^2\times\mathbb{Z}\rightarrow\bQ$ such that $|\chi(i,j,n)-d_X(s_i,s_j)|<2^{-n}$.
\end{definition}

Equivalent definitions for a computable metric space are to require either the function $(i,j)\mapsto d_X(s_i,s_j)$ to be a computable function or the distances between $s_i$ and $s_j$ to be uniformly computable in $i$ and $j$.  As an example, the triple $(\bR,d_{\bR},(q_i)_{i\in\bN})$ where $\bQ=\cup\{q_i\}$ is a computable metric space.  In this more abstract setting, we  generalize the concepts of oracles for  numbers and computable functions.

\begin{definition}
Suppose that $(X,d_X,\mathcal{S}_X)$ is a computable metric space.  An {\em oracle approximation} of $x\in X$ is a function $\psi$ such that on input $n$, $\psi(n)\in\bN$ such that $d_X(x,s_{\psi(n)})<2^{-n}$.  Moreover, $x$ is {\em computable} if there exists a Turing machine which is an oracle for $x$.
\end{definition}

The definitions for a computable function $f:S\rightarrow\bR$ for $S\subset X$ as well as upper and lower semi-computable functions are analogous to the definitions in the real case and we leave the details to the reader.  Many properties of interest, such as continuity, are retained in this definition.  We present an equivalent form of computability of a function that proves to be a useful tool in our proofs.  Although this result is well-known to experts, we include the proof as it is less common in the literature.  We break the result into the following lemma and proposition:

\begin{lemma}\label{lem:simulate}
Let $(X,d_X,\mathcal{S}_X)$ be a computable metric space and suppose that $S\subset X$.  In addition, suppose that the function $f:S\rightarrow\bR$ is computable with oracle Turing machine $\chi$.  There is a Turing machine $\phi$ such that on input $(i,k)\in\bN^2$, the output of the Turing machine is a (possibly infinite) sequence of pairs $(a_j,r_j)\in\bQ^2$ where $r_j>0$ such that for all $x\in B(s_i,r_j)$, $|f(x)-a_j|<2^{-k}$.  Moreover, for any oracle $\psi$ of $s_i$, $\chi(\psi,k)$ appears as some $a_j$ in this sequence.
\end{lemma}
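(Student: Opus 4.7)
The plan is to build $\phi$ by simulating $\chi$ on all possible finite ``query transcripts'' rather than on specific oracles. When $\chi$ computes $\chi(\psi,k)$ for some oracle $\psi$ of a point $x\in S$, it makes only finitely many queries to $\psi$; each query at precision $n\in\bN$ returns some index $m\in\bN$ with $d_X(x,s_m)<2^{-n}$. I record this interaction as a transcript $T=((n_1,m_1),\dots,(n_\ell,m_\ell))$. The crucial observation is that the output of $\chi$ depends only on $T$, not on $\psi$ itself: any two oracles producing the same transcript yield the same output.

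The algorithm for $\phi(i,k)$ dovetails over all triples $(T,t,r)$, where $T$ ranges over finite transcripts in $(\bN\times\bN)^\ast$, $t\in\bN$ is a step bound, and $r\in\bQ_{>0}$. For each triple, I first simulate $\chi$ for at most $t$ steps on precision $k$ using the ``fake'' oracle that answers the $j$-th query with $m_j$ (aborting this triple if $\chi$ asks for precision different from $n_j$ on its $j$-th query or makes more than $\ell$ queries). If the simulation halts with output $a\in\bQ$, then, using the assumed computability of the distance function on $\mathcal{S}_X\times\mathcal{S}_X$, I compute rational upper approximations of each $d_X(s_i,s_{m_j})$ accurate enough to test whether $r+d_X(s_i,s_{m_j})<2^{-n_j}$ for every $j\in\{1,\dots,\ell\}$. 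If this test succeeds, output the pair $(a,r)$ and continue the enumeration.

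For correctness, fix an emitted pair $(a,r)$ with associated transcript $T$, and take any $x\in B(s_i,r)\cap S$. The triangle inequality gives $d_X(x,s_{m_j})\leq r+d_X(s_i,s_{m_j})<2^{-n_j}$, so by density of $\mathcal{S}_X$ I can define a valid oracle $\psi'$ for $x$ with $\psi'(n_j)=m_j$ for each $j$ (values at other precisions are filled in using density, without any need for computability). Feeding $\psi'$ to $\chi$ reproduces the transcript $T$, giving $\chi(\psi',k)=a$; since $\chi$ is a valid oracle machine for $f$, this forces $|f(x)-a|<2^{-k}$. For the \emph{moreover} clause, let $\psi$ be any oracle for $s_i$. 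Then $\chi(\psi,k)$ terminates producing some transcript $T_\psi$ with output $a_\psi=\chi(\psi,k)$; the strict inequalities $d_X(s_i,s_{\psi(n_j)})<2^{-n_j}$ for each queried precision allow me to pick $r\in\bQ_{>0}$ satisfying $r+d_X(s_i,s_{\psi(n_j)})<2^{-n_j}$ for all $j$, whence the triple $(T_\psi,t,r)$ is eventually certified for sufficiently large $t$ and $(a_\psi,r)$ is emitted.

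The main subtlety is that $\phi$ cannot decide \emph{a priori} which transcripts correspond to genuine oracles of points in $X$, because membership of $x$ in $\bigcap_j B(s_{m_j},2^{-n_j})$ is only semi-decidable from the metric data. Treating the radius $r$ as an extra enumeration parameter converts this into a semi-decidable certification: $\phi$ emits $(a,r)$ only after positively verifying the strict containment $B(s_i,r)\subset\bigcap_j B(s_{m_j},2^{-n_j})$, which ensures every output pair is correct while the strictness of the defining inequalities for a true oracle guarantees that every value $\chi(\psi,k)$ is eventually emitted.
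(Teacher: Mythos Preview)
Your approach is essentially the paper's: simulate $\chi$ on finite oracle data and use the slack in the strict inequalities $d_X(s_i,s_{m_j})<2^{-n_j}$ to extract a radius $r$ such that every point of $B(s_i,r)$ admits an oracle agreeing with that data. The paper packages the finite data as \emph{partial oracles}, i.e.\ arrays $(w_{-n},\dots,w_n)$ indexed by precision, and computes $r=\min_l(2^{-l}-d_l)$ directly from the slack rather than enumerating it as a third dovetailing parameter; these are cosmetic differences.

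One technical point to tighten: your transcript $T=((n_1,m_1),\dots,(n_\ell,m_\ell))$ is a sequence rather than a function of the precision, so nothing forbids $n_j=n_{j'}$ with $m_j\ne m_{j'}$. If $\chi$ happens to query the same precision twice, the simulation with such an inconsistent $T$ may halt with some value $a$, yet no genuine oracle $\psi'$ satisfies $\psi'(n_j)=m_j$ for all $j$, and your correctness step ``feeding $\psi'$ to $\chi$ reproduces the transcript $T$'' breaks down. The fix is trivial---either discard transcripts with repeated precisions and conflicting answers, or assume without loss of generality that $\chi$ caches oracle responses---and the paper's array representation sidesteps the issue automatically since an array is already a function on its index set.
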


\begin{proof}
We first construct a Turing machine to simulate all oracles for the point $s_i$.  In particular, we construct all $(2n+1)$-tuples of natural numbers $w=(w_{-n},\dots,w_n)$  such that for all $-n\leq l\leq n$, $|s_{w_{l}}-s_i|<2^{-{l}}$.  We call such a sequence a {\em partial oracle}.  We remark that for any oracle $\psi$ for $s_i$, $(\psi(-n),\dots,\psi(n))$ is a partial oracle, and, conversely, for any partial oracle $w$, there is an oracle $\psi$ for $s_i$ such that $w_{l}=\psi(l)$ for $-n\leq l\leq n$.

Every partial oracle can be constructed with the following procedure: For fixed $m\geq 0$, let $S_m=\{s_0,\dots,s_m\}$.  For each $0\leq {l}\leq m$, we compute an upper bound $d_l$ on $d_X(s_{l},s_i)$ with error at most $2^{-m}$.  For each sequence $w$ in $(S_m)^{2n+1}$, we accept $w$ if and only if $d_{w_l}<2^{-l}$ for all $-n\leq l\leq n$, i.e., $w$ forms a partial oracle.  By increasing $m$ and $n$, we construct all partial oracles.

We construct a new Turing machine to simulate $\chi(\psi,k)$ whose input is a partial oracle, but returns a failure if a precision outside the range of the partial oracle is requested by $\chi$.
This Turing machine uses a partial oracle $w$ in place of the oracle $\psi$ for $s_i$. Since a Turing machine only queries $\psi$ finitely many times, when the partial oracle is sufficiently large, we observe that the this simulation agrees with $\chi(\psi,k)$.

Finally, for any partial oracle $w$, we  construct a ball around $s_i$ of radius $r=\min_{|l|\leq n} (2^{-l}-d_l)$.  For every point in this ball, there exists an oracle $\psi'$ that agrees with $w$ on queries between $-n$ and $n$.  Therefore, if the simulated Turing machine for $f$ terminates on $w$, then the Turing machine also terminates on $\psi'$ and produces the same value.  Therefore, the estimate $a$ on $f(s_i)$ produced by $w$ satisfies $|f(x)-a|<2^{-k}$ in the ball $B(s_i,r)$.
\end{proof}

\begin{proposition}
Let $(X,d_X,\mathcal{S}_X)$ be a computable metric space and suppose that $S\subset X$.  A function $f:S\rightarrow\mathbb{R}$ is computable if and only if there exists a Turing machine $\chi$ such that, on input $(q,n)\in\bQ\times\bN$, $\chi(q,n)$ is a (possibly infinite) sequence of pairs of integers $(n_{q_j},m_{q_j})$ so that 
$$
f^{-1}(q-2^{-n},q+2^{-n})=\bigcup_j B\left(s_{n_{q_j}},2^{m_{q_j}}\right).
$$
\end{proposition}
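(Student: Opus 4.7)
My plan is to prove the two implications separately. The forward direction extends the ideas of Lemma \ref{lem:simulate} to enumerate dyadic balls whose union covers the preimage of an open interval, and the backward direction uses the enumerated balls as certificates that can be checked against the input oracle.

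For the forward direction, assume $f$ is computable via a Turing machine $\chi_f$, and fix $q\in\bQ$ and $n\in\bN$. The Turing machine $\chi(q,n)$ dovetails over all tuples $(i,k,j,m)\in\bN^3\times\bZ$, invokes Lemma \ref{lem:simulate} on $(i,k)$ to obtain its $j$-th pair $(a_j,r_j)$, and emits $(i,m)$ whenever $r_j>2^m$ and $|a_j-q|+2^{-k}\leq 2^{-n}$. The inclusion $B(s_i,2^m)\subset f^{-1}(q-2^{-n},q+2^{-n})$ for every emitted ball follows from $|f(y)-q|\leq|f(y)-a_j|+|a_j-q|<2^{-n}$ whenever $y\in B(s_i,r_j)$. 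For the converse inclusion, fix $x\in f^{-1}(q-2^{-n},q+2^{-n})$ and choose $k$ with $3\cdot 2^{-k}<2^{-n}-|f(x)-q|$; since $f$ is automatically continuous and $\mathcal{S}_X$ is dense, one finds $s_i$ with $|f(s_i)-f(x)|<2^{-k-1}$ and $d(s_i,x)$ arbitrarily small, and a suitable partial oracle for $s_i$ (extending one valid on a neighborhood of $x$) drives Lemma \ref{lem:simulate} to produce $(a_j,r_j)$ with $|a_j-f(s_i)|<2^{-k}$ and $r_j>d(s_i,x)$; a dyadic $m$ satisfying $d(s_i,x)<2^m<r_j$ then places $x$ in an enumerated $B(s_i,2^m)$.

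For the backward direction, assume $\chi$ satisfies the stated property. To approximate $f(x)$ within $2^{-n}$ from an oracle $\psi$ for $x$, dovetail over all rationals $q$, all pairs $(n_{q_j},m_{q_j})$ output by $\chi(q,n)$, and $k\in\bN$: for each triple, compute an estimate $e_k\in\bQ$ of $d(s_{n_{q_j}},s_{\psi(k)})$ with error at most $2^{-k}$, and check whether $e_k+2^{1-k}<2^{m_{q_j}}$. When this certificate holds, the triangle inequality yields $d(s_{n_{q_j}},x)<2^{m_{q_j}}$, hence $x\in B(s_{n_{q_j}},2^{m_{q_j}})\subset f^{-1}(q-2^{-n},q+2^{-n})$; output $q$, which satisfies $|f(x)-q|<2^{-n}$. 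Termination is guaranteed: some $q\in\bQ$ lies within $2^{-n-1}$ of $f(x)$, so $x$ belongs to the open set $f^{-1}(q-2^{-n},q+2^{-n})$ and therefore to one of the enumerated open balls $B(s_{n_{q_j}},2^{m_{q_j}})$, and openness forces the distance certificate to be verified at some finite $k$.

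The main obstacle I anticipate lies in the coverage step of the forward direction: Lemma \ref{lem:simulate} provides balls of arbitrary positive rational radius centered at points of $\mathcal{S}_X$, whereas the proposition requires radii of the form $2^{m_{q_j}}$, and one must ensure that the dyadic sub-balls still exhaust $f^{-1}(q-2^{-n},q+2^{-n})$. The resolution is that the radius $r_j$ delivered by Lemma \ref{lem:simulate} is controlled from below by the slack between $s_i$ and the boundary of the intersection $\bigcap_\ell B(s_{\psi(\ell)},2^{-\ell})$ cut out by the finitely many queries of $\chi_f$, so that choosing $s_i$ close enough to $x$ both shrinks $d(s_i,x)$ and leaves enough room in $r_j$ to insert a dyadic radius between them. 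A secondary subtlety in both directions is the bookkeeping of strict versus nonstrict inequalities, since the preimage of an open interval is open whereas finite algorithms certify closed conditions; this is handled uniformly by inserting buffers of order $2^{-k}$ or $2^{1-k}$ at the relevant comparisons.
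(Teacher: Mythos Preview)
Your proposal is correct and follows essentially the same strategy as the paper: the forward direction leverages Lemma~\ref{lem:simulate} to enumerate balls on which $f$ is trapped near a rational value, and the backward direction dovetails over candidate intervals and certifies membership of $x$ in one of the enumerated balls via the oracle. The only notable difference is that you explicitly handle the passage from the rational radii $r_j$ produced by Lemma~\ref{lem:simulate} to the required dyadic radii $2^{m_{q_j}}$ (by emitting $(i,m)$ only when $r_j>2^m$ and arguing that some dyadic radius fits between $d(s_i,x)$ and $r_j$), whereas the paper's proof does not address this shrinking step and simply outputs the balls $B(s_i,r)$ as produced; your extra care here is warranted and does not change the underlying argument.
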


\begin{proof}
Suppose that the conclusion is true and fix $x\in X$.  For any integer $n$, we show how to approximate $f(x)$ with error at most $2^{-n}$ as follows: We consider the collection of intervals $(m2^{-n}-2^{-n},m2^{-n}+2^{-n})$ for $m\in\bZ$, which form a countable cover of $\bR$.  For any fixed $k$, we  compute $k$ balls  in the preimages of those intervals with $|m|\leq k$.  For each such ball, we test if $x$ is in that ball by approximating the distance between $x$ and the center of the ball.  By increasing $k$ as well as the precision of this approximation, we eventually find a ball in the preimage of the interval for $m$ that contains $x$.  Therefore, the interval for $m$ contains $f(x)$ and $m2^{-n}$ is an approximation for $f(x)$ with error at most $2^{-n}$.

On the other hand, suppose that $f$ is computable.  Suppose that $y\in X$ with $f(y)\in(q-2^{-n},q+2^{-n})$.  We note that this containment can be detected by a Turing machine when $f(y)$ is is computed at high enough precision. Suppose that $y=s_i\in \cS_X$ and $\psi_i$ is an oracle for $s_i$.  We use the proof of Lemma \ref{lem:simulate} to construct a balls $B(s_i,r)$ from the oracle $\psi_i$ such that for all $x\in B(s_i,r)$, $f(x)\in (q-2^{-n},q+2^{-n})$.  The union of these balls for all $s_i$ with  $f(s_i)\in(q-2^{-n},q+2^{-n})$ is a subset of $f^{-1}(q-2^{-n},q+2^{-n})$, but, {\em a priori}, the containment may be proper.

We now show that these balls cover the preimage.  For any $x\in X$ with $f(x)\in(q-2^{-n},q+2^{-n})$, suppose that $\psi$ is an oracle for $x$.  Since the Turing machine for $f$ only queries $\psi$ finitely many times, there is an $s_i$ sufficiently close to $x$ such that there is an oracle $\psi_i$ that agrees with $\psi$ on the queries performed by the Turing machine.  Moreover, $s_i$ can be chosen sufficiently close to $x$ so that the disk around $s_i$, as constructed in Lemma \ref{lem:simulate}, is contains $x$ and is contained in $f^{-1}(q-2^{-n},q+2^{-n})$.  Thus, we achieve the desired set-equality.
\end{proof}

%...................................................................................................................................................................................................................................................
\subsection{Computability of subshifts}

We present the computability theory for subshifts.

\begin{definition}
Suppose that $X\in\ClosedShift$ with finite alphabet $\mathcal{A}_d$ and let $x\in X$.  An {\em oracle approximation} for $x$ is a function $\psi$ such that on input $n\in\bN$, $\psi(n)$ is the word $x[-n,n]$.  An {\em oracle approximation} for (the language of) $X$ is a function $\psi$ such that on input $n\in\bN$, $\psi(n)$ is a (finite) list of all admissible words of $X$ of length $2n+1$.  A point $x$ or a shift space $X$ is said to be {\em computable} if there is a Turing machine which is an oracle for $x$ or $X$, respectively.

Furthermore, an {\em oracle approximation for (the language of) $X$ from above} is a function $\psi$ such that on inputs $n,k\in\bN$, $\psi(n,k)$ is a finite list of words of length $2n+1$ which includes all admissible words of $X$ of length $2n+1$.  Moreover, for fixed $n$, the sequence of lists $(\psi(n,k))_{k\in\bN}$ is decreasing, and, for $k\in\bN$ sufficiently large, $\psi(n,k)$ is a list of all admissible words of $X$ of length $2n+1$.  The shift space $X$ is said to be {\em upper semi-compuable} if there is a Turing machine which is an oracle approximation for $X$ from above.  Similarly, we define a space $X$ to be {\em lower semi-computable} if its complement is {\em upper semi-computable}.
\end{definition}

We observe that the full shift $X=\Sigma$ is a computable shift since all words of length $2n+1$ can be explicitly listed by an algorithm.  For points in $\Sigma$, we use the metric 
$$d(x,y)=\begin{cases} 2^{-k} & \text{if } x\neq y \text{ and } k \text{ is the minimum value of $i$ such that } x[-i,i]\not=y[-i,i],\\
0 & \text{if } x=y
\end{cases}.$$
We note that this metric coincides with the $d_{1/2}$-metric  in Equation \eqref{defmetX}.
In addition, given oracles for $x$ and $y$, a Turing machine can compute $d(x,y)$ to any precision by comparing $x[-i,i]$ to $y[-i,i]$ for $i$ sufficiently large.  Therefore, we note that when $X$ is a computable metric space, the distance function is a computable function.

Upper and lower semi-computable subshifts are so named because the corresponding set $X$ is upper or lower semi-computable in $\Sigma^{\pm}$, see, e.g., \cite{BBRY,RYSurvey} for more details.  In the literature, computable subshifts are sometimes called decidable, and upper semi-computable subshifts are called effective \cite{DecidableSubshift:2019,EffectiveSubshift:2016}.

Less precisely, an approximation for $X$ from above gives a list of words that may appear in $X$, while an approximation for $X$ from below gives a list of words that must appear in $X$.  We observe that the computability for $X$ is equivalent to upper and lower semi-computability since, for $k$ sufficiently large, the upper and lower approximations are equal for fixed $n$.

In order to show that $X$ is a computable metric space, we fix the lexicographic total ordering on $\Sigma$.  In this ordering, $x<y$ if and only if either $x_0<y_0$ or there exists a $k$ such that $x[-k,k]=y[-k,k]$ and either $x_{k+1}<y_{k+1}$ or else $x_{k+1}=y_{k+1}$ and $x_{-k-1}<y_{-k-1}$.  We note that given oracles for distinct points $x$ and $y$, there exists a Turing machine that can decide which is greater.  Determining equality, however, is undecidable.

\begin{lemma}\label{lem:X:computable}
Let $X\in\ClosedShift$ with the finite alphabet $\mathcal{A}_d$.  Assume that $X$ is given by an oracle $\psi$.  Then $X$ is a computable metric space.
\end{lemma}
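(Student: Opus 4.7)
The plan is to construct a computable dense sequence $\mathcal{S}_X=(s_i)_{i\in\bN}\subset X$ and to verify that the pairwise distance map $(i,j)\mapsto d(s_i,s_j)$ is uniformly computable from the oracle $\psi$. The natural index set is the language $\cL(X)$ itself: I first use $\psi$ to enumerate $\cL(X)$ as $w_0,w_1,w_2,\ldots$ by listing $\psi(n)$ in lexicographic order for $n=0,1,2,\ldots$, and then I assign to each admissible word $w=w_i$ of length $2n+1$ a canonical point $s_i\in X$ with $s_i[-n,n]=w$.

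To build $s_i$ from $w$, I extend greedily one symbol on each side at a time. Starting with $u_n=w$, I query $\psi(k+1)$ and take $u_{k+1}$ to be the lexicographically smallest element of $\cL(X,2k+3)$ whose central $2k+1$ symbols equal $u_k$. Such an extension always exists because $u_k\in\cL(X)$, so the procedure never stalls, and the limit of the admissible cylinders $[u_k]_{-k}$ is a single point $s_i\in X$ (since $X$ is closed and every $u_k$ is a prefix-suffix of some element of $X$). Crucially, computing $s_i[-k,k]$ requires only finitely many queries to $\psi$ and is performed by one Turing machine that is uniform in $(i,k)$.

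Density is immediate from the construction: given $x\in X$ and $\varepsilon>0$, choose $n$ with $2^{-n}<\varepsilon$ and let $w=x[-n,n]\in\cL(X,2n+1)$; if $s_i$ is the canonical point attached to $w$, then $s_i[-n,n]=w=x[-n,n]$, so $d(x,s_i)\leq 2^{-n}<\varepsilon$. To compute $d(s_i,s_j)$ to precision $2^{-N}$, I compute $s_i[-N-1,N+1]$ and $s_j[-N-1,N+1]$ via the greedy procedure above and return $2^{-k}$ where $k$ is the smallest index $0\leq k\leq N+1$ at which the two windows disagree, or $0$ if they agree throughout. In the latter case $d(s_i,s_j)\leq 2^{-N-1}$, so either output is within $2^{-N}$ of the true distance.

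There is no real obstacle here beyond bookkeeping: the greedy extension is well-defined and uses only information already provided by $\psi$, since admissibility of a finite word of length $2k+1$ is decided by a single inspection of $\psi(k)$. Moreover, all distance values in this metric lie in $\{0\}\cup\{2^{-k}:k\in\bN\}$, so approximating them to dyadic precision reduces to reading finitely many coordinates of $s_i$ and $s_j$, which in turn reduces to finitely many queries of $\psi$. This exhibits $(X,d,\mathcal{S}_X)$ as a computable metric space.
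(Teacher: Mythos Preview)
Your proof is correct and follows essentially the same route as the paper's: build a dense sequence indexed by admissible words by assigning to each $w\in\cL(X,2n+1)$ a canonical point of the cylinder $[w]_{-n}$, obtained by a lexicographic/greedy choice, and then compute pairwise distances by comparing finitely many coordinates using the oracle $\psi$. The only cosmetic differences are that the paper additionally indexes its dense points by a starting position $i\in\bZ$ (not needed for density) and selects the global lexicographic minimum of the cylinder at each window length rather than extending one symbol on each side at a time; your step-by-step greedy extension may produce a different canonical point, but it is equally computable and equally dense, so the argument goes through unchanged.
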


\begin{proof}
Since the distance between points given by oracles can be approximated to arbitrary precision, the key point in showing that $X$ is a computable metric space is to algorithmically construct a countable dense subset of points in $X$.  For each word $\tau\in\psi(n)=\cL(X,n)$ and index $i\in\bZ$, we choose $s_{\tau,i}$ to be the smallest element, lexicographically in $\cL(X)\cap[\tau]_i$.  This is a countable dense set since there are only countably many cylinders and for any $x\in X$, $s_{[x]_{-n}^n}$ is within $2^{-n}$ of $x$.

We next provide a Turing machine which is a uniform oracle for $s_{\tau,i}$.  In particular, we show how to compute $s_{\tau,i}[-k,k]$ for any $k\in\bN$.  Let $\ell=\max\{k,|i|,i+n\}$ and $\eta$ be the lexicographically smallest word in $\cL(X,\ell)\cap[\tau]_i$, i.e., $\eta_{i+j}=\tau_j$ for all $0\leq j\leq n-1$.  Thus, $s_{\tau,i}[-k,k]=\eta_{-k}\dots\eta_k$.
\end{proof}

In Section \ref{sec:comput_fin}, we show that the space $\ClosedShift$ is a computable metric space using the following distance function:  Suppose that $X,Y\in\ClosedShift$, then define
$$d(X,Y)=\begin{cases} 2^{-k} & \text{if } X\neq Y \text{ and } k \text{ is the minimum value of $i$ such that }\cL(X,i)\not=\cL(Y,i),\\
0 & \text{if } X=Y.
\end{cases}$$
It is straight-forward to show that this is a metric.  In addition, when $\psi$ and $\phi$ are oracles for $X$ and $Y$, respectively, the distance between two shifts can be algorithmically computed.  We leave the details to the reader.

We prove that the space of potentials on a shift space $X$ given by an oracle $\psi$ with the finite alphabet $\mathcal{A}_d$ is a computable metric space.  For additional details, we refer the reader to \cite{BurrWolf2018}.  We observe that the locally constant potentials with rational values (denoted by $LC(X,\bQ)$) are dense in $C(X,\bR)$  with respect to the supremum norm.  Since $LC_k(X,\bQ)$ is in bijective correspondence with $\mathbb{Q}^{|\psi(n)|}$, it follows that each potential in $LC(X,\bQ)$ can be represented by a pair $(k,q)$ where $q\in\mathbb{Q}^{|\psi(n)|}$.

\begin{definition}
Suppose that $\phi\in C(X,\bR)$.  An {\em oracle} for $\phi$ is a function $\chi$ such that on input $n$, $\chi(n)$ is a locally constant potential in $LC(X,\bQ)$ such that $\|\chi(n)-\phi\|_\infty<2^{-n}$.  Moreover, $\phi$ is {\em computable} if there is a Turing machine $\chi$ which is an oracle for $\phi$.
\end{definition}

Since an oracle $\psi$ for $X\in\ClosedShift$ can list all the cylinders of $X$ of any length $k$, there are Turing machines which list all potentials in $LC_k(X,\bQ)$.  Since the maximum of locally constant potentials with known cylinder length can be computed, the supremum of the difference between two locally constant potentials can be computed.  From this, it directly follows that $C(X,\bR)$ is a computable metric space.
%-_-_-_-_-_-_-_-_-_-_-_-_-_-_-_-_-_-_-_-_-_-_-_-_-_-_-_-_-_-_-_-_-_-_-_-_-_-_-_-_-_-_-_-_-_-_-_-_-_-_-_-_-_-_-_-_-_-_-_-_-_-_-_-_
 \section{Computability for subshifts presented by finite data} \label{sec:comput_fin}
 
We discuss the computability of SFTs and Sofic shifts,  which we call presented by finite data as they can be completely described by finite combinatorial data. Our key tool to proving computability for a coded shift $X$ is to approximate $X$ from inside by a sequence of subshifts presented by finite data.  We begin by recalling the definition of SFTs and Sofic shifts.  A subshift of finite type (SFT) is a subshift which can be described by a finite set of forbidden words. A Sofic shift is the collection of all edge sequences in a finite, directed, edge-labeled graph.
\begin{definition}[SFTs]
Let $\calF\subset\calA^\ast$ be a finite set of words, called the {\em forbidden words}.  The {\em subshift of finite type (SFT)} $X_\calF$ is the largest shift space where no element of $\calF$ appears as a subword of any $x\in X_\calF$.
\end{definition}

\begin{definition} [Sofic shifts] Let $\mathcal{T}=(G,E,L)$ be a labeled directed graph, where $G$ is a graph with directed edge set $E$ and the {\em labeling function} $L: E\rightarrow \mathcal{A}$ assigning a {\em label} $L(e)$ from the finite alphabet $\mathcal{A}$ to each edge $e\in E$. Let $\xi=\cdots e_{-1}e_0e_1\cdots$ be a bi-infinite path on $G$, i.e., $\xi$ is a point in the edge shift $X_{(G,E)}$.  The {\em label} of the path $\xi$ is
$$L(\xi)=\cdots L(e_{-1})L(e_0)L(e_1)\cdots\in \mathcal{A}^{\mathbb{Z}}.$$
The set of all bi-infinite labels of paths is denoted by  $X_{\mathcal{T}}=\{L(\xi): \xi\in G\}$.  
A subset $X$ of the full shift is called a {\em Sofic shift}  if $X=X_{\mathcal{T}}$ for some labeled graph $\mathcal{T}$.
\end{definition}

We refer the interested reader to \cite{MarcusLind1995} and references therein for additional details on SFTs and Sofic shifts.
Sofic shifts are those subshifts that are  factors of SFTs \cite[Section 3]{MarcusLind1995}. 
It is shown in \cite{BlanchardHansel1986} that every transitive Sofic shift is a coded shift.
We illustrate how the finite data defining SFTs and Sofic shifts can be used to algorithmically construct and study these shifts.

\begin{lemma} \label{lem:SFTcomputable}
Suppose that $\mathcal{F}$ is a finite collection of forbidden words that defines the SFT $X_{\cF}$.  There exists a Turing machine which takes $\mathcal{F}$ as input and produces a finite alphabet $\cA_{d'}$ and transition matrix $A'$ such that $X_{\cF}$ is conjugate to $X_{A'}$ via the map $h:X_{\cF}\rightarrow X_{A'}$.  Moreover, there exists a Turing machine that computes an oracle for $\phi_{\cF}\circ h^{-1}\in C(X_{A'},\bR)$ from any oracle for $\phi_{\cF}\in C(X_{\cF},\bR)$.  Conversely, there is a Turing machine that computes an oracle for $\phi_{A'}\circ h\in C(X_{\cF},\bR)$ from any oracle for $\phi_{A'}\in C(X_{A'},\bR)$.
\end{lemma}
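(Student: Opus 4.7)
The plan is to invoke the classical higher block recoding that converts any SFT into a one-step SFT, and then track how this recoding acts on locally constant rational potentials.

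Let $N+1\geq 2$ denote the length of the longest word in $\cF$. The Turing machine enumerates the finitely many words in $\cA_{d}^{N}$, discards those that contain some $w\in\cF$ as a subword, and takes the remaining list as the new alphabet $\cA_{d'}$. It then writes down a $d'\times d'$ transition matrix $A'$ by declaring $A'_{u,v}=1$ iff $u_{2}\cdots u_{N}=v_{1}\cdots v_{N-1}$ and the overlap word $u_{1}\cdots u_{N}v_{N}$ contains no word of $\cF$. All of these checks terminate in finite time from the finite data $\cF$.

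The conjugacy is the sliding block code $h:X_{\cF}\to X_{A'}$ with $h(x)_{i}=x[i,i+N-1]$, whose continuous inverse is $h^{-1}(y)_{i}=(y_{i})_{1}$, the first symbol of the length-$N$ word $y_{i}$. Both maps are finite-window block codes that commute with the shift, hence realize a topological conjugacy; they also give explicit, algorithmic translations between cylinder words of $X_{\cF}$ and cylinder words of $X_{A'}$.

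For the potential transfer, recall that an oracle for a continuous potential returns at precision $n$ an element of $LC_{k_{n}}(\cdot,\bQ)$ that is $2^{-n}$-close in sup-norm. Given an oracle for $\phi_{\cF}$ producing such a $\psi_{n}$, assign to each admissible cylinder $[w]_{-k_{n}}$ of length $2k_{n}+1$ in $X_{A'}$ the rational value that $\psi_{n}$ takes on the $X_{\cF}$-cylinder obtained by applying $h^{-1}$ letter-wise to $w$; this is well-defined because $h^{-1}$ is a one-letter block code, so any two points in the $X_{A'}$-cylinder land in a common $X_{\cF}$-cylinder of the same width. The result is a potential $\psi_{n}\circ h^{-1}\in LC_{k_{n}}(X_{A'},\bQ)$ with $\|\psi_{n}\circ h^{-1}-\phi_{\cF}\circ h^{-1}\|_{\infty}=\|\psi_{n}-\phi_{\cF}\|_{\infty}<2^{-n}$, which is exactly an oracle for $\phi_{\cF}\circ h^{-1}$. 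In the reverse direction, for $\psi'_{n}\in LC_{k_{n}}(X_{A'},\bQ)$, the pullback $\psi'_{n}\circ h$ depends on $x[-k_{n},k_{n}+N-1]$, so after symmetrizing the window it sits in $LC_{k_{n}+N-1}(X_{\cF},\bQ)$ within $2^{-n}$ of $\phi_{A'}\circ h$. The only mildly subtle point is bookkeeping how the window width and alphabet grow under the block recoding, but there is no genuine mathematical obstacle beyond this.
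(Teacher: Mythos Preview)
Your argument is correct and follows essentially the same higher block recoding as the paper: pass to the alphabet of allowed $N$-blocks, define the overlap transition matrix, and use the sliding block code as the conjugacy. The paper adds two minor points you omit: it handles the trivial edge case where the longest forbidden word has length $\leq 1$ (full shift on a subalphabet), and it iteratively trims symbols with all-zero rows or columns so that every letter of $\cA_{d'}$ actually occurs in $X_{A'}$; neither omission affects the correctness of the conjugacy or the potential transfer, since $X_{A'}$ as a shift space is unchanged by the presence of dead symbols.
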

\begin{proof}
Let $k$ be the length of the longest word in $\cF$.  If $k \leq 1$, then $X_{\cF}$ is a full shift, perhaps on a smaller alphabet. Therefore, we assume that $k>1$. Suppose that there are $d_{k-1}$ words of length $k-1$ which do not contain an element of $\cF$ as a subword.  Let $\{w_1,\dots,w_{d_{k-1}}\}$ be the set of these words.
Moreover, these words can be algorithmically computed by discarding words in $\cA_d^{k-1}$ which contain elements of $\cF$ as subwords.

Let $A$ be the transition matrix with $d_{k-1}$ rows and columns corresponding to the words $w_i$ where $A_{i,j}=1$ if and only if $w_i= x_0\cdots x_{k-1}$, $w_j=x_1\cdots x_k$, and $x_0 \cdots x_k$ is not an element $\cF$.
The matrix $A$ is trimmed by removing all words that can not appear in bi-infinite sequences, i.e., when it is impossible to extend these words in either the forward or the backward direction. More precisely, we iteratively remove all words $w_i$ with corresponding rows or columns consisting of all zeros in the transition matrix $A$.  When this procedure terminates, the resulting square matrix is the transition matrix $A'$ and the shift $X_{A'}$ is conjugate to $X_{\cF}$.  Moreover, $d'$ is the number of rows of $A'$.

For any $x_{A'}\in X_{A'}$, let $x_{\cF}$ be its image under the conjugacy.  We observe that $x_{A'}[i,j]$ determines $x_\cF[i,j+k-1]$ since every character in $x_{A'}$ corresponds to a word of length $k-1$ in the original alphabet.  Therefore, if $d(x_{A'},y_{A'})=2^{-n}$, then $d(x_{\cF},y_{\cF})\leq 2^{-n}$.  This correspondence between words in $X_{A'}$ and $X_{\cF}$ can be computed by a Turing machine since the transformation between them is a direct replacement.  Since the composition of computable maps is computable, the conjugated potentials are also computable.
\end{proof}

We use this lemma to prove that the space of shift spaces is a computable metric space.

\begin{corollary}
The space $\ClosedShift$ with finite alphabet $\mathcal{A}_d$ is a computable metric space.
\end{corollary}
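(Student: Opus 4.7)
The plan is to exhibit a countable dense subset of $\ClosedShift$ consisting of SFTs and then verify that pairwise distances within this set are uniformly computable from finite presentations. Since any SFT is determined by a finite list $\cF\subset\cA_d^\ast$ of forbidden words, such lists can be G\"odel-encoded and enumerated by a Turing machine, giving a countable indexing $(X_{\cF_i})_{i\in\bN}$ of the proposed dense set.

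To verify density, I would fix $X\in\ClosedShift$ and $n\in\bN$ and take $Y_n=X_{\cF_n}$, where $\cF_n=\cA_d^n\setminus\cL(X,n)$. The claim is that $\cL(Y_n,k)=\cL(X,k)$ for all $k\le n$, which forces $d(Y_n,X)\le 2^{-n}$. The inclusion $\cL(X,k)\subset\cL(Y_n,k)$ is immediate: each $x\in X$ has every length-$n$ subword in $\cL(X,n)$, hence not in $\cF_n$, so $x\in Y_n$ and therefore every $w\in\cL(X,k)$ appears in $Y_n$. Conversely, if $w\in\cL(Y_n,k)$ with $k\le n$, then $w$ extends inside some $y\in Y_n$ to a length-$n$ subword $w'$; since no length-$n$ subword of $y$ lies in $\cF_n$, we have $w'\in\cL(X,n)$, whence $w\in\cL(X,k)$.

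For the computability of the distance, given SFTs $X_{\cF_1}$ and $X_{\cF_2}$ together with a requested precision $2^{-n}$, Lemma \ref{lem:SFTcomputable} algorithmically produces a trimmed transition matrix $A'_j$ and a conjugacy for each $X_{\cF_j}$. Using $A'_j$, I can enumerate $\cL(X_{\cF_j},k)$ for each $k=1,\dots,n+1$ by listing length-$k$ paths in the trimmed graph and applying the conjugacy back to $\cA_d$. Comparing the two languages length by length: if they first disagree at some $k\le n+1$, I return $2^{-k}$, which is $d(X_{\cF_1},X_{\cF_2})$ exactly; if they agree at every length $k\le n+1$, I return $0$, in which case the true distance is at most $2^{-(n+1)}<2^{-n}$, so the tolerance is met.

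The main obstacle is the uniform correctness of this language enumeration: naively sifting all words in $\cA_d^k$ that avoid the forbidden subwords overcounts, since such a word may fail to extend to a bi-infinite point of $X_\cF$ and hence need not be admissible. The trimming step inside Lemma \ref{lem:SFTcomputable}, where rows and columns of zeros are iteratively deleted, is precisely what guarantees that every length-$k$ path in the remaining graph corresponds to an actual admissible word. Once this uniform enumeration is secured, the remainder reduces to elementary finite combinatorics over $\cA_d^\ast$, and we conclude that $(\ClosedShift,d,\{X_{\cF_i}\}_{i\in\bN})$ is a computable metric space.
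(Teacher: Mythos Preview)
Your proof is correct and follows essentially the same approach as the paper: both take SFTs given by finite sets of forbidden words as the countable dense subset and invoke Lemma~\ref{lem:SFTcomputable} to compute their languages. You supply considerably more detail than the paper does---in particular the verification that $\cL(Y_n,k)=\cL(X,k)$ for $k\le n$ and the explicit distance computation via language comparison---whereas the paper leaves density implicit and defers the distance computation to the reader.
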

\begin{proof}
As in the proof of Lemma \ref{lem:X:computable}, the challenge in this proof is to define a countable dense subset of points in $\ClosedShift$.  For any $S\subset \mathcal{A}^{2n+1}$, we define $X_S$ to be the SFT whose forbidden words are $S$, provided $X_S\not=\emptyset$.  By Lemma \ref{lem:SFTcomputable},  this shift space can be computed. Since $\mathcal{A}_d$ is finite, this collection of SFTs is a uniformly computable collection of points which is dense in $\ClosedShift$.
\end{proof}

We now discuss the computability properties of Sofic shifts.

\begin{lemma} \label{lem:shjbc0}
Let $X$ be a Sofic shift given by a labeled directed graph $\cG$.  There is a Turing machine that computes an SFT $X_A$ given by a transition matrix $A$ such that $X$ is a factor of $X_A$ via the factor map $h : X_A \to X$.  Moreover, there exists a Turing machine which produces an oracle  for $\phi_{A}\circ h\in C(X_A,\bR)$ from any oracle for $\phi\in C(X,\bR)$.
\end{lemma}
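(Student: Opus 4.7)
The plan is to model this on the proof of Lemma \ref{lem:SFTcomputable}, but using the edge shift of $\mathcal{G}$ in place of the window construction for forbidden words. Given the labeled graph $\mathcal{G} = (G, E, L)$, I would first construct the edge-shift transition matrix $A$ on the alphabet $E$: set $A_{e,e'} = 1$ if the terminal vertex of $e$ equals the initial vertex of $e'$, and $0$ otherwise. Because the Sofic shift must be compact and shift-invariant, I need $X_A$ to consist of bi-infinite paths only, so I iteratively trim $A$: remove every edge (row/column) corresponding to a vertex with no incoming edges or no outgoing edges, and repeat until stable. This terminates since $E$ is finite, and the procedure is executable by a Turing machine from the finite input $\mathcal{G}$. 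The resulting matrix $A'$ defines an SFT $X_{A'}$ whose points are exactly the bi-infinite edge sequences in the trimmed graph.

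Next I would define $h : X_{A'} \to X$ as the $1$-block factor map $h(\xi)_k = L(\xi_k)$. Surjectivity is exactly the definition $X = X_\mathcal{T}$, since trimming removes only edges that cannot appear in any bi-infinite path (and therefore do not affect the label set). Continuity and shift-equivariance are immediate from the fact that $h$ is a sliding-block code of window size $1$, so $h$ is a genuine factor map. The labeling function $L$ is a finite table given with $\mathcal{G}$, so $h$ is completely determined by finite data that the Turing machine outputs along with $A'$.

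For the potential, I would use that $h$ being a $1$-block code interacts nicely with the locally constant rational potentials $LC(X,\bQ)$ used in Section~\ref{sec:rev:comput} to represent $C(X,\bR)$. If $\chi$ is an oracle for $\phi \in C(X,\bR)$, then $\chi(n) \in LC(X,\bQ)$ is locally constant on cylinders $[\tau]$ of some length $2k+1$ in the alphabet $\mathcal{A}$ with $\|\chi(n) - \phi\|_\infty < 2^{-n}$. Define a locally constant rational potential $\chi'(n)$ on $X_{A'}$ of window size $2k+1$ by assigning to the cylinder $[e_{-k}\cdots e_k] \subset X_{A'}$ the value $\chi(n)$ takes on $[L(e_{-k})\cdots L(e_k)] \subset X$. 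This is computable from $\chi(n)$ and the finite labeling table, and since $h$ is a $1$-block code we have $\chi'(n) = \chi(n) \circ h$ pointwise, hence $\|\chi'(n) - \phi \circ h\|_\infty = \|\chi(n) - \phi\|_\infty < 2^{-n}$. Thus $n \mapsto \chi'(n)$ is the required oracle for $\phi \circ h$.

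I do not expect significant obstacles here: the statement is essentially the computability analogue of the standard fact that a Sofic shift is a $1$-block factor of its underlying edge SFT. The only step requiring care is the trimming loop, which must be carried out explicitly so that the output $A'$ actually defines an SFT in the sense used elsewhere in the paper, and the observation that a $1$-block factor code pulls locally constant window-$(2k+1)$ potentials back to locally constant window-$(2k+1)$ potentials with the same sup norm, which is what makes the oracle translation uniform in $n$.
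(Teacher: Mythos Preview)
Your proposal is correct and follows essentially the same approach as the paper: the paper also takes $A$ to be the edge-adjacency matrix of $\cG$, observes that $X_A$ is an extension of $X_\cG=X$, and defers the oracle construction for $\phi\circ h$ to the argument of Lemma~\ref{lem:SFTcomputable}. Your write-up simply fills in details the paper omits (the explicit trimming loop and the pullback of locally constant potentials along the $1$-block code), none of which deviate from the intended strategy.
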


\begin{proof} 
Let $A$ be the \emph{edge-adjacency} matrix of $\cG$. The  SFT $X_A$ corresponding to $A$ is an extension of $X_{\cG} = X$. Since the graph $\cG$ is finite, $A$ can be computed from $\cG$. The construction of an oracle for $\phi_A$ is similar to the construction in Lemma~\ref{lem:SFTcomputable} and is left to the reader.
\end{proof}

We end this section with a discussion of renewal systems, which are coded shifts with a finite generating set.    We refer the interested reader to \cite{restivo1992note,restivo1989finitely} and the references therein for more details.  We observe that renewal shifts are Sofic.  We further note  that  renewal shifts are, in general, not SFTs.

\begin{lemma} \label{lem:jkc83k}
Let $\cW$ be a finite set of words in the alphabet $\cA_d$, and let $X(\cW)$ be the coded shift with generating set $\cW$.  There exists a Turing machine which takes $\cW$ as input and produces a labeled directed graph $\cG$ such that the Sofic shift $X_\cG$ is conjugate to $X(\cW)$.
\end{lemma}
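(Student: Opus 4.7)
The plan is to build the classical bouquet (or flower) graph associated with $\cW$ and then verify that its Sofic shift coincides with $X(\cW)$. I would first construct a labeled directed graph $\cG$ as follows. Introduce a single distinguished vertex $v_0$. For each word $w = w_0 w_1 \cdots w_{n-1} \in \cW$ with $n \geq 2$, add fresh internal vertices $v_{w,1}, \ldots, v_{w,n-1}$ together with the labeled edges $v_0 \xrightarrow{w_0} v_{w,1}$, $v_{w,j} \xrightarrow{w_j} v_{w,j+1}$ for $1 \leq j \leq n-2$, and $v_{w,n-1} \xrightarrow{w_{n-1}} v_0$. When $n = 1$, simply add a self-loop at $v_0$ labeled $w_0$. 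Since $\cW$ is a finite list of finite words, a Turing machine reading $\cW$ outputs the (finite) vertex set and labeled edge set of $\cG$ directly.

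Next I would show that $X_\cG = X(\cW)$ (which gives conjugacy with the identity map). For the inclusion $X(\cW) \subseteq X_\cG$, observe that any finite concatenation of words in $\cW$ is precisely the label of a path from $v_0$ back to $v_0$ in $\cG$, so every element of $X(\cW)_{\rm seq}$ is the label of some bi-infinite path in $\cG$. Thus $X(\cW)_{\rm seq} \subseteq X_\cG$. Since $X_\cG$ is a closed shift-invariant subset of $\Sigma$ and $X(\cW)$ is, by definition, the smallest shift space containing $X(\cW)_{\rm seq}$, we obtain $X(\cW) \subseteq X_\cG$.

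For the reverse inclusion $X_\cG \subseteq X(\cW)$, the key structural observation is that $v_0$ is the only branching vertex of $\cG$: every internal vertex $v_{w,j}$ has in-degree and out-degree equal to $1$ and lies on the unique cycle of length $|w|$ returning to $v_0$. Consequently, every bi-infinite path in $\cG$ must pass through $v_0$ infinitely often in both directions, and between consecutive visits to $v_0$ it traverses exactly one of the attached cycles. Thus the label of any bi-infinite path is a bi-infinite concatenation of words in $\cW$, i.e., an element of $X(\cW)_{\rm seq} \subseteq X(\cW)$.

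The proof is essentially bookkeeping, and there is no substantive obstacle; the only point requiring care is the claim that bi-infinite paths cannot avoid $v_0$ from some point on, which relies on the boundedness of the cycle lengths (i.e., the finiteness of each generator $w$). Once this is handled, $X_\cG = X(\cW)$ follows, so $X(\cW)$ is indeed Sofic and the presenting graph $\cG$ is produced algorithmically from $\cW$.
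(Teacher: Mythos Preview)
Your construction is exactly the bouquet-of-circles graph the paper uses, and your verification that $X_\cG = X(\cW)$ (hence conjugate via the identity) is correct and more detailed than what the paper provides. The paper simply states the construction and asserts it works, while you supply the two-inclusion argument and the key observation that every bi-infinite path must visit $v_0$ infinitely often in both directions because each petal has bounded length; this is precisely the right justification.
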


\begin{proof}
The constructed labeled graph is a generalization of a bouquet of circles: There is one central vertex $v$ and each word $w\in\cW$ corresponds to a directed path of length $|w|$ beginning and ending at $v$ and labeled by the characters in $w$ in order.  Since $\cW$ consists of finitely many words, this graph is finite and can be constructed by a Turing machine.
\end{proof}

\section{Computability of the topological pressure for coded shifts}\label{sec:proof:com}

The main goal of this section is to prove  Theorem A.  First, we discuss the computability of the  topological pressure for some special classes of shift spaces and then prove that the pressure is, in general, computable from above.  The remainder of this section is devoted to the proof of the lower computability of the pressure for coded shifts and potentials satisfying the assumptions of Theorem A.

\subsection{Computability of the pressure in special cases}

When the shift space $X$ is presented by finite data, e.g., SFT, Sofic, or renewal, this additional information can be leveraged to derive computability.

\begin{proposition}\label{cpSofic}Suppose that $X$ is a Sofic shift given by a directed labeled graph $\cG$. Then the topological pressure of $X$ is computable.
\end{proposition}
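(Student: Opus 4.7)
The plan is to prove upper and lower semi-computability of $\phi\mapsto\Ptop(X,\phi)$ separately.

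For upper semi-computability, one first notes that from the labeled graph $\cG$ the language $\cL(X,n)$ is uniformly computable: enumerate all labels of paths of length $n$ in $\cG$ and remove duplicates. With an oracle for the language of $X$ in hand, Lemma \ref{lem:pres_upper} then gives that $\phi\mapsto\Ptop(X,\phi)$ is upper semi-computable on $C(X,\bR)$.

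For lower semi-computability, the strategy is to reduce the computation to the spectral radius of a nonnegative matrix with computable entries. Given an oracle for $\phi\in C(X,\bR)$, produce locally constant rational approximations $\phi_k\in LC(X,\bQ)$ depending on cylinders of length $2k+1$ with $\|\phi-\phi_k\|_\infty<2^{-k}$. Since $\phi\mapsto\Ptop(X,\phi)$ is $1$-Lipschitz with respect to the supremum norm, the estimate $|\Ptop(X,\phi)-\Ptop(X,\phi_k)|<2^{-k}$ reduces the problem to computing $\Ptop(X,\phi_k)$ exactly.

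To do so, I first pass to the $(2k+1)$-st higher-block presentation of $\cG$, obtaining a labeled graph whose Sofic shift is conjugate to $X$ and on which the induced version of $\phi_k$ depends only on the current symbol. Next, I apply the classical subset construction to obtain a right-resolving labeled graph $\cG'$ presenting $X$. By Lemma \ref{lem:shjbc0}, I then extract the edge-SFT $X_A$ with factor map $h\colon X_A\to X$ and an oracle for the rational, one-coordinate potential $\phi_k\circ h$. Since $\cG'$ is right-resolving, $h$ has zero relative entropy, so $\Ptop(X,\phi_k)=\Ptop(X_A,\phi_k\circ h)$. Finally, because $\phi_k\circ h$ depends on a single coordinate, this common value equals $\log\rho(B)$, where $B$ is the nonnegative rational matrix with entries $B_{ij}=A_{ij}\exp((\phi_k\circ h)(j))$, and $\rho(B)$ denotes its spectral radius; spectral radii of rational matrices are computable reals.

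\textbf{Main obstacle.} The crux is the algorithmic passage from the arbitrary labeled graph $\cG$ to a right-resolving cover whose edge-SFT has the same pressure as $(X,\phi_k)$, while simultaneously tracking the pulled-back potential. This relies on the classical subset construction together with the fact that right-resolving factor maps have zero relative entropy, so that the pressures match. Once these are in place, the remaining step is the finite-dimensional Perron-Frobenius spectral computation of $\rho(B)$, which is uniformly computable in the rational input data.
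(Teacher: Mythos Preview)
Your argument is correct and follows the same core route as the paper: pass from the Sofic shift to an SFT edge-shift cover and use that the factor map preserves pressure. The paper's proof is much terser---it simply invokes Spandl \cite{Spandl2007,Spandl_sofic_2008} for the computability of the SFT pressure, uses Lemma~\ref{lem:shjbc0} to produce the edge SFT $X_A$, and then cites \cite[Theorem~16]{Spandl_sofic_2008} for the preservation of pressure under finite-to-one factor maps. Your version is more self-contained in two respects: you insert the subset construction to force the presentation to be right-resolving (this is precisely what makes the labeling map from the edge shift finite-to-one, a point the paper leaves to the cited reference since Lemma~\ref{lem:shjbc0} alone does not guarantee it), and you compute the SFT pressure directly as a Perron spectral radius rather than citing Spandl. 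One small slip: the matrix $B$ has entries $A_{ij}e^{q}$ with $q\in\bQ$, so it is not a rational matrix; however, its entries are computable reals, and the spectral radius of a nonnegative matrix with computable entries is still a computable real, so the conclusion stands. Note also that your ``lower semi-computability'' half in fact already delivers full computability on its own (since $|\Ptop(\phi)-\Ptop(\phi_k)|<2^{-k}$ and $\Ptop(\phi_k)$ is a computable real), making the separate appeal to Lemma~\ref{lem:pres_upper} redundant, though not incorrect.
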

\begin{proof}
In \cite{Spandl2007, Spandl_sofic_2008}, it is shown that the topological pressure of an SFT is computable from its transition matrix.  By Lemmas \ref{lem:SFTcomputable} and \ref{lem:shjbc0}, we compute an SFT $X_A$ such that $X$ is a finite-to-one factor of $X_A$ from the labeled graph $\cG$.  Since finite-to-one  factors preserve pressure \cite[Theorem 16]{Spandl_sofic_2008},  the topological pressure of $X_A$ and the topological pressure of $X$ are computable.
 \end{proof}

\begin{corollary}\label{cSofic}Let $\cW$ be a finite set of words in the alphabet $\cA_d$ and let $X(\cW)$ be the coded shift with generating set $\cW$. Then the topological pressure of $X$ is computable. 
\end{corollary}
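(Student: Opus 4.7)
The plan is to reduce this corollary directly to the Sofic case handled in Proposition~\ref{cpSofic}. Given the finite generating set $\cW$ over the alphabet $\cA_d$, Lemma~\ref{lem:jkc83k} algorithmically produces a finite labeled directed graph $\cG$ such that the Sofic shift $X_{\cG}$ is topologically conjugate to $X(\cW)$. In particular, $X(\cW)$ is itself a Sofic shift, given by a presentation that the Turing machine can explicitly write down from $\cW$.

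By Proposition~\ref{cpSofic}, the topological pressure on $X_{\cG}$ is computable. The conjugacy $h\colon X_{\cG}\to X(\cW)$ is a sliding block code of bounded radius, determined explicitly by $\cG$, so given any oracle for $\phi\in C(X(\cW),\bR)$ one can algorithmically produce an oracle for $\phi\circ h\in C(X_{\cG},\bR)$, exactly as in the analogous transfers in Lemmas~\ref{lem:SFTcomputable} and \ref{lem:shjbc0}. Since topological pressure is invariant under topological conjugacy,
\[
\Ptop(X(\cW),\phi)=\Ptop(X_{\cG},\phi\circ h),
\]
and the computability on $X_{\cG}$ transfers to $X(\cW)$.

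The main (and only) obstacle is the usual effective-conjugacy bookkeeping: one must verify that the bouquet construction in Lemma~\ref{lem:jkc83k} yields a genuine conjugacy, together with an explicit block map from $\cW$. Even if the constructed map is only guaranteed to be a finite-to-one factor map---which can happen when a generator is a concatenation of shorter generators---the pressure still transfers, since finite-to-one factor maps preserve pressure by \cite[Theorem 16]{Spandl_sofic_2008}, exactly as invoked in the proof of Proposition~\ref{cpSofic}. In either case, the conclusion follows.
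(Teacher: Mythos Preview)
Your proof is correct and follows essentially the same route as the paper: invoke Lemma~\ref{lem:jkc83k} to pass from the finite generating set $\cW$ to an explicit finite labeled graph $\cG$, then apply Proposition~\ref{cpSofic}. The paper's proof is two sentences; your added discussion of oracle transfer and the finite-to-one contingency is harmless but unnecessary, since the bouquet construction in Lemma~\ref{lem:jkc83k} gives $X_{\cG}=X(\cW)$ as the \emph{same} subshift of $\cA_d^{\bZ}$ (the label sequences of bi-infinite paths in the bouquet are exactly the concatenations of generators, and the Sofic shift is automatically closed), so no conjugacy or factor-map bookkeeping is needed.
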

\begin{proof}
From the code words $\cW$, we use Lemma \ref{lem:jkc83k} to compute the corresponding labeled directed graph $\cG$.  The result then follows from Proposition \ref{cpSofic}.
\end{proof}
 
\subsection{Upper semi-computability of the topological pressure}
We show that the topological pressure of a shift space $X$ is upper semi-computable when
 its language is given by an oracle. We note that this result holds for all shift spaces with a finite alphabet  and not only for coded shifts.
\begin{proposition}\label{lem:pres_upper}
Let $X$ be a subshift given by the oracle $\psi$. Then the topological pressure $\Ptop(X,\cdot)$ is upper semi-computable on $C(X,\bR)$. 
\end{proposition}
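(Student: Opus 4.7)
The plan is to exploit the infimum representation
$$
\Ptop(\phi) = \inf_{n \ge 1} \tfrac{1}{n} \log Z_n(\phi),
$$
which follows from the subadditivity of $(\log Z_n(\phi))_{n \ge 1}$ noted in Section \ref{sec:pressure+variation}. Because every partial partition sum is already an upper bound on $\Ptop(\phi)$, it suffices to (i) compute $\tfrac{1}{n}\log Z_n(\phi)$ to arbitrary precision uniformly in $n$ and in the input potential, and (ii) assemble these estimates into a monotone non-increasing sequence converging to $\Ptop(\phi)$. No auxiliary structure on $X$ beyond the language oracle $\psi$ is needed, so the argument will apply to arbitrary subshifts.

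\emph{Step 1: Uniform computability of $\phi \mapsto \tfrac{1}{n}\log Z_n(\phi)$.} Given an oracle for $\phi \in C(X,\bR)$ and a precision $\e > 0$, query the oracle to obtain $\chi_k \in LC(X,\bQ)$ with $\|\phi - \chi_k\|_\infty < 2^{-k}$ for $k$ chosen so that $n \cdot 2^{-k} < \e/3$. Suppose $\chi_k$ is constant on cylinders of length $2m+1$; then $S_n\chi_k$ is constant on cylinders of length $n+2m$, so for each $\tau \in \cL(X,n)$ the supremum $\sup_{x \in [\tau]} S_n\chi_k(x)$ reduces to a maximum over the finitely many words in $\cL(X,n+2m)$ extending $\tau$, all of which are produced by $\psi$. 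The partition sum $Z_n(\chi_k)$ is then a finite sum of exponentials of rationals, which can be approximated from above to within $\e/3$. Together with the Lipschitz-type bound $|\sup_{[\tau]} S_n\phi - \sup_{[\tau]} S_n\chi_k| \le n \|\phi - \chi_k\|_\infty$, this yields a computable upper bound $q_n(\phi)$ on $\tfrac{1}{n}\log Z_n(\phi)$ satisfying $q_n(\phi) - \tfrac{1}{n}\log Z_n(\phi) < 2^{-n}$ (one just sets $\e = 2^{-n}$).

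\emph{Step 2: Passing to the running minimum.} Define the Turing machine output on input $N \in \bN$ by
$$
r_N(\phi) \eqdef \min_{1 \le n \le N} q_n(\phi).
$$
By construction $r_N$ is non-increasing in $N$ and satisfies $r_N(\phi) \ge \tfrac{1}{n}\log Z_n(\phi) \ge \Ptop(\phi)$ for each $n \le N$. On the other hand, since $\tfrac{1}{n}\log Z_n(\phi) \to \Ptop(\phi)$ and $q_n(\phi) - \tfrac{1}{n}\log Z_n(\phi) \to 0$, we have $q_n(\phi) \to \Ptop(\phi)$, so that $r_N(\phi) \to \Ptop(\phi)$ as well. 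This exhibits $\Ptop(X,\cdot)$ as upper semi-computable on $C(X,\bR)$ in the sense of Section \ref{sec:rev:comput}.

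\emph{Main obstacle.} The only nontrivial point is the bookkeeping in Step 1: the error control on $Z_n(\phi)$ must account simultaneously for the approximation of $\phi$ by a locally constant rational potential (error $n \cdot 2^{-k}$ inside each exponential) and for the numerical evaluation of the finite sum of exponentials. Monotonicity of the output sequence then comes essentially for free from the infimum representation, and this is precisely the feature that distinguishes upper semi-computability from the much more delicate lower semi-computability addressed in Theorem A.
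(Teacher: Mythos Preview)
Your argument is correct and follows essentially the same route as the paper's proof: both use the infimum representation $\Ptop(\phi)=\inf_n \tfrac{1}{n}\log Z_n(\phi)$, show that each $\tfrac{1}{n}\log Z_n(\phi)$ is computable from the language oracle and an oracle for $\phi$ (by passing to a locally constant approximation and enumerating the finitely many extending words), and then take a running minimum to obtain a non-increasing rational sequence converging to $\Ptop(\phi)$. Your write-up is somewhat more explicit about the error bookkeeping, but the structure and ideas are identical.
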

\begin{proof} Let $\phi\in C(X,\bR)$ be given by an oracle $\chi$, and fix $n\in \bN$.   We first show that the $n$-th partition function $Z_n(\phi)$, see the definition in Equation \eqref{defnpart}, is computable.  Let $w\in \cL(X,n)$, and consider a partition of $[w]$ (in $X$) into small cylinders by listing all words beginning with $w$ in $\cL(X,n')$ for fixed $n'>n$.  Using this subdivision, we can approximate  $\sup_{x\in [w]}S_n\phi(x)$, see Definition \eqref{eq:Sn}, to any given precision by approximating both $\phi$ and its variation within each cylinder.  Repeating this procedure for all words $w\in \cL(X,n)$, and using that the exponential function is computable, we conclude that the  $n$-partition function $Z_n(\phi)$ can be computed 
to any given precision from the oracles $\psi$ and $\chi$.  It now follows from the computability of  the logarithm function that $\frac{1}{n}\log   Z_n(\phi)$ is computable.
By Equation \eqref{eqn:def:P}, $\frac{1}{n}\log   Z_n(\phi)$ converges to $\Ptop(\phi, X)$ from above as $n$ goes to $\infty$ (see, e.g., \cite{Walters1973}).  Therefore, by taking the minimum of approximations for $\frac{1}{n}\log   Z_n(\phi)$ for a strictly increasing sequence $(n_i)_i$, we obtain a non-increasing sequence of rational numbers which converges to $\Ptop(X,\phi)$ from above.
\end{proof}

\subsection{Lower semi-computability of the topological pressure}
We prove that the topological pressure for coded shifts satisfying the conditions of Theorem~A is lower semi-computable.
This completes the proof of Theorem~A when combined with Proposition \ref{lem:pres_upper}. 

Let $f:X\to X$ be a coded shift with unique representation $\cG=\{g_1,g_2,\dots\}$. 
In order  to prove Part (i) in Theorem A, we identify potentials $\phi\in C(X,\bR)$  that satisfy  
\begin{equation}\label{limit}\Ptop(\phi)=\lim_{m\to\infty} \Ptop(X_m,\phi),
\end{equation} where $X_m=X(\{g_1,g_2,\dots, g_m\})$. 
For the zero potential, this property reduces to $X$ being an almost Sofic shift, see \cite{MarcusLind1995,Petersen1986}. We define 
\begin{equation*}
 X_{\rm fin} = \bigcup_{m\in \bN} X_m\,\,\, {\rm and}\,\,\, X_{\rm lim}=X\setminus X_{\rm seq}.
\end{equation*}
To obtain Equation \eqref{limit}, we  consider the following two quantities:
 \begin{equation*}%\label{defPseq}
 \begin{split}
P_{\rm seq}(\phi)&\eqdef \sup\{P_\mu(\phi): \mu\in \cM\,\, {\rm and }\,\, \mu(X_{\rm seq})=1\},\\
P_{\rm lim}(\phi)&\eqdef \sup\{P_\mu(\phi): \mu\in \cM\,\, {\rm and }\,\, \mu(X_{\rm lim})=1\}.
\end{split}
\end{equation*} 

We recall some results from ergodic theory that are used in the proof of Theorem~A.  
 Let $f:X\to X$ be a subshift,  $\mu\in \cM_X$, and  $E$ be a Borel subset of $X$  with $\mu(E)>0$. We define the first return time function\footnote{There are situations where it is more efficient to work with general return times rather than the first return time, see, e.g., \cite{PesinSenti2008}.}  on $E$ by
$ r_E(x)=\min\{n\geq 1: f^n(x)\in E\}$, 
where $\min \emptyset=+\infty$. Let $E^\infty$ denote the set of $x\in E$ such that $f^k(x)\in E$ for infinitely many $k\in \bN$. The set $E^\infty$ is measurable, and, by Poincar\'e's Recurrence Theorem, we have $\mu(E^\infty)=\mu(E)$.  
The {\em induced map} $f_E:E^\infty\to E^\infty$ is defined by
\begin{equation}\label{deffE}
f_E(x)\eqdef f^{r_E(x)}(x).
\end{equation}
It is straight-forward to verify that the following measure is an $f_E$-invariant probability measure:
\begin{equation}\label{defindmeas}
\mu_E(F)\eqdef \mu(F|E)=\frac{\mu(F\cap E)}{\mu(E)}.
\end{equation}
When $E$ is clear from context, we write $i(\mu)=\mu_E$. We use the notation $ \cM_{f_E} $ and $\cM_{{\rm erg},f_E}$ for the  Borel $f_E$-invariant and ergodic probability measures on $E$, respectively, even though, in general, $f_E$ is not continuous. An important property due to Kac \cite{Kac1947} is that if $\mu$ is ergodic, then $i(\mu)$ is ergodic, and 
\begin{equation}\label{eqKac}
\int_X \phi\, d\mu=\int_E\left(\sum_{k=0}^{r_E(x)-1}\phi\circ f^k(x)\right) d\mu
\end{equation}
for all $\mu$-integrable functions $\phi$. Moreover,
$ \int_E r_E\, d i(\mu)=1/\mu(E).$ 
Abramov's theorem \cite{Abramov1959} relates the entropy of the return map with the entropy of the original map as 
\begin{equation}\label{eq:abramov}
h_{i(\mu)}(f_E)=\frac{h_\mu(f)}{\mu(E)}.
\end{equation}
The following proposition provides the tool to reconstruct an $f$-invariant measure  from an $f_E$-invariant measure, see, e.g., \cite[Proposition 1.1]{Zweimuller2005}. Here, we only consider  the special case of probability measures. 

\begin{proposition}\label{propAa} 
Let $E\subset X$ be measurable and let $\nu\in \cM_{f_E}$ with $\int_E r_E\, d\nu<\infty$. Then $\widetilde{\nu}$, defined by 
\begin{equation*}%\label{eqaad}
\widetilde{\nu}(B)\eqdef \sum_{k=0}^\infty \nu \left(\{x: r_E(x)>k\}\cap f^{-k}(B)\right),
\end{equation*}
is a finite  $f$-invariant measure with $\widetilde{\nu}(X)= \int_E r_E\, d\nu$. Moreover, $l(\nu)\eqdef \frac{1}{\widetilde{\nu}(X)}\, \widetilde{\nu}\in \cM_f$ with $i(l(\nu))=\nu$. Furthermore,  if $\nu$  is ergodic, then $l(\nu)$ is ergodic.
\end{proposition}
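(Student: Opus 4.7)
The plan is to establish the four claims of Proposition~\ref{propAa} in sequence: finiteness with the stated total mass, $f$-invariance of $\widetilde{\nu}$, the identity $i(l(\nu))=\nu$, and ergodicity. Throughout, the main technical tools are Tonelli's theorem and the partition of $E^\infty$ into the level sets $\{r_E=j\}$ of the first-return function. For the total mass, I would apply the definition to $B=X$, use the pointwise identity $r_E(x)=\sum_{k=0}^{\infty}\mathbbm{1}_{\{r_E>k\}}(x)$ valid on $E^\infty$, and conclude $\widetilde{\nu}(X)=\int_E r_E\, d\nu<\infty$ directly from the hypothesis; in particular $\widetilde{\nu}$ is finite and $l(\nu)$ is a well-defined probability measure.

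For $f$-invariance, I would compute $\widetilde{\nu}(f^{-1}B)$ by the substitution $j=k+1$, then split $\{r_E\geq j\}=\{r_E>j\}\sqcup\{r_E=j\}$. The sum over the $\{r_E>j\}$-pieces reproduces $\widetilde{\nu}(B)-\nu(B\cap E)$ (accounting for the missing $k=0$ term, which is $\nu(E\cap B)$). The sum over the $\{r_E=j\}$-pieces uses the key identity $f^j(x)=f_E(x)$ on $\{r_E=j\}$ from \eqref{deffE}, so after collecting over all $j\geq 1$ it equals $\nu(f_E^{-1}B)=\nu(B\cap E)$ by $f_E$-invariance of $\nu$, together with the fact that $\nu$ is concentrated on $E^\infty\subset E$. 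The two contributions add to $\widetilde{\nu}(B)$, proving invariance. This reindexing and telescoping is the step I expect to be the main obstacle, since it requires careful bookkeeping of the first-return decomposition and a correct identification of what happens at the $k=0$ boundary term.

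For $i(l(\nu))=\nu$, I would observe that for each $k\geq 1$, the condition $r_E(x)>k$ forces $f^k(x)\notin E$, so the sets $\{r_E>k\}\cap f^{-k}(B\cap E)$ are empty for $k\geq 1$; only the $k=0$ term contributes to $\widetilde{\nu}(B\cap E)$, yielding $\widetilde{\nu}(B\cap E)=\nu(B)$. Dividing by $\widetilde{\nu}(X)$ and then by $l(\nu)(E)=1/\widetilde{\nu}(X)$ yields $i(l(\nu))(B)=\nu(B)$, as required. Finally, for ergodicity, suppose $B=f^{-1}B$; then $B\cap E^\infty$ is $f_E$-invariant since $f_E(x)=f^{r_E(x)}(x)\in B$ whenever $x\in B$, by $f$-invariance of $B$. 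Conversely, each set $\{r_E>k\}\cap f^{-k}(B)$ is contained in $B\cap E^\infty$ because $f^k(x)\in B$ forces $x\in B$. Ergodicity of $\nu$ then gives $\nu(B\cap E^\infty)\in\{0,1\}$, and the summation formula for $\widetilde{\nu}(B)$ immediately propagates this dichotomy to $l(\nu)(B)$, completing the proof.
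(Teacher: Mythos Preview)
The paper does not give its own proof of Proposition~\ref{propAa}; it states the result and cites \cite[Proposition~1.1]{Zweimuller2005}. Your argument is the standard Kakutani-tower proof and is correct: the total-mass computation via $r_E=\sum_{k\ge 0}\mathbbm{1}_{\{r_E>k\}}$, the reindexing $j=k+1$ together with the splitting $\{r_E\ge j\}=\{r_E>j\}\sqcup\{r_E=j\}$ and the identity $f^j=f_E$ on $\{r_E=j\}$ for invariance, the observation that only the $k=0$ term survives in $\widetilde{\nu}(B\cap E)$ for the conditioning identity, and the propagation of the $0$--$1$ dichotomy through the tower for ergodicity are all sound. Two small cosmetic points: the word ``Conversely'' before the containment $\{r_E>k\}\cap f^{-k}(B)\subset B\cap E^\infty$ is misleading (it is not a converse but an additional observation), and in the ergodicity step it is cleaner to note directly that $f^{-k}(B)=B$ so that $\widetilde{\nu}(B)=\sum_{k}\nu(\{r_E>k\}\cap B)$ and likewise for $X\setminus B$.
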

When the hypotheses of Proposition \ref{propAa} hold, we call $l(\nu)$ the lift of $\nu$ and denote the set of liftable  $f_E$-invariant measures by 
\[\cM_{f_E, {\rm lift}}=\left\{\nu\in \cM_{f_E}: \int r_E\, d\nu<\infty\right\}.\]

We briefly review relevant material from the thermodynamic formalism for countable shift spaces following the monograph by Mauldin and Urbanski \cite{MauldinEtAl2003}. For related results, we refer the reader to  Sarig \cite{Sa99}.
We note that  \cite{MauldinEtAl2003} treats the case of one-sided 
countable shift spaces. Here we present analogous results in the two-sided  case. The proofs in the two-sided setting are entirely analogous. Furthermore, we only consider  the  full shift  case which is sufficient for our purposes.  

Let $Y=Y^{\pm}_\bN$ denote the shift space of bi-infinite sequences $(y_k)_{k\in \bZ}$ in the  countable alphabet $\bN$. We endow $Y$  with the analog of the $\theta$-metric (see Equation \eqref{defmetX}) which makes $Y$ into a non-compact Polish space.
Let $g:Y\to Y$ denote the (left) shift on $Y.$ We continue to use the notation from Section 2.1 for  the countable shift space. Given a continuous function $\phi:Y\to \bR$, the topological pressure $P_{\rm top}(\phi)=  P_{\rm top}(Y,\phi)$ of $\phi$ is defined as in the case of finite-alphabet shift spaces, see Equations \eqref{defnpart}, \eqref{eq:Sn} and \eqref{eqn:def:P}.  Given $m\in \bN$, we consider $Y_m=\{(y_k)_{k\in \bZ}: y_k\in \{1,\dots,m\}\}$ as a (compact) subshift of $Y$. We say a function $\phi:Y\to \bR$ is acceptable if
it is uniformly continuous and has finite oscillation, i.e., 
\[
{\rm osc}(\phi)=\sup\left\{\sup(\phi\vert_{[e]})-\inf(\phi\vert_{[e]}): e\in \bN\right\}<\infty.
\]
We note that an acceptable function does not need to be bounded.
We denote the set of all Borel $g$-invariant and ergodic probability measures on $Y$ by $\cM_g$ and $\cM_{{\rm erg},g}$, respectively. 
\begin{theorem}[see \cite{MauldinEtAl2003}]\label{countUM}
Let $\phi:Y\to\bR$ be acceptable. Then 
\begin{equation}\label{varpricount}
P_{\rm top}(\phi)=\sup
\left\{h_\mu(g)+\int \phi\, d\mu: \mu\in \cM_g, \int \phi \, d\mu>-\infty\right\}=\sup\left\{P_{\rm top}(Y_m,\phi): m\in \bN\right\}.
\end{equation}
Moreover, the statement remains true if  the first supremum is  taken only over all Borel $g$-invariant ergodic  probability measures.
\end{theorem}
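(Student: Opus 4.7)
The plan is to adapt Mauldin and Urbanski's proof for the one-sided case by establishing the chain of inequalities $A \leq B \leq C \leq A$, where
\begin{equation*}
A = \sup_m \Ptop(Y_m,\phi),\quad B = \sup\{h_\mu(g) + \textstyle\int \phi\, d\mu : \mu \in \cM_g,\ \int \phi\, d\mu > -\infty\},\quad C = \Ptop(Y,\phi).
\end{equation*}
The passage from the one-sided to the two-sided setting does not introduce any real difficulty: cylinder sets, admissible words, and invariant measures all admit literal two-sided analogs, and one may invoke the natural extension (or project) to transfer the one-sided statements. I would also briefly observe that the two-sided partition function from \eqref{defnpart} coincides with its one-sided counterpart up to a factor that is subexponential in $n$ by uniform continuity of $\phi$, so that the two pressures agree.

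First I would show $A \leq B$. Each $Y_m$ is a compact shift space with finite alphabet $\{1,\dots,m\}$, so the classical variational principle for compact subshifts gives $\Ptop(Y_m,\phi) = \sup_{\mu \in \cM_{Y_m}}(h_\mu(g) + \int \phi\, d\mu)$. Every such $\mu$ extends by zero to a $g$-invariant Borel probability measure on $Y$, supported in $Y_m$, with the same free energy; the integral $\int \phi\, d\mu$ is finite since $\phi$ is continuous on the compact set $Y_m$. Taking $\sup_m$ yields $A \leq B$. Next I would show $B \leq C$. For $\mu \in \cM_g$ with $\int \phi\, d\mu > -\infty$, a Misiurewicz-type argument applied to the countable generating partition by one-step cylinders gives $h_\mu(g) + \int \phi\, d\mu \leq \limsup_n \tfrac{1}{n} \log Z_n(\phi)$; finite oscillation lets one replace $\int_{[\tau]} S_n\phi\, d\mu$ by $(\sup_{[\tau]}S_n\phi)\cdot \mu([\tau])$ with controlled error, while uniform continuity guarantees that the Shannon--McMillan--Breiman theorem applies to this countable partition as in Mauldin--Urbanski.

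The hard step is $C \leq A$, and it is the main obstacle. Setting $a_{m,n} = \tfrac{1}{n}\log Z_n(Y_m,\phi)$, one has $A = \sup_m \inf_n a_{m,n}$ while $C = \inf_n \sup_m a_{m,n}$; the minimax inequality only gives $A \leq C$, and swapping these extrema requires a genuine input from the acceptability hypothesis. My strategy is to extract a uniform-in-$n$ tail estimate: fix $\epsilon > 0$ and use acceptability to choose $m$ so large that the contribution to $Z_n(\phi)$ coming from words containing a symbol $> m$ is dominated by the contribution from words over $\{1,\dots,m\}$ to within a factor of $(1+\epsilon)^n$. The essential inequality for this uses
\begin{equation*}
\sup_{x\in[\tau]} S_n\phi(x) \leq \sum_{k=0}^{n-1} \sup_{[\tau_k]}\phi + n\cdot\mathrm{osc}(\phi),
\end{equation*}
which reduces the tail analysis to control of the single-symbol tail $\sum_{i>m} \exp(\sup_{[i]}\phi)$; finite oscillation of $\phi$ is what makes this finite and lets the estimate be made uniform in $n$. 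Consequently $\Ptop(\phi) \leq \Ptop(Y_m,\phi) + \log(1+\epsilon)$ for $m$ large, and $\epsilon \to 0$ closes the argument.

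Finally, the restriction of the first supremum to ergodic measures follows by ergodic decomposition. Given any $\mu \in \cM_g$ with $\int \phi\, d\mu > -\infty$, almost every ergodic component $\mu_\omega$ still satisfies $\int \phi\, d\mu_\omega > -\infty$, and affinity of $\nu \mapsto h_\nu(g) + \int \phi\, d\nu$ on this set guarantees that some ergodic component realizes the free energy of $\mu$ to within any prescribed $\epsilon$. Combined with $B = C = A$, this gives the last assertion of the theorem.
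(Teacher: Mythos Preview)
The paper does not itself prove this result; it is quoted from Mauldin--Urba\'nski with the remark that the two-sided proof is analogous. Your chain $A\le B\le C$ is fine and matches their scheme. The problem is the closing step $C\le A$. Your single-symbol decoupling $\sup_{[\tau]}S_n\phi\le\sum_k\sup_{[\tau_k]}\phi$ bounds $Z_n(Y,\phi)\le(\overline{Z}_1)^n$, but the matching \emph{lower} bound on $Z_n(Y_m,\phi)$ via products costs a factor $e^{-n\,{\rm osc}(\phi)}$, so the comparison only yields $P_{\rm top}(\phi)\le\sup_mP_{\rm top}(Y_m,\phi)+{\rm osc}(\phi)$, and ${\rm osc}(\phi)$ is a fixed positive number that no choice of $m$ removes. (Also, finite oscillation is not what makes the tail $\sum_{i>m}e^{\sup_{[i]}\phi}$ finite---that requires $Z_1(\phi)<\infty$, i.e.\ finite pressure, which must be handled as a separate case.) The min--max obstruction you correctly flag as the ``main obstacle'' is real, and your single-symbol decoupling is too coarse to overcome it.

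The mechanism that actually works passes to the \emph{infimum} partition function $\underline{Z}_n(\phi)=\sum_{|\tau|=n}\exp\bigl(\inf_{[\tau]}S_n\phi\bigr)$. Uniform continuity gives $\overline{Z}_n/\underline{Z}_n\le e^{V_n}$ with $V_n=o(n)$, so both define the same pressure; but on the full shift $\underline{Z}_n$ is \emph{super}multiplicative, whence $P_{\rm top}(\phi)=\sup_n\tfrac{1}{n}\log\underline{Z}_n(\phi)$ by Fekete. Since $\inf_{[\tau]_Y}S_n\phi\le\inf_{[\tau]_{Y_m}}S_n\phi$ for $\tau\in\{1,\dots,m\}^n$, letting $m\to\infty$ with $n$ fixed gives $\underline{Z}_n(Y,\phi)\le\sup_m\underline{Z}_n(Y_m,\phi)\le\sup_m e^{nP_{\rm top}(Y_m,\phi)}$. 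Now both outer operations are suprema and commute, delivering $C\le A$ with no residual oscillation term.
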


As in the case of finite-alphabet shift spaces, we refer to the left-hand side  identity in Equation \eqref{varpricount} as the variational principle for the countable full shift. We say that $\phi:Y\to \bR$ is constant on centered $k$-cylinders
(and write $\phi\in LC_k(Y,\bR)$) if $\phi\vert_{[y]_{-k}^k}$ is constant for all $y\in Y$. We denote by $LC(Y,\bR)=\bigcup_k LC_k(Y,\bR)$ the set of locally constant potentials on $Y$. Evidently, each locally constant potential
is uniformly continuous.

 The following result is the main tool to establish the lower computability of the topological pressure for coded shifts. It is  also of independent interest outside the area of computability.

\begin{proposition}\label{lem:case1}
Let $f:X\to X$ be a coded shift with unique representation $\cG=\{g_1,g_2,\dots\}$. 
Let $\phi\in LC(X,\bR)$ and suppose there exists $\mu_\phi\in ES(\phi)$ with $\mu(X_{\rm seq})=1$. Then $
  P_{\rm top}(\phi)= P_{\rm seq}(\phi)=\lim_{m\to\infty} \Ptop(X_m,\phi).$
\end{proposition}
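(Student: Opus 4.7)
The equality $P_{\rm seq}(\phi)=P_{\rm top}(\phi)$ is immediate: $P_{\rm seq}(\phi)\le P_{\rm top}(\phi)$ by the variational principle, and $P_{\rm seq}(\phi)\ge P_{\mu_\phi}(\phi)=P_{\rm top}(\phi)$ since $\mu_\phi$ is admissible in the supremum defining $P_{\rm seq}(\phi)$. Because $X_m\subset X$ trivially gives $P_{\rm top}(X_m,\phi)\le P_{\rm top}(\phi)=:\alpha$, the real content is the lower bound $\liminf_m P_{\rm top}(X_m,\phi)\ge\alpha$. My plan is to induce on a natural ``word-boundary'' subset of $X_{\rm seq}$, transfer the calculation to the Mauldin--Urbanski countable full shift $Y=\bN^{\bZ}$, and apply Theorem~\ref{countUM} to approximate the pressure on $Y$ by the pressures on the finite-alphabet subshifts $Y_m$, which will correspond to $X_m$.

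Using the ergodic decomposition together with the $f$-invariance of $X_{\rm seq}$, I may replace $\mu_\phi$ by an ergodic equilibrium state $\mu$ with $\mu(X_{\rm seq})=1$. Let $E\subset X_{\rm seq}$ be the set of $x$ whose coordinate $0$ begins a generator in the \emph{unique} representation by $\cG$; $f$-invariance forces $\mu(E)>0$. Unique representability yields a measurable bijection $\pi:Y\to E$ conjugating the shift $g$ on $Y$ with the first-return map $f_E$, sending the return time to $r(y):=|g_{y_0}|$ and pulling $S_{r_E}\phi$ back to
\[
\tilde\phi(y):=\sum_{j=0}^{r(y)-1}\phi\bigl(f^j\pi(y)\bigr).
\]
Since $\phi\in LC_k(X,\bR)$ and every generator has length at least one, all coordinates of $\pi(y)$ in the window $[-k,\,r(y)-1+k]$ are determined by $y_{-k},\dots,y_k$; hence $\tilde\phi\in LC_k(Y,\bR)$, and as $r$ depends only on $y_0$, the potential $\psi_\beta:=\tilde\phi-\beta r$ is locally constant on $Y$ for every $\beta\in\bR$. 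In particular $\psi_\beta$ is acceptable in the sense of Theorem~\ref{countUM}, with oscillation on each one-cylinder of $Y$ bounded by $4k\|\phi\|_\infty$.

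Let $\nu$ be the ergodic $g$-invariant measure on $Y$ corresponding to $i(\mu)$ on $E$. Kac's identity and Abramov's theorem give $\int r\,d\nu=1/\mu(E)<\infty$, $h_\mu(f)=\mu(E)h_\nu(g)$, and $\int\phi\,d\mu=\mu(E)\int\tilde\phi\,d\nu$, so combining with $P_\mu(\phi)=\alpha$ yields
\[
0 \;=\; P_\mu(\phi)-\alpha \;=\; \mu(E)\Bigl(h_\nu(g)+\textstyle\int\tilde\phi\,d\nu-\alpha\int r\,d\nu\Bigr) \;=\; \mu(E)\,P_\nu(\psi_\alpha,g),
\]
hence $P_\nu(\psi_\alpha)=0$. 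For any $\alpha'<\alpha$ one then has $P_\nu(\psi_{\alpha'})=(\alpha-\alpha')/\mu(E)>0$, so the variational principle on $Y$ (valid since $\int\psi_{\alpha'}\,d\nu$ is finite) gives $P_{\rm top}(Y,\psi_{\alpha'})>0$. Theorem~\ref{countUM} then provides $P_{\rm top}(Y_m,\psi_{\alpha'})>0$ for all sufficiently large $m$; on the compact shift $Y_m$ pick an equilibrium state $\nu_m$ for $\psi_{\alpha'}|_{Y_m}$. Since $r\le\max_{j\le m}|g_j|$ on $Y_m$, $\nu_m$ is liftable via Proposition~\ref{propAa}, and its lift $l(\nu_m)$ is $f$-invariant and supported on concatenations of $g_1,\dots,g_m$, hence on $X_m$. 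Running the Abramov--Kac identity in reverse converts $P_{\nu_m}(\psi_{\alpha'})>0$ into $P_{l(\nu_m)}(\phi)>\alpha'$, so $P_{\rm top}(X_m,\phi)>\alpha'$ for all large $m$; letting $\alpha'\nearrow\alpha$ finishes the proof. The main obstacle I anticipate is the careful verification that $\tilde\phi-\beta r$ is acceptable on the non-compact shift $Y$---this is what licenses the use of Theorem~\ref{countUM} and forces the generator-boundary bookkeeping above; once that is in place, the induce-and-lift scheme is routine.
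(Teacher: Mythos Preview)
Your proof is correct and follows essentially the same approach as the paper: induce on the set $E$ of generator boundaries, conjugate $f_E$ to the countable full shift $Y$, verify acceptability of the induced potential, apply Theorem~\ref{countUM}, and lift equilibrium states on $Y_m$ back to $X_m$ via Proposition~\ref{propAa}. The only cosmetic difference is that the paper normalizes to $P_{\rm top}(\phi)=0$ and works directly with $\tilde\phi$, whereas you parametrize by $\psi_\beta=\tilde\phi-\beta r$ and let $\alpha'\nearrow\alpha$, thereby sidestepping the paper's separate verification that $P_{\rm top}(Y,\tilde\phi)\le 0$; the two endgames are equivalent.
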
 
\begin{proof}
The identity $P_{\rm top}(\phi)= P_{\rm seq}(\phi)$ is trivial since  $P_{\rm top}(\phi)= P_{\mu_\phi}(\phi)=P_{\rm seq}(\phi)$.
Since $\phi$ is locally constant, there exists $k\in \bN_0$ such that  $\phi$ is constant on cylinders $[x]_{-k}^k$  for all $x\in X$, i.e., $\phi\in LC_k(X,\bR)$.  By applying an ergodic decomposition argument, we may assume that $\mu_\phi$  is ergodic.
We note that $P_{\rm top}(\phi+c)=P_{\rm top}(\phi)+c$ and $P_{\rm top}(X_m,\phi+c)=P_{\rm top}(X_m,\phi)+c$  for all $c\in \bR$. Therefore, by replacing the potential $\phi$ with $\phi-P_{\rm top}(\phi)$, it suffices to prove
the statement for the case $P_{\rm top}(\phi)=0$. To obtain this reduction, we also use the fact that $ES(\phi)=ES(\phi-P_{\rm top}(\phi))$. For the remainder of this proof, we assume $P_{\rm top}(\phi)=0$.

  If $\mu_\phi(X_{\rm fin})=1$, then, since the sets $X_m$ and $X_{m+1}\setminus X_m$ are $f$-invariant and since $\mu_\phi$ is ergodic, there exists $m\in \bN$ with $\mu(X_{m})=1$, in which case the assertion holds.

Since  $X_{\rm seq}\setminus X_{\rm fin}$ is $f$-invariant it remains to consider the case $\mu_\phi(X_{\rm seq}\setminus X_{\rm fin})=1$. We define 
 \begin{equation*}
E\eqdef \{\cdots g_{i_{-2}}g_{i_{-1}}.g_{i_0}g_{i_1}g_{i_2}\cdots: g_{i_j}\in \cG\}.
\end{equation*}
In other words, $E$ is the set of all infinite sequences in the alphabet of generators $\cG$ for which a generator begins at index $0$.
Alternatively, $E$ is the set of all $x=\cdots x_{-2}x_{-1}x_0 x_1x_2\cdots \in X_{\rm seq}$ for which $x_{-1}$ is the  terminal element of the generator $g_{i_{-1}}=g_{i_{-1}}(x)$ and $x_0$ is the initial element of the generator $g_{i_0}=g_{i_0}(x)$.  
Since $X$ is uniquely representable by \cite[Corollary 8]{BPR21}, $r_E(x)=|g_{i_0}(x)|$ is well-defined.  We observe that every $x\in E$ has finite return time.  Let $f_E:E\to E$ be defined as in Equation \eqref{deffE}. Since $\bigcup_{n\geq 0}f^{-n}(E)=X_{\rm seq}$, it follows that $\mu(E)>0$ for all $\mu\in \cM$ with $\mu(X_{\rm seq})>0$.  
In particular, $\mu_\phi(E)>0$.
We define 
\begin{equation*}%\label{defphiE}
\phi_E : E\to \bR \quad\text{where}\quad \phi_E(x) = \sum_{j=0}^{r_E(x)-1}\phi\circ f^j(x).
\end{equation*}

Let $\mu\in \cM$ with $\mu(X_{\rm seq})>0$.
Definition \eqref{defindmeas} and Kac's formula (Equation~\eqref{eqKac}) imply that  $\int_E \phi_{E}\, d\, i(\mu)=\frac{\int \phi\, d\mu}{\mu(E)}$.  This, when combined with Abramov's Theorem (Equation~\eqref{eq:abramov}), shows that
\begin{equation}\label{eq24}
P_{i(\mu)}(\phi_E)=\frac{P_{\mu}(\phi)}{\mu(E)}.
\end{equation}

Next, we construct a countable shift map with alphabet $\cG$ that is conjugate to $f_E$. Let $Y=\cG^{\bZ}$ and let $g:Y\to Y$ denote the shift map.  We define $h: E\to Y$ by replacing the word of each generator by its symbol in the alphabet of generators $\cG$, i.e.,
 \begin{equation*}
 h(x)=\dots g_{-2}(x)g_{-1}(x).g_0(x) g_1(x) g_2(x)\cdots .
 \end{equation*}
 Since $\cG$ is a unique representation of $X$, the following diagram commutes:
 \begin{center}
 \begin{tikzpicture}
 \node (A) at (0,0) {$Y$};
\node (B) at (2,0) {$Y$};
\node (C) at (0,2) {$E$};
\node (D) at (2,2) {$E$};
\draw[->] (A) edge node[above] {$g$} (B);
\draw[->] (C) edge node[left] {$h$} (A);
\draw[->] (D) edge node[right] {$h$} (B);
\draw[->] (C) edge node[above] {$f_E$} (D);
 \end{tikzpicture}.
 \end{center}
 For each $\eta\in \cM_{f_E}$, there exists a unique measure-theoretically isomorphic measure $\widetilde{\eta}\in \cM_g$ defined by $\widetilde{\eta}(A)=\eta(h^{-1}(A))$.   Hence
  \begin{equation}\label{eqee6}
 h_{\widetilde{\eta}}(g)=h_\eta(f_E).
 \end{equation}
 We write $\widetilde{\mu}\eqdef \widetilde{i(\mu)}\in \cM_g$ and define $\widetilde{\phi}\eqdef \phi_E \circ h^{-1}:Y\to \bR$. Thus,
 \begin{equation}\label{eq29}
 \int \widetilde{\phi} \, d\, \widetilde{\mu} = \int \phi_E\, d i(\mu).
 \end{equation} 
 Combining Equations \eqref{eq24}, \eqref{eqee6}, and \eqref{eq29} yields
 \begin{equation}\label{eq19}
 P_{\widetilde{\mu}}\left(\widetilde{\phi}\right)=\frac{P_{\mu}(\phi)}{\mu(E)}.
 \end{equation}
 
 Next we show that $\widetilde{\phi}$ is acceptable. We observe that $\widetilde{\phi}\in LC_k(Y,\bR)$. This follows from $\phi\in LC_k(X,\bR)$, the definitions of $\phi_E$ and $\widetilde{\phi}$, and  the fact that each generator in $\cG$ has length at least one.
 Therefore, $\widetilde{\phi}$ is uniformly continuous. Next we estimate the oscillation of $\widetilde{\phi}$. Define $c=\sup \phi -\inf \phi <\infty$, and let $e\in \cG$. If $|e|\leq 2k$, then, by the definition of $\phi_E$, $\sup\left(\widetilde{\phi}\vert_{[e]}\right)-\inf\left(\widetilde{\phi}\vert_{[e]}\right)\leq 2k c$. Next we consider the case $|e|> 2k$. Let $y=\cdots g_{-2}g_{-1}.e g_1 g_2\cdots \in [e]$ and $x=h^{-1}(y)=\cdots x_{-2}x_{-1}.x_0 x_1x_2\cdots \in X_{\rm seq}$. In particular, $e=x_0 \dots x_{|e|-1}$.
 Then
 \begin{equation}\label{eqsumacc}
 \widetilde{\phi}(y)= \sum_{j=0}^{k-1}\phi\circ f^j(x) +  \sum_{j=k}^{|e|-k-1}\phi\circ f^j(x) +  \sum_{j=|e|-k}^{|e|-1}\phi\circ f^j(x).
 \end{equation}
 We observe that $\phi\in LC_k(X,\bR)$ implies that $\sum_{j=k}^{|e|-k-1}\phi\circ f^j(x)$ is  independent of $x\in [e]$. Thus Equation \eqref{eqsumacc} implies that
  $\sup\left(\widetilde{\phi}\vert_{[e]}\right)-\inf\left(\widetilde{\phi}\vert_{[e]}\right)\leq 2k c$. Hence the oscillation is finite and  $\widetilde{\phi}$ is acceptable.
  Thus, Theorem \ref{countUM} yields
  \begin{equation}\label{eqgut}
  P_{\rm top}\left(\widetilde{\phi}\right)=\sup\left\{P_{\rm}\left(Y_m,\widetilde{\phi}\right): m\in \bN\right\}=\lim_{m\to\infty}P_{\rm}\left(Y_m,\widetilde{\phi}\right).
  \end{equation}
  Next we prove that $P_{\rm top}\left(\widetilde{\phi}\right)=0$. Since $P_{\rm top}(\phi)=\P_{\mu_\phi}(\phi)=0$, Equation \eqref{eq19} applied to $\mu_\phi$ shows that $P_{\widetilde{\mu}_\phi}\left(\widetilde{\phi}\right)=0$.  
  Thus, $P_{\rm top}\left(\widetilde{\phi}\right)\geq 0$ follows from the variational principle for the countable full shift Equation \eqref{varpricount}. To establish $P_{\rm top}\left(\widetilde{\phi}\right)\leq 0$,
  it suffices to show that $P_{\rm top}\left(Y_m,\widetilde{\phi}\right)\leq 0$ for all $m\in\bN$, by   Equation \eqref{eqgut}.  For fixed $m\in \bN$, let $\widetilde{\eta}_m=\widetilde{\eta}_{m,\widetilde{\phi}}$ be an ergodic equilibrium measure of $\widetilde{\phi}\vert_{Y_m}$. In fact, $\widetilde{\eta}_m$ is the unique
  equilibrium state of $\widetilde{\phi}\vert_{Y_m}$ since $Y_m$ is a transitive Sofic shift and  $\widetilde{\phi}\vert_{Y_m}$ is H\"older continuous, see \cite{Yoo2018}. Let $\eta_m\in \cM_{f_E}$ be defined by $\eta_m(A)=\widetilde{\eta}_m(h(A))$. 
  In particular, $\eta_m$ and $\widetilde{\eta}_m$ are measure-theoretically isomorphic.
  Hence $P_{\eta_m}(\phi_E\vert_{X_m\cap E})= P_{\rm top}\left(Y_m,\widetilde{\phi}\right).$ Since $\cG_m$ is finite, $\int r_E \,d\eta_m<\infty$ and thus $\eta_m\in \cM_{f_E, {\rm lift}}$. We define $\mu_m=l(\eta_m)$, that is, $\mu_m$ is the lift of $\eta_m$ to $X_m$. Since
 \begin{equation}\label{eq2222}
 P_{\mu_m}(\phi)\leq P_{\rm top}(X_m,\phi)\leq P_{\rm top}(\phi)=0,
 \end{equation}
 and
 \begin{equation}\label{gqrett}
 P_{\eta_m}(\phi_E)=\frac{P_{\mu_m}(\phi)}{\mu_m(E)}
 \end{equation}
 (see Equation \eqref{eq24}), we conclude that $P_{\eta_m}(\phi_E)=P_{\widetilde{\eta}_m}\left(\widetilde{\phi}\right)\leq 0$. This completes the proof of $P_{\rm top}\left(\widetilde{\phi}\right)= 0$. Combining this identity with 
 Equation \eqref{eqgut}  yields
 \begin{equation}\label{eqgutter}
 \lim_{m\to\infty} P_{\widetilde{\eta}_m}\left(\widetilde{\phi}\right)=\lim_{m\to\infty}P_{\rm}\left(Y_m,\widetilde{\phi}\right)=0
 \end{equation}
 Finally, by combining Equations \eqref{eq2222}, \eqref{gqrett}, and \eqref{eqgutter} with $P_{\eta_m}(\phi_E\vert_{X_m\cap E})= P_{\rm top}\left(Y_m,\widetilde{\phi}\right)$, we conclude that $\lim_{m\to\infty} P_{\rm top}(X_m,\phi)=0=P_{\rm top}(\phi).$
 \end{proof}
 
\begin{theorem}\label{thmalmsofpres} Let $f:X\to X$ be a coded shift with unique representation $\cG=\{g_1,g_2,\dots\}$.  Suppose that $\phi\in FSSP(X,\bR)$, then
  $
  P_{\rm top}(\phi)=\lim_{m\to\infty} \Ptop(X_m,\phi).$
  \end{theorem}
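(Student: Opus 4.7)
The plan is to bootstrap Proposition~\ref{lem:case1}, which handles locally constant potentials admitting an equilibrium state concentrated on $X_{\rm seq}$, to all of $FSSP(X)$ via two approximation steps.

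First, I would show that every potential with strict full sequential pressure admits an equilibrium state concentrated on $X_{\rm seq}$, so that Proposition~\ref{lem:case1} applies. Since $f$ is expansive, the entropy map is upper semi-continuous and equilibrium states exist. Because $X_{\rm seq}$ and $X_{\rm lim}=X\setminus X_{\rm seq}$ are disjoint $f$-invariant Borel sets, every $\mu\in\cM_X$ decomposes as $\mu=\alpha\mu_{\rm seq}+(1-\alpha)\mu_{\rm lim}$ with $\mu_{\rm seq}(X_{\rm seq})=\mu_{\rm lim}(X_{\rm lim})=1$ and $\alpha=\mu(X_{\rm seq})$. By affinity of entropy under ergodic decomposition and linearity of integration, $P_\mu(\phi)\leq\alpha P_{\rm seq}(\phi)+(1-\alpha)P_{\rm lim}(\phi)$, from which $\Ptop(\phi)=\max\{P_{\rm seq}(\phi),P_{\rm lim}(\phi)\}$ follows. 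Under the strict inequality $P_{\rm seq}(\phi)>P_{\rm lim}(\phi)$, any equilibrium state is forced to have $\alpha=1$.

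Second, I would establish density. Both $P_{\rm seq}$ and $P_{\rm lim}$ are $1$-Lipschitz on $C(X,\bR)$ since they are suprema of the $1$-Lipschitz functionals $\phi\mapsto h_\mu(f)+\mu(\phi)$, so the strict full sequential pressure condition is open in $C(X,\bR)$. By a standard Stone--Weierstrass argument, locally constant potentials are dense in $C(X,\bR)$, so any continuous strict FSP potential is a uniform limit of locally constant strict FSP potentials. For each such locally constant approximant, the first step combined with Proposition~\ref{lem:case1} yields the desired limit identity.

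Third, I would show the identity persists under uniform limits. Since $X_m\subset X_{m+1}$, the quantity $\Ptop(X_m,\psi)$ is nondecreasing in $m$ and bounded above by $\Ptop(\psi)$, so $\lim_m\Ptop(X_m,\psi)=\sup_m\Ptop(X_m,\psi)$ exists. Each $\Ptop(X_m,\cdot)$ is $1$-Lipschitz in the supremum norm, hence so is the supremum; $\Ptop(\cdot)$ is $1$-Lipschitz as well. Consequently the set
\[
S=\{\psi\in C(X,\bR):\Ptop(\psi)=\lim_{m\to\infty}\Ptop(X_m,\psi)\}
\]
is closed in $C(X,\bR)$. By the previous two steps, $S$ contains every locally constant strict FSP potential, hence by closedness contains every continuous strict FSP potential and, by the very definition of $FSSP(X)$ as the closure of the strict FSP potentials, all of $FSSP(X)$. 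The only genuinely substantive step is the ergodic decomposition of paragraph one; the rest is routine $1$-Lipschitz bookkeeping.
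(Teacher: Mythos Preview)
Your proposal is correct and follows essentially the same approach as the paper: both arguments use the $1$-Lipschitz continuity of the pressure functionals, the density of locally constant potentials, and the openness of the strict full sequential pressure condition to reduce to Proposition~\ref{lem:case1}. The paper carries this out via a direct $\epsilon/3$ triangle-inequality estimate, while you package the same content as ``the set $S$ where the identity holds is closed and contains a dense subset of $FSSP(X)$''; you also spell out the decomposition argument showing that strict FSP forces equilibrium states onto $X_{\rm seq}$, which the paper asserts in one line.
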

 \begin{proof} 
 Let $\phi\in FSSP(X,\bR)$. We observe that for fixed $\eta\in \cM$,  the map $\varphi\mapsto P_\eta(\varphi)$ is $1$-Lipschitz continuous on $C(X,\bR)$ with respect to the  supremum norm. This implies that the 
 map $\varphi \mapsto P_{\rm top}(\varphi)$ is also $1$-Lipschitz continuous on $C(X,\bR)$ with respect to the  supremum norm. Fix $\epsilon>0$. By the definition of $FSSP(X,\bR)$ and since $LC(X,\bR)$ is dense
 in  $C(X,\bR)$, there exists $\varphi\in LC(X,\bR)$  with $\Vert \phi-\varphi\Vert_{\infty}<\frac{\epsilon}{3}$ and $P_{\rm seq}(\varphi)>P_{\rm lim}(\varphi)$. This shows that any equilibrium state $\mu$ of $\varphi$ satisfies $\mu(X_{\rm seq})=1$. Since
  $\varphi$ has at least one equilibrium state, by applying Proposition \ref{lem:case1}  to $\varphi$,  there exists $m\in \bN$  such that $\vert P_{\rm top}(X_m,\varphi)-P_{\rm top}(\varphi)\vert <\frac{\epsilon}{3}$.  Putting these statements together we conclude that
 \begin{align*}
 \vert P_{\rm top}(\phi)&-P_{\rm top}(X_m,\phi)\vert\\
 &< \vert P_{\rm top}(\phi)-P_{\rm top}(\varphi)\vert 
  + \vert P_{\rm top}(\varphi)-P_{\rm top}(X_m,\varphi)\vert+\vert  P_{\rm top}(X_m,\varphi) -P_{\rm top}(X_m,\phi)\vert <\epsilon
 \end{align*}
 which completes the proof.
 \end{proof}
 
 We now use these results to prove  Theorem A.

\begin{proof}[Proof of Theorem A] Part (i) of Theorem A is proven in Theorem \ref{thmalmsofpres}.
  To prove the computability statement for the topological pressure on  $FSSP(X)$,  by Proposition \ref{lem:pres_upper}, it suffices to prove that $\phi\mapsto \Ptop(\phi)$ is lower semi-computable on  $FSSP(X)$.  We observe that the lower-computability of $\Ptop(\cdot)$ on  $FSP(X)$ follows from Part (i) of Theorem A and the fact that $\Ptop(X_m,\phi)$ is computable on $C(X,\bR)$ (see Theorem \ref{cSofic}). 
  
To complete the proof, we  address the non-computability statement on the complement of $FSP(X)$. Let $\phi\in C(\Sigma,\bR)$ such that $\phi|_X\in C(X,\bR)\setminus FSP(X)$ and let $\epsilon\eqdef \Ptop(\phi)-P_{\rm seq}(\phi)= P_{\rm lim}(\phi)-P_{\rm seq}(\phi)>0$.    
We consider the function $Y\mapsto\Ptop(Y,\phi)$ for $Y$ in a neighborhood of $X$ in $\Sigma_{\rm coded}$.  Suppose that there is an oracle Turing machine $\psi$ for this function which takes, as input, a precision as well as oracles $\chi_{\cL(X)}$ and $\chi_{\cG}$ for the language $\cL(X)$ and generating set $\cG$, respectively.  Let $n$ and $m$ be the precisions to which $\psi$ queries the oracles $\chi_{\cL(X)}$ and $\chi_{\cG}$, respectively, in the computation of $\psi\left(\frac{\varepsilon}{2},\chi_{\cL(X)},\chi_{\cG}\right)$.  By choosing a larger $m$, if necessary, we may assume, without loss of generality, that $X_m$ is in the neighborhood of $X$ specified above and that $\cL(X_m,n)=\cL(X,n)$.  In this case, there are oracles $\chi_{\cL(X_m)}$ and $\chi_{\cG_m}$ which agree with $\chi_{\cL(X)}$ and $\chi_{\cG}$ up to precision $m$ and $n$, respectively.  Then, $\psi\left(\frac{\varepsilon}{2},\chi_{\cL(X)},\chi_{\cG}\right)=\psi\left(\frac{\varepsilon}{2},\chi_{\cL(X_m)},\chi_{\cG_m}\right)$ since the queries up to precision $n$ and $m$ are identical.

Suppose that $\mu_{X_m}\in\cM_{X_m}$.  We denote the (formal) extension of $\mu_{X_m}$ to $X$  by $\mu$  and observe that $\mu\in\cM_{X}$.  In addition, 
$$
h_{\mu_{X_m}}(f)+\int \phi|_{X_m} d\mu_{X_m}=h_{\mu}(f)+\int \phi|_X d\mu.
$$
Therefore, it follows that $\Ptop(X_m,\phi)\leq P_{\rm seq}(\phi)\leq \Ptop(X,\phi)-\epsilon$.  This leads to a contradiction, however, since the existence of $\psi$ implies that
\begin{align*}
\left|\Ptop(X,\phi)\right.&\left.-\Ptop(X_m,\phi)\right|\\
&=\left|\Ptop(X,\phi)-\psi\left(\frac{\varepsilon}{2},\chi_{\cL(X)},\chi_{\cG}\right)+\psi\left(\frac{\varepsilon}{2},\chi_{\cL(X_m)},\chi_{\cG_m}\right)-\Ptop(X_m,\phi)\right|<\epsilon.\\[-1.26cm]
\end{align*}
\end{proof}

%-_-_-_-_-_-_-_-_-_-_-_-_-_-_-_-_-_-_-_-_-_-_-_-_-_-_-_-_-_-_-_-_-_-_-_-_-_-_-_-_-_-_-_-_-_-_-_-_-_-_-_-_-_-_-_-_-_-_-_-_-_-_-_-_
\section{Computability of the topological pressure for general shifts} \label{sec:proof:counterexample}

% ----------------------------------------------------------------------------------------------------------------------------------------------------------------------
We prove Theorem B and Corollary \ref{cor4}. We begin with two preparatory results. 

\begin{lemma}\label{lem:posentropy}
Let $X\in\Sigma_{\rm invariant}$ and let $\phi\in C(X,\bR)$.  Define $b_\phi=\sup\{\int\!\phi\, d\mu: \mu\in \mathcal{M}_X\}$.  Then, $\Ptop(\phi)>b_\phi$ if and only if  for all $\mu\in ES(\phi)$, $h_\mu(f)>0$.
\end{lemma}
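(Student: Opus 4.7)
The plan is to prove the two implications separately, both relying essentially on the variational principle $P_{\text{top}}(\phi) = \sup_{\mu \in \mathcal{M}_X}\left(h_\mu(f) + \int\phi\, d\mu\right)$ together with the compactness of $\mathcal{M}_X$.

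For the forward direction, I would assume $P_{\text{top}}(\phi) > b_\phi$ and pick any $\mu \in ES(\phi)$. By the defining equality of an equilibrium state,
\begin{equation*}
h_\mu(f) = P_{\text{top}}(\phi) - \int\phi\, d\mu \;\geq\; P_{\text{top}}(\phi) - b_\phi \;>\; 0,
\end{equation*}
using only that $\int\phi\, d\mu \leq b_\phi$ by definition of $b_\phi$. This direction is essentially a one-line calculation and presents no difficulty.

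For the reverse direction I would argue by contrapositive: assume $P_{\text{top}}(\phi) \leq b_\phi$ and produce an equilibrium state with zero entropy. The variational principle immediately gives $P_{\text{top}}(\phi) \geq \int\phi\, d\mu$ for every $\mu \in \mathcal{M}_X$, and hence $P_{\text{top}}(\phi) \geq b_\phi$, so in fact $P_{\text{top}}(\phi) = b_\phi$. Since $X$ is compact and $\phi$ is continuous, the map $\mu \mapsto \int\phi\, d\mu$ is continuous on the compact (weak$^\ast$) space $\mathcal{M}_X$ and thus attains its supremum at some $\nu \in \mathcal{M}_X$. Then $\int\phi\, d\nu = b_\phi = P_{\text{top}}(\phi)$, and invoking the variational principle once more gives $h_\nu(f) \leq P_{\text{top}}(\phi) - \int\phi\, d\nu = 0$, so $h_\nu(f) = 0$ and $\nu \in ES(\phi)$.

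There is no real obstacle here; the only subtlety worth flagging is the attainment of $b_\phi$, which is why I would invoke compactness of $\mathcal{M}_X$ and continuity of the integration functional explicitly rather than relying on upper semi-continuity of the entropy map (which is standard for subshifts but not needed for this argument). The proof should take only a few lines.
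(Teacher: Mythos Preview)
Your proposal is correct and follows essentially the same route as the paper's proof: both directions hinge on the identity $h_\mu(f)=P_{\rm top}(\phi)-\int\phi\,d\mu$ for equilibrium states, and the reverse direction uses compactness of $\mathcal{M}_X$ to realize $b_\phi$ as a maximum, yielding a zero-entropy equilibrium state. The only cosmetic difference is that you spell out why $P_{\rm top}(\phi)=b_\phi$ in the contrapositive, whereas the paper simply starts from that equality.
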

\begin{proof}
Let $\mu\in ES(\phi)$, i.e., $\Ptop(\phi)-\int\!\phi\, d\mu=h_\mu(f)$.  If $\Ptop(\phi)>b_\phi$, then $h_\mu(f)\geq\Ptop(\phi)-b_\phi>0$.  On the other hand, suppose $\Ptop(\phi)=b_\phi$. By the compactness of $\mathcal{M}$,  the supremum in the definition of $b_\phi$ is a maximum. Thus, there exists $\mu\in\mathcal{M}$ with $\int\!\phi\, d\mu=b_\phi$, and, thus, $\mu\in ES(\phi)$. We conclude  that $h_\mu(f)=0$.
\end{proof}

Let $X\in\Sigma_{\rm invariant}$ and consider a potential $\phi\in C(\Sigma,\bR)$ such that $f:X\to X$ and $\phi|_X$ satisfy the conditions in Lemma~\ref{lem:posentropy}.  Let $\varepsilon>0$ such that $\Ptop(\phi|_X)-b_{\phi|_X}>5\varepsilon$ and fix $n\in \bN$. Let $\calF_n$ be the complement of $\calL(X,n)$ in $\cA_d^n$ and let $X_{\mathcal{F}_n}$ be the SFT whose forbidden words are given by $\calF_n$.  We recall that a transitive component of $X_n$ is a maximal transitive invariant subset of $X_{\mathcal{F}_n}$.

\begin{lemma}\label{lem:smallintegral}
Suppose that $Y$ is a  transitive component of $X_n$ with $X\cap Y\not=\emptyset$. For any $w\in \calL(Y)$, there exists a periodic point $y\in Y$ which contains  the word $w$ and satisfies $\int\!\phi\,d\mu_y< b_{\phi|_X}+4\varepsilon$.
\end{lemma}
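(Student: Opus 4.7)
The plan is to build the periodic point $y$ as the infinite repetition of a finite word that concatenates the prescribed word $w$, a long block $x[-K,M+K-1]$ extracted from a fixed point $x\in X\cap Y$, and bounded-length connecting words supplied by the transitivity of $Y$. Because $x$ lies in $X$, the Birkhoff averages of $\phi$ along the orbit of $x$ are controlled by $b_{\phi|_X}$, and because the $x$-block will dominate the period, the Birkhoff average of $\phi$ along the orbit of $y$ will inherit the same bound, up to small errors coming from the variation of $\phi$ over short cylinders and from the short boundary portion.

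Concretely, I would first invoke uniform continuity of $\phi$ on the compact space $\Sigma$ to fix $K\in\bN$ with $\var_K(\phi)<\varepsilon$. Next, I would pick any $x\in X\cap Y$; since every weak-$\ast$ cluster point of the empirical measures $\frac{1}{M}\sum_{k=0}^{M-1}\delta_{f^k x}$ lies in $\cM_X$, we have $\limsup_M \frac{1}{M}\sum_{k=0}^{M-1}\phi(f^k x)\leq b_{\phi|_X}$, so I may take $M$ as large as I wish with $\frac{1}{M}\sum_{k=0}^{M-1}\phi(f^k x)< b_{\phi|_X}+\varepsilon$. The final structural ingredient is the irreducibility of $Y$: as a transitive component of the step-$n$ SFT $X_n$, $Y$ is itself a transitive SFT, hence there is a constant $N_Y$ such that any two words in $\calL(Y)$ can be joined inside $\calL(Y)$ by a word of length at most $N_Y$. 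Applying this twice---once to connect $w$ to $x[-K,M+K-1]$ and once to close the resulting finite word into a legal loop---produces a periodic point $y\in Y$ of period $p\leq |w|+2N_Y+(M+2K)$ that contains $w$.

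The closing step is a direct Birkhoff estimate. I would split $\int\phi\,d\mu_y=\frac{1}{p}\sum_{i=0}^{p-1}\phi(f^i y)$ into an \emph{interior} part indexed by the $M$ positions lying at distance at least $K$ from both endpoints of the embedded $x$-block, and a \emph{boundary} part consisting of the remaining $p-M\leq |w|+2N_Y+2K$ positions. For any interior index $i$, the point $f^i y$ agrees with $f^k x$ on coordinates $[-K,K]$ for the corresponding $k\in\{0,\dots,M-1\}$, so $|\phi(f^i y)-\phi(f^k x)|\leq\var_K(\phi)<\varepsilon$; hence the interior contribution is at most $M(b_{\phi|_X}+2\varepsilon)$, while the boundary is bounded crudely by $(p-M)\|\phi\|_\infty$. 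Taking $M$ large enough that $(p-M)/p$ is sufficiently small then forces $\int\phi\,d\mu_y< b_{\phi|_X}+4\varepsilon$.

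The main technical obstacle is the loop construction in the middle paragraph: one has to verify that the two connecting words, produced independently at the two concatenation points, can be chosen compatibly with the $n$-step constraints defining $X_n$, so that the whole finite word extends to a bi-infinite periodic sequence inside $Y$ rather than merely inside $X_n$. Once this loop is in place, the Birkhoff bound is essentially forced by the choice of $K$, $x$, and $M$.
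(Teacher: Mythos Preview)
Your argument is correct and follows essentially the same route as the paper: take a long block from a point of $X\cap Y$ whose Birkhoff average of $\phi$ is nearly bounded by $b_{\phi|_X}$, glue it to $w$ via bounded-length connectors supplied by the transitivity of the SFT $Y$, close into a periodic orbit, and split the resulting Birkhoff sum into a dominant interior part and a negligible boundary part. The only cosmetic differences are that the paper selects a Birkhoff-generic point for an ergodic measure in $\cM_{X\cap Y}$ (whereas you invoke weak-$\ast$ cluster points of empirical measures, which is equally valid and arguably more direct) and chooses the cylinder depth after the block length; your flagged ``obstacle'' about compatibility of the two connectors is a non-issue in a transitive $n$-step SFT, since once $ws_1x[-K,M+K-1]\in\cL(Y)$ one simply picks $s_2$ connecting its terminal $(n{-}1)$-block back to its initial $(n{-}1)$-block.
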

\begin{proof}
We observe that $X\cap Y$ is a subshift. Therefore, by the Krylov-Bogolyubov theorem, see, e.g., \cite{Walters1973}, $ \mathcal{M}_{Y\cap X}\not=\emptyset$. Moreover, an ergodic decomposition argument shows that there exists
 $\mu\in\calM_{{\rm erg},Y\cap X}$. Let $z\in Y$ be a Birkhoff generic point of $\mu$.  Then, there exists $M\in \bN$ such that for all $m\geq M$,
$$\left|\frac{1}{m}S_m\phi(z)-\int\!\phi\,d\mu\right|<\varepsilon,$$
see Definition \eqref{eq:Sn}.  Since $\phi$ is continuous and $\Sigma$ is compact, $\phi$ is uniformly continuous.  
Therefore, for all $m\geq M$, there exists a $k\in\bN$ such that for $y\in [z]_{-k}^{m+k}$, we have
\begin{equation}\label{eq114}
\left| S_{m+1}\phi(y)- S_{m+1}\phi(z)\right|<(m+1)\varepsilon.
\end{equation}
If follows from the transitivity of $Y$ that for any word $w'$ in the language for $Y$, there exists words $v_p=v_p(w')$ and $v_s=v_s(w')$ in the language of $Y$ such that $\calO(w'v_pwv_s)\in Y$.    
Moreover, since $Y$ is a transitive SFT, the lengths of $v_p$ and $v_s$ are uniformly bounded from above independently of $w'$.  Let $C$ be  a uniform upper bound for $|v_p|+|w|+|v_s|+2k$.
Fix $m\geq M$ such that
$$\frac{C}{C+m+1}\|\phi\|_\infty<\varepsilon.$$
We note that this condition implies that $m$ is  large with respect to $C$.  Fix $w'=z_{-k} \cdots z_{m+k}$, where $k$ is given as in Inequality \eqref{eq114}, and let  $v_p=v_p(w')$ and $v_s=v_s(w')$ be as above.  We write $y=f^{-k}(\calO(w'v_pwv_s))$ and observe that $y\in[z]_{-k}^{m+k}$.  Moreover, let $c=|v_p|+|w|+|v_s|+2k$ and $\ell=c+m+1$, which is the period of $y$.  Then, since $\int\!\phi\, d\mu_y=\frac{1}{\ell}S_{\ell}\phi(y)$,
\begin{align*}
\left|\int\!\phi\,d\mu_y-\int\!\phi\,d\mu\right|&\leq\frac{1}{\ell}\left|S_\ell\phi(y)-S_\ell\phi(z)\right|+\left|\frac{1}{\ell}S_\ell\phi(z)-\int\!\phi\,d\mu\right|\\
&<\frac{1}{\ell}\left|S_{m+1}(y)-S_{m+1}\phi(z)\right|+\frac{1}{\ell}\left|S_c\phi(f^{m+1}(y))-S_c\phi(f^{m+1}(z))\right|+\varepsilon\\
&<\frac{(m+1)}{\ell}\varepsilon+\frac{2c}{\ell}\|\phi\|_\infty+\varepsilon<4\varepsilon.
\end{align*}
The result now follows since $\int\!\phi\,d\mu$ is bounded above by $b_{\phi|_X}$.
\end{proof}

Now we prove Theorem B. 
\begin{proof}[Proof of Theorem B]
We denote the transitive components of $X_n$ which have  nonempty intersection with $X$ by $Y_{n,1},\dots,Y_{n,c_n}$, see \cite[Chapter 5]{Kit} for details. Furthermore, for each component  $Y_{n,i}$, we fix a periodic point $y_{n,i}\in X_n$ whose language is $\calL(Y_{n,i},n)$, as constructed in Lemma \ref{lem:smallintegral}.  This can be done by applying Lemma  \ref{lem:smallintegral} to  a word $w=w_{n,i}$ that contains all  words in $\calL(Y_{n,i},n)$, and such a word exists since $Y_{n,i}$ is transitive.  We denote the orbit of $y_{n,i}$ by $O_{n,i}$, i.e., the finite collection of shifts of $y_{n,i}$, and let $Z_n'=\bigcup_i O_{n,i}$. We construct a subshift $Z_n$ with the property that $\calL(Z_n,n)=\calL(X_n,n)=\calL(X,n)$, but $\Ptop(Z_n,\phi)\leq b_{\phi|_X}+4\varepsilon<\Ptop(X,\phi)-\varepsilon$ for fixed $\varepsilon$.  This condition shows that the pressure is not continuous at $X$, and, hence, is not computable.  The shift $Z_n$ is constructed in such a way that $Z_n'$ is the nonwandering set of $Z_n$.

For each $w\in\calL(X,n)$, we fix $x_w\in X$ such that $w$ is a word in $x_w$.  If $x_w$ is a nonwandering point then $x_w\in Y_{n,i}$ for some $i=1,\dots,c_n$, see \cite{Kit}. Hence $w\in \calL(O_{n,i},n)\subset \calL(Z_n',n)$.

We next consider the case when $x_w$ is a wandering point of $X$. Let $Y_{n,w}^{\omega}$ and $Y_{n,w}^\alpha$ be the transitive components of $X_n$ that contain the omega and alpha limit sets of $x_w$, respectively, see \cite{Kit} for additional details.  Since $x_w$ is wandering,  $Y_{n,w}^{\omega}\not=Y_{n,w}^\alpha$.  We observe that since $x_w$ has accumulation points in both $Y_{n,w}^{\omega}$ and $Y_{n,w}^\alpha$, $Y_{n,w}^\omega\cap X$ and $Y_{n,w}^\alpha\cap X$ are nonempty.  In other words, $Y_{n,w}^\omega=Y_{n,i}$ and $Y_{n,w}^\alpha=Y_{n,j}$ for some $i,j$ with $i\not= j$.  Let $\tau_{n,w}^\omega$ and $\tau_{n,w}^\alpha$ be the generating segments of the periodic points $y_{n,i}$ and $y_{n,j}$, respectively.  We now construct a wandering point $y_{n,w}=(\tau_{n,w}^\alpha)^\infty v_pwv_s(\tau_{n,w}^\omega)^\infty\in X_n$ for some $v_p$ and $v_s$, which is eventually
periodic under both forward and backward iteration.  The challenge in this construction is to prove the existence of $v_p$ and $v_s$ so that $y_{n,w}\in X_n$, i.e., so that no forbidden words appear in this point.  For the forward direction, we observe that for $i$ sufficiently large, $w$ appears in $x_w$ with index less than $i$ and $(x_w)_i\dots(x_w)_{i+n-1}$ is a word in $\calL(Y_{n,w}^\omega,n)$.  Since $Y_{n,w}^\omega$ is transitive, there is some word in $Y_{n,w}^\omega$ which connects $(x_w)_i\dots(x_w)_{i+n-1}$ to $\tau_{n,w}^\omega$.  Therefore, we conclude that $w$ can be connected to $\tau_{n,w}^\omega$ without using forbidden words.  For the backwards direction we proceed similarly.

Finally, we define $Z_n=Z_n'\cup \bigcup_{x_w\, {\rm wandering}} O(y_{n,w})$, which is a subshift
with $\calL(Z_n,n)=\calL(X_n,n)$. Moreover, the
nonwandering set of $Z_n$ is $Z_n'$. Using the fact that the pressure coincides with the pressure restricted to the nonwandering set, see, e.g., \cite{Walters1973}, we conclude that
\[
P_{\rm top}(Z_n,\phi)=P_{\rm top}(Z_n',\phi)=\max_i P_{\rm top}(O_{n,i},\phi)<b_{\phi\vert_{X_n}}+4\varepsilon< P_{\rm top}(X,\phi)-\varepsilon.\vspace{-0.73cm}
\]
\end{proof}

Finally, we present the proof of Corollary \ref{cor4}.

\begin{proof}[Proof of Corollary \ref{cor4}]
Let $\phi_0=0$ and $X_0\in \Sigma_{\rm invariant}$. If $h_{\rm top}(X_0)=P_{\rm top}(X_0,\phi_0)>0$ then, by the variational principle in Definition \eqref{varpri}, every equilibrium state of $\phi_0$ is a measure of maximal entropy, and, thus, has positive entropy. Therefore,
Theorem B  implies that $X\mapsto h_{\rm top}(X)$ is not computable at $X_0$. 

Assume now that $h_{\rm top}(X_0)=0$. Let $\chi_{\cL(X_0)}$ be an oracle for $\cL(X_0)$. Since $x\mapsto \log x$ is computable there exists a Turing machine $\psi$ which takes as input the oracle $\chi_{\cL(X_0)} $ and $m\in \bN$ and outputs a positive rational number $q_m$ with $2^{-m}>q_m-h_m\geq 0$, where 
\[
h_m=\min\left\{\frac{1}{k}\log |\cL_k(X_0)|: k=1,\dots,m\right\}.
\] 
Moreover, by computing the logarithm with increasing precision, we can assure that the sequence $(q_m)_m$ is non-increasing.
It follows from Definition \eqref{eqn:def:P} that $h_m$ is non-increasing and converges to $h_{\rm top}(X_0)=0$. We conclude that the Turing machine $\psi$  produces a non-increasing sequence of rational numbers $q_m$ that converges from above to $0$. 
Moreover,  since the entropy is nonnegative, we may compute an $m$ so that $|q_m-h_{\rm top}(X_0)|=q_m<2^{-n}$. Let $\ell_n$ be the largest precision to which the oracle   $\chi_{\cL(X_0)}$ is queried by $\psi$ so that $q_m<2^{-n}$. Let $X\in \Sigma_{\rm invariant}$ be a shift space such that there exists an oracle of $\cL(X)$ that agrees with the oracle of $\cL(X_0)$ up to precision $\ell_n$. It follows from Definition \eqref{eqn:def:P} that $q_m$ is also an upper bound of $h_{\rm top}(X)$. Hence $|q_m-h_{\rm top}(X)|<2^{-n}$. This shows that the function $X\mapsto h_{\rm top}(X)$ is computable at $X_0$. Since the Turing machine $\psi$ approximates $h_m$ to any desired precision as $m$ grows, it follows that $\psi$ uniformly computes the topological entropy for all shift spaces with zero entropy. 
\end{proof}

\section{Computability of the topological pressure of particular subshifts} \label{sec:applications}

We  apply Theorem A  to obtain  computability results for the topological pressure for particular classes of coded shifts. To derive these results, we establish the identity $FSSP(X)=C(X,\bR)$ which, together with Theorem A, imply the computability of the pressure for all continuous potentials.  We construct explicit examples of coded shifts for which $FSSP(X)\not=C(X,\bR)$ in Section 8.
In the following elementary fact we use  the standard notation $d(A,B)=\inf\{d(x,y), x\in A, y\in B\}$ for sets  $A,B\subset X$.

\begin{proposition}\label{proptriv} Let $X$ be a coded shift with generating set $\Gen$. Suppose that for all $\mu\in\cM_X$ with $\mu(X_{{\rm lim}})=1$,  $h_\mu(f)=0$. Furthermore, assume that for any $\phi\in C(X,\bR)$  with 
$P_{\rm top}(\phi)=P_{\rm lim}(\phi)$,
there exists a sequence of periodic point measures $\mu_k$ supported on $X_{\rm seq}$ such that 
\begin{equation}\label{eqdist}d\left({\rm supp}\mu_k, \cup_{\mu(X_{\rm lim})=1} {\rm supp}\mu\right)>0\end{equation}
for all $k\geq 1$ and
$P_{\mu_k}(\phi)\rightarrow P_{\rm top}(\phi)$.   Then $FSSP(X)=C(X,R)$. 
\end{proposition}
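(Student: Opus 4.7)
The plan is to show every $\phi \in C(X,\bR)$ is a uniform limit of potentials with strict full sequential pressure; since $FSSP(X)$ is by definition the closure of such potentials, this suffices. If $P_{\rm seq}(\phi) > P_{\rm lim}(\phi)$, then $\phi$ is already such a potential, so we may assume $P_{\rm seq}(\phi) \leq P_{\rm lim}(\phi)$. Since $X = X_{\rm seq} \sqcup X_{\rm lim}$ is a partition into $f$-invariant Borel sets, decomposing an arbitrary $\mu \in \cM_X$ according to this partition and invoking the affinity of the entropy and of $\phi$-integration yields $P_{\rm top}(\phi) = \max\{P_{\rm seq}(\phi), P_{\rm lim}(\phi)\}$. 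In our case this reads $P_{\rm top}(\phi) = P_{\rm lim}(\phi)$, so hypothesis (2) applies and supplies periodic point measures $\mu_k$ on $X_{\rm seq}$ with $d({\rm supp}\,\mu_k, A) > 0$ and $P_{\mu_k}(\phi) \to P_{\rm top}(\phi)$, where $A \eqdef \bigcup_{\mu(X_{\rm lim})=1} {\rm supp}\,\mu$.

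Fix $\epsilon > 0$. Since ${\rm supp}\,\mu_k$ is compact and $d({\rm supp}\,\mu_k, \bar{A}) = d({\rm supp}\,\mu_k, A) > 0$, Urysohn's lemma yields a continuous $\psi_k : X \to [0,1]$ with $\psi_k \equiv 1$ on ${\rm supp}\,\mu_k$ and $\psi_k \equiv 0$ on $\bar{A}$. I would then set $\phi' \eqdef \phi + \epsilon\,\psi_k$, which obeys $\|\phi' - \phi\|_\infty \leq \epsilon$, and analyze how this perturbation moves $P_{\rm seq}$ and $P_{\rm lim}$.

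The crucial support claim is that every $\mu \in \cM_X$ with $\mu(X_{\rm lim}) = 1$ satisfies ${\rm supp}\,\mu \subset \bar{A}$. Indeed, by the $f$-invariance of $X_{\rm lim}$, in the ergodic decomposition $\mu = \int\nu\,dP(\nu)$ the component $\nu$ satisfies $\nu(X_{\rm lim}) = 1$ for $P$-a.e.\ $\nu$, whence ${\rm supp}\,\nu \subset A$ by the very definition of $A$; integrating yields $\mu(A) = 1$ and therefore ${\rm supp}\,\mu \subset \bar{A}$. Consequently $\int \psi_k\,d\mu = 0$, and hypothesis (1) forces $h_\mu(f) = 0$, so $P_\mu(\phi') = \int\phi\,d\mu = P_\mu(\phi)$ for every such $\mu$. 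Taking the supremum gives $P_{\rm lim}(\phi') = P_{\rm lim}(\phi)$. On the other hand, $\int\psi_k\,d\mu_k = 1$, so $P_{\mu_k}(\phi') = P_{\mu_k}(\phi) + \epsilon$; choosing $k$ so large that $P_{\mu_k}(\phi) > P_{\rm top}(\phi) - \epsilon/2$, we obtain $P_{\rm seq}(\phi') \geq P_{\mu_k}(\phi') > P_{\rm lim}(\phi) + \epsilon/2 = P_{\rm lim}(\phi') + \epsilon/2$. Thus $\phi'$ has strict full sequential pressure, and letting $\epsilon \downarrow 0$ places $\phi$ in $FSSP(X)$.

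The step I expect to require the most care is the support claim for non-ergodic measures on $X_{\rm lim}$: the bump $\psi_k$ was built to vanish only on $\bar{A}$ (not on all of $X_{\rm lim}$), so one really needs the ergodic decomposition combined with the $f$-invariance of $X_{\rm lim}$ to propagate the inclusion ${\rm supp}\,\nu \subset A$ from ergodic $\nu$ to arbitrary $\mu$. It is the conjunction of this support fact with the zero-entropy hypothesis of assumption (1) that makes the perturbation harmless for $P_{\rm lim}$, while the distance hypothesis in (2) lets $P_{\rm seq}$ increase by the full amount $\epsilon$.
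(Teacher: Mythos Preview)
Your argument is correct and follows essentially the same route as the paper: handle the case $P_{\rm top}(\phi)>P_{\rm lim}(\phi)$ trivially, and in the case $P_{\rm top}(\phi)=P_{\rm lim}(\phi)$ use the periodic measures $\mu_k$ from the hypothesis together with a Urysohn bump separating ${\rm supp}\,\mu_k$ from $A$ to build $\phi'=\phi+\epsilon\psi_k$ with $P_{\rm seq}(\phi')>P_{\rm lim}(\phi')=P_{\rm lim}(\phi)$, then let $\epsilon\to 0$. Two small remarks: the ``crucial support claim'' you flag is actually immediate, since $A$ is by definition the union of ${\rm supp}\,\mu$ over \emph{all} invariant $\mu$ with $\mu(X_{\rm lim})=1$ (not just ergodic ones), so ${\rm supp}\,\mu\subset A\subset\bar A$ directly and no ergodic decomposition is needed; and the zero-entropy hypothesis~(1) is not actually required to get $P_\mu(\phi')=P_\mu(\phi)$, which follows simply from $\int\psi_k\,d\mu=0$.
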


\begin{proof}
If $\phi\in C(X,\bR)$  with 
$P_{\rm top}(\phi)>P_{\rm lim}(\phi)$ then it follows from the definition of $FSSP(X)$ that $\phi\in FSSP(X)$. 
 Next we consider the case
$P_{\rm top}(\phi)=P_{\rm lim}(\phi)$. 
 By the variational principle, see Equation \eqref{varpri}, $$P_{\mu_k}(\phi)=\int \phi\, d\mu_k\rightarrow P_{\rm top}(\phi)=P_{\rm lim}(\phi)=\sup_{\mu(X_{\rm lim})=1}\int \phi\, d\mu.$$ 
 Let $\epsilon>0$ and let $k=k(\epsilon)\in \bN$ such that $|P_{\mu_k}(\phi)-P_{\rm top}(\phi)|<\epsilon$. 
  By  Urysohn's lemma and  Equation \eqref{eqdist}, there exists $\psi=\psi(\epsilon)\in C(X,\bR)$ such that 
 
\begin{enumerate}
\item $\Vert \phi-\psi \Vert_{\rm sup}=\epsilon$,
\item $\psi(x)=\phi(x)+\epsilon$ for all $x\in {\rm supp}\, \mu_k$,  and
\item $\psi(x)=\phi(x)$ for all  $x\in \bigcup_{\mu(X_{\rm lim})=1} {\rm supp}\,\mu$.
\end{enumerate}
By the second property, we have 
$$P_{\rm seq}(\psi)\geq P_{\mu_k}(\psi)= P_{\rm \mu_k}(\phi)+\epsilon>P_{\rm top}(\phi)=P_{\rm lim}(\phi). $$
Moreover, by the third property we have
$P_{\rm lim}(\psi)=P_{\rm lim}(\phi).$ 
Hence $\psi\in FSSP(X)$. Finally, by the first property and letting $\epsilon$ go to zero, we conclude  $\phi\in FSSP(X)$. 
\end{proof}

% -----------------------------------------------------------------------------------------------------------------------------------------------------------------
\subsection{S-gap shifts and generalizations}
S-gap shifts are symbolic systems with many practical applications including in the coding of data. They have recently received attention in dynamical systems  since they  provide simple examples of symbolic systems with some surprising properties, see, e.g., Lind and Marcus \cite{MarcusLind1995}.
\begin{definition}[S-gap shifts] 
The S-gap shift $X_S$ associated to a set $S\subset\bN_0$ is the coded shift with generating set  $\Gen = \left\{ 0^s1 \;:\; s\in S \right\}$.  
\end{definition}
We note that $S$ may contain $0$, in which case $1\in\Gen$. We observe that when $S$ is finite, the S-gap shift is a renewal shift, and, when $S$ is cofinite, $X_S$ is an SFT whose forbidden words are $\{10^s1\;:\;s\in S^c\}$.  Thus, in these cases, the computability of the topological pressure follows from Proposition \ref{cpSofic}.   

We generalize $S$-gap shifts to larger alphabets.  
For $d\geq 1$ we denote the set of permutations of $\{0,\dots,d-1\}$ by $\mathfrak{S}_d$.
We extend the class of $S$-gap shifts   by allowing permutations of the blocks of the generators.
\begin{definition}[Generalized gap shift]\label{ggs}
Let $d\geq 1$ and let $S_0,\dots, S_{d-1}\subset\bN_0$ be non-empty sets. Furthermore, let $\Pi\subset\mathfrak{S}_d$ be a non-empty subset. The  generalized gap shift $X=X_{S_0,\dots,S_{d-1},\Pi}$  associated to the sets $S_0,\dots, S_{d-1}$ and permutation set $\Pi$ is the coded shift with generating set 
\begin{equation}\label{fggs}
\Gen = \left\{\sigma(0)^{s_{\sigma(0)}}\cdots\sigma(d-1)^{s_{\sigma(d-1)}}d\;:\;  \sigma\in \Pi, s_{\sigma(j)}\in S_{\sigma(j)}\right\}. 
\end{equation}
When $|\Pi|=1$, then the order of $\{0,\dots,d-1\}$ in each generator is the same for all generators and we call the corresponding gap shift an {\em ordered gap shift}.
\end{definition}
Note that we do not require that $\Pi$ is a subgroup of $\mathfrak{S}_d$ in Equation \eqref{fggs}.
As in the case of an ordered gap shift, if the all the sets $S_0,\dots, S_{d-1}$ are finite, then  $X_{S_0,\dots,S_{d-1},\Pi}$ is a Sofic shift, in which case the topological pressure is computable.  In addition, we observe that this definition is different from $S$-limited shifts from \cite{S-limited:2018} since in generalized gap shifts, the order of $\{0,\dots,d-1\}$ may vary and the generators include the spacing character $d$. 

Spandl \cite{Spandl2007} observed that the topological entropy for S-gap shifts is computable once the set $S$ is given as input. His observation is based on a well-known formula for the topological entropy for S-gap shifts which readily  implies the computability of the entropy.  We note that our approach differs  from the one in \cite{Spandl2007},  and, in particular, it establishes the computability of the topological pressure and not merely the entropy. Moreover, it is not clear if the approach in \cite{Spandl2007} for the entropy can be applied to generalized gap shifts.

We now completely describe $X_{\lim}$ in the following result:
\begin{lemma} \label{lem:Xseq_Sgap}
Let $d\geq 1$, $S_0,\dots, S_{d-1}\subset\bN_0$, $\Pi\subset  \mathfrak{S}_d$ and let $\Gen$ be as in Definition \ref{ggs}.  Let $X=X_{S_0,\dots,S_{d-1},\Pi}$.  Suppose that ${\rm I}={\rm I}(S_0,\dots, S_{d-1})\eqdef\{i: {\rm card}(S_i)=\infty\}\not=\emptyset.$ Then  the set $X_{\lim}$ is given by the following pair-wise disjoint union:
\begin{equation*}
\begin{split} 
X_{\lim}&= \left\{ i^\infty g_{\ell_1}\cdots g_{\ell_k} j^\infty \;:\, \, g_{\ell_k}\in \Gen, i,j\in {\rm I}\right\} \\ 
& \quad  \sqcup \left\{  i^\infty g_{\ell_1}g_{\ell_2} g_{\ell_3}  \cdots \;:\;  g_{\ell_k}\in \Gen\right\}\\
 &\quad \sqcup \left\{ \cdots  g_{\ell_{3}} g_{\ell_{2}}g_{\ell_{1}}j^\infty   \;:\;  g_{\ell_k}\in \Gen, \in {\rm I}\right\}\sqcup \left\{i^\infty:\, i\in {\rm I}\right\}.
\end{split}\end{equation*}
\end{lemma}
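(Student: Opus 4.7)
The proof has three steps: (i) each of the four listed subsets is contained in $X_{\lim}$, (ii) every element of $X_{\lim}$ belongs to one of these subsets, and (iii) the four subsets are pairwise disjoint. The central structural observation driving the argument is that each generator ends in the symbol $d$ and $d$ appears nowhere else inside a generator; consequently, in any bi-infinite concatenation of generators the $d$-positions completely determine the decomposition, and any $X$-admissible finite word whose only occurrence of $d$ sits at its last position must coincide with a single generator.

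For (i), I split membership in $X_{\lim}$ into being in $X$ and not being in $X_{\rm seq}$. The second is an immediate $d$-count: every element of $X_{\rm seq}$ has infinitely many $d$'s in both tails, while each of the four listed forms has only finitely many $d$'s on at least one side. For membership in $X$, I would exhibit explicit approximations in $X_{\rm seq}$: for type~1, $x=i^\infty g_{\ell_1}\cdots g_{\ell_k} j^\infty$, construct $y_N \in X_{\rm seq}$ by replacing $g_{\ell_1}$ (resp.\ $g_{\ell_k}$) with a generator whose $i$-block (resp.\ $j$-block) is enlarged to length $N$---possible since $i,j \in I$ forces $S_i, S_j$ to contain arbitrarily large exponents---and then padding both sides with further such enlarged generators, so that $y_N$ agrees with $x$ on a window of radius $N$. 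Type~4 is approximated by a shifted periodic concatenation $(h_i^{(N)})^\infty$ of a single generator with long $i$-block; types~2 and~3 combine both constructions.

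For (ii) the key claim is that any $x \in X$ having infinitely many $d$-positions in both directions already lies in $X_{\rm seq}$: enumerating the $d$-positions as $\cdots < p_{-1} < p_0 < p_1 < \cdots$, each subword $x[p_k+1,p_{k+1}]$ is $X$-admissible and has its only $d$ at the last position, hence by the structural observation is a single generator. Contrapositively, every $x \in X_{\lim}$ has finitely many $d$'s in at least one tail, which gives the four cases matching the listed subsets; step (iii) then follows immediately from the different $d$-counting profiles. I expect the main technical obstacle to be the alignment in step (i): when $g_{\ell_1}$ does not have $i$ as its initial block, the enlargement must be routed through a possibly different $\sigma \in \Pi$, and similarly for $g_{\ell_k}$ and $j$. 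A secondary delicate point is the ``no $d$'' sub-case of (ii), where a block-counting argument (each generator has $d$ non-$d$ blocks, each carrying a distinct symbol) is needed to force $x = i^\infty$ for some $i \in I$.
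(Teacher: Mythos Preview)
Your approach is essentially the same as the paper's: both proofs hinge on tracking the occurrences of the terminal symbol $d$, using that $d$ appears exactly once in each generator (at the end) to separate $X_{\rm seq}$ (where $d$ occurs infinitely often in both tails) from $X_{\lim}$, and then splitting $X_{\lim}$ according to which tails contain only finitely many $d$'s. The paper packages this via the quantities $n_{\pm}(x)=\sup\{n:x_{\pm n}=d\}$ and a three-way case split on their finiteness, which is exactly your step~(ii); for disjointness the paper invokes the same observation that $d\notin I$ but $d$ lies in every generator; and for the reverse containment (your step~(i)) the paper simply says ``straight-forward'' and gives no details.

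Your write-up is in fact more careful than the paper's on two points. First, the alignment issue you flag in step~(i) is real: for certain choices of $\Pi$ and $j\in I$, a transition such as $dj$ need not lie in $\cL(X)$, so not every formal string $i^\infty g_{\ell_1}\cdots g_{\ell_k}j^\infty$ is automatically in $X$---the listed sets should be read as intersected with $X$, and the paper silently assumes this. Second, the ``no $d$ at all'' sub-case of step~(ii) deserves the block-counting argument you indicate; the paper handles it only implicitly by first disposing of fixed points and then asserting $n_{\pm}$ is well-defined for the rest, which is a small gap when $S_k$ may contain $0$. Neither of these affects the overall structure, and your proposed route goes through.
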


\begin{proof}
The disjointness of the sets follows from the fact that $d\not\in I$, but appears in every $g_\ell$.
We first show that $X_{\lim}$ is contained in the disjoint union. We note that $X_{\rm seq}$ contains at most one fixed point, $d^\infty$, which is in $X_{\rm seq}$ if and only if $0\in S_i$ for all $i=0,\dots,d-1$.  All other fixed points are contained in $X_{\lim}$ and appear in the union.  Suppose now that $x\in X$ is not a fixed point. Then
\begin{equation*}
n_{\pm}(x) \eqdef \sup \left\{  n\in \bN \;:\; x_{\pm n} = d \right\}   
\end{equation*}
is well-defined.
Note that the points in $X_{\rm seq}$ are precisely those $x\in X$ for which both $n_{\pm}(x)$ are infinite. Let $x\in X\setminus X_{seq}$. Depending on the cases $(n_-(x)< \infty, n_+(x) < \infty)$, $(n_-(x) < \infty, n_+(x) = \infty)$, $(n_-(x) = \infty, n_+(x) < \infty)$, the sequence $x$  lies in the first, second or third set of the union, respectively.  The other containment is straight-forward.
\end{proof}

\begin{theorem} 
Let $f:X\to X$ be the generalized gap shift $X=X_{S_0,\dots,S_{d-1},\Pi}$.  Then $X$ is a coded shift with  unique representation $\mathcal{G}$ as in Equation \eqref{fggs}. Furthermore,  $FSSP(X)=C(X,\bR)$, and the topological pressure  on $X$ is computable when  the sets $S_0, \dots, S_{k-1}$ and the permutation set $\Pi$ are given as input.
\end{theorem}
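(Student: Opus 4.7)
The plan is to verify the three assertions in turn, using Proposition \ref{proptriv} for the full sigma-sequential pressure identity and Theorem A for computability.

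\emph{Unique representability.} Every generator in $\mathcal{G}$ terminates with the symbol $d$, and $d$ does not occur elsewhere in any generator (each $\sigma \in \Pi$ permutes $\{0,\ldots,d-1\}$). Thus, for any $x \in X_{\rm seq}$, the positions of $d$ in $x$ uniquely mark the terminal characters of consecutive generators, yielding a unique decomposition.

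\emph{$FSSP(X) = C(X,\bR)$.} If ${\rm I} \eqdef \{i : \mathrm{card}(S_i) = \infty\} = \emptyset$, then $\mathcal{G}$ is finite, $X$ is a Sofic shift, $X_{\rm seq} = X$ is closed, $X_{\rm lim} = \emptyset$, and the hypotheses of Proposition \ref{proptriv} are vacuous. Suppose ${\rm I} \neq \emptyset$ and apply Lemma \ref{lem:Xseq_Sgap}. The main technical step, which I expect to be the main obstacle, is to show that the only recurrent points of $X_{\rm lim}$ are the fixed points $\{i^\infty : i \in {\rm I}\}$: every element of the other three families in the lemma contains only finitely many occurrences of the symbol $d$, so any cylinder neighborhood specifying these positions is not revisited under sufficiently large forward or backward iterates. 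By Poincar\'e recurrence, any $\mu \in \cM_X$ with $\mu(X_{\rm lim}) = 1$ is concentrated on $\{i^\infty : i \in {\rm I}\}$, hence is a convex combination of Dirac measures at fixed points and has zero entropy. This verifies the first hypothesis of Proposition \ref{proptriv}.

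For the second hypothesis, let $\phi \in C(X,\bR)$ satisfy $P_{\rm top}(\phi) = P_{\rm lim}(\phi) = \max_{i \in {\rm I}} \phi(i^\infty)$, and choose $i^* \in {\rm I}$ attaining this maximum. Using $\mathrm{card}(S_{i^*}) = \infty$, pick $s_k \in S_{i^*}$ with $s_k \to \infty$, fix auxiliary choices $s_j \in S_j$ for $j \neq i^*$ together with some $\sigma \in \Pi$, and form generators $g_k$ whose $i^*$-block has length $s_k$ while the remaining portion has bounded length $C$ independent of $k$. Let $\mu_k$ be the periodic measure on $\mathcal{O}(g_k)$. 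By uniform continuity of $\phi$, any shift of $\mathcal{O}(g_k)$ whose origin lies more than $M$ positions from either end of the $i^*$-block is $\varepsilon$-close to $i^{*\infty}$, and since the fraction of such shifts equals $(s_k - 2M)/(s_k + C) \to 1$, a routine estimate yields $\int \phi\, d\mu_k \to \phi(i^{*\infty}) = P_{\rm top}(\phi)$. Every point in $\mathcal{O}(g_k)$ contains the symbol $d$ within a window of size $|g_k|$ about its origin, and $d \notin {\rm I}$, so $d(\mathrm{supp}\,\mu_k, \{i^\infty : i \in {\rm I}\}) > 0$, verifying (\ref{eqdist}). Proposition \ref{proptriv} then yields $FSSP(X) = C(X,\bR)$.

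\emph{Computability.} Treating the sets $S_j$ as decidable input and $\Pi$ as a finite list, a Turing machine enumerates $\mathcal{G}$ by testing the prescribed block-structure membership criterion on candidate words, and decides $\mathcal{L}(X,n)$ by trying all decompositions of a length-$n$ word as a suffix of a generator followed by at most $n$ complete generators followed by a prefix of a generator, each test being decidable from the oracles for the $S_j$'s and $\Pi$. With oracles for both $\mathcal{L}(X)$ and $\mathcal{G}$ in hand, and with $FSSP(X) = C(X,\bR)$ established, Theorem A(ii) furnishes a Turing machine that computes $\phi \mapsto P_{\rm top}(\phi)$ on all of $C(X,\bR)$. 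The unique representability and computability bookkeeping are structural; the substantive work lies in the recurrence analysis for $X_{\rm lim}$ and the periodic-measure approximation step above.
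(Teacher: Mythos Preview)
Your approach is essentially the same as the paper's: identify the terminal symbol $d$ to obtain unique representability, use Lemma \ref{lem:Xseq_Sgap} and Proposition \ref{proptriv} to get $FSSP(X)=C(X,\bR)$, and then invoke Theorem A for computability. The periodic-measure approximation of $\delta_{i^{*\infty}}$ via generators with a growing $i^*$-block is exactly the construction in the paper, and your computability bookkeeping is a more explicit version of what the paper sketches.

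There is one slip in your recurrence argument. You claim that every point in the three non-fixed-point families of Lemma \ref{lem:Xseq_Sgap} contains only finitely many occurrences of the symbol $d$. This is true for the first family $i^\infty g_{\ell_1}\cdots g_{\ell_k}j^\infty$, but false for the second and third: a point of the form $i^\infty g_{\ell_1}g_{\ell_2}g_{\ell_3}\cdots$ contains infinitely many $d$'s (one per generator on the right), and symmetrically for the third family. The correct observation is that a point in the second family has a \emph{leftmost} $d$, and a point in the third family has a \emph{rightmost} $d$. Letting $A_j$ be the set of points in the second family whose leftmost $d$ sits at index $j$, the sets $A_j$ are pairwise disjoint with $f(A_j)=A_{j-1}$, so any invariant measure must give each $A_j$ measure zero; the third family is handled symmetrically. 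With this correction your Poincar\'e-recurrence conclusion stands, and the rest of the proof goes through unchanged.
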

\begin{proof}
It follows from the definition that $X$ is a coded shift with generating set $\cG$. Moreover, since for every $x=(x_k)_{k}\in X_{\rm seq}$,  we have that $x_k=d$ if and only if $x_k$ is the final letter of a generator in $\cG$, it follows that $\mathcal{G}$  is a unique representation\footnote{We note that since the sets $S_i$ may contain $0$ it is possible that a single generator in $\cG$ has multiple representations of the form of Equation \eqref{fggs}. This, however, does not impact the unique representability property.} of  $X$.  It follows from Lemma \ref{lem:Xseq_Sgap} that the only ergodic invariant probability measures  that put full measure on $X_{\rm lim}=X\setminus X_{{\rm seq}}$ are the Dirac measures $\delta_i, i\in {\rm I}$ supported on the fixed point $i^\infty$. Suppose that $\phi$ is a continuous potential with $P_{\rm top}(\phi)=\int \phi \,d\delta_i$. Let $(s_i^\ell)_\ell\in S_i$ where $s_i^\ell\to \infty$ as $\ell\to \infty$. Furthermore, for $j\in \{0,\dots,d-1\}$ with $j\not=i$, fix $s_j\in S_j$. Let $\sigma\in \Pi$ and $g_\ell
\in \cG$ be given by $s_0,\dots, s_i^\ell,\dots, s_{d-1}$ and $\sigma$ as in Equation \eqref{fggs}.
We denote the periodic measure supported of the periodic orbit $g_\ell^\infty$ by $\delta_{\ell}$. By using the continuity of $\phi$ and the structure of $g_\ell$, it is straight-forward to verify that $\int \phi \,d\delta_\ell\rightarrow \int \phi \,d\delta_i$ as $\ell \to \infty$. Moreover, it is easy to see that $d({\rm supp}\,\delta_{\ell} ,\cup_{i\in {\rm l} } {\rm supp}\,\delta_i)>0.$ 
Thus, Proposition \ref{proptriv} shows $FSSP(X)=C(X,\bR)$ for generalized gap shifts.  It is straight-forward to see that $\cG$ and $\cL(X)$ can be listed in order based on ordered listings of  the sets $S_0,\dots, S_{d-1}$ and permutation set $\Pi$.
Therefore, we apply Theorem A to establish the computability of  the topological pressure  on $X$. 
\end{proof}
\subsection{Beta-shifts}
Beta-shifts  are symbolic systems that were introduced by R\'{e}nyi in \cite{AR57}.  See \cite{P60,S2011} for subsequent  developments and additional references.  Beta-shifts have been studied both from the computability and the number theoretical point of view.  We briefly recall the basic properties of Beta-shifts following \cite{climenhaga2012intrinsic, cblog,S2011}.  We refer the reader to 
\cite{cblog,climenhaga2012intrinsic,Joh1999,S2011} for more details about the properties of Beta-shifts.

Let $\beta>1$ be a real number. 
The {\em Beta-shift} $X_{\beta}$ is the natural coding space associated with the $\beta$-transformation $T_{\beta}:[0,1)\rightarrow [0,1)$ given by
$T_{\beta}(x)=\beta x\,\, (\text{mod } 1)$. 
If $\beta\in\bZ$, then the corresponding Beta-shift is the full shift with $\beta$ symbols.  Therefore, we assume that $\beta$ is not an integer.

The coding space definition of  the Beta-shift follows: Given $x\geq 0$, let $\lfloor x\rfloor$ denote the integer part of $x$ and let $\{x\}$ denote the fractional part of $x$. For $x\in (0,1)$, we define 
two sequences  associated with $x$ as follows:
$a_1(x)=\{\beta x\}$ and $a_i(x)=\{\beta(a_{i-1}(x))\}$ for $i\geq 2$, and $x_1(x)=\lfloor \beta x\rfloor$ 
and $x_i(x)=\lfloor \beta a_{i-1}(x)\rfloor$ for $i\geq 2$. 
We call the sequence $(x_i)_{i\in\bN}$ the {\em beta-expansion} of $x$ where $x_i\in \mathcal{A}=\{0,\dots, \lfloor\beta\rfloor\}$.   
\begin{definition}
The Beta-shift $X_{\beta}$ is the closure of the set of $\beta$-expansions of all $x\in [0,1)$. 
\end{definition}
We recall that  $X_\beta$ is Sofic if and only if $\beta$ is eventually periodic.  In this case, it follows that if  the pre-periodic and the periodic parts of $\beta$ are given as input, then the computability of the topological pressure can be deduced from Proposition \ref{cpSofic}. Thus, we assume that $\beta$ is not eventually periodic.

An alternative and well-known method to define Beta-shifts is as  the set of paths on the countable directed labeled graph $\Gamma_\beta$, see Figure \ref{fig:beta} for an example. We briefly present this method,  following the exposition in \cite{climenhaga2012intrinsic}. We denote the lexicographic order on one-sided shift spaces  by $\preceq$. Let $\beta>1$ be a non-eventually periodic real number. There exists a unique sequence $b=b(\beta)=(b_1b_2b_3\cdots)$ which is the lexicographic supremum over all solutions to the equation
\[
\sum_{j=1}^\infty b_j\beta^{-j}=1.
\]
In other words, $b(\beta)$  is the lexicographic supremum of the $\beta$-expansions of $1$.  The Beta-shift is characterized by the following condition:
\begin{equation}\label{eq:shiftbetacondition}
x\in X_\beta \quad \Longleftrightarrow \quad f^n(x) \preceq b(\beta)\,\, {\rm for}\,\,{\rm all}\,\, n\in \bN_0
\end{equation}
Every Beta-shift can be presented as a countable directed labeled graph $\Gamma_\beta$ which is completely determined by $b=b(\beta)$.
We denote the vertices of  $\Gamma_\beta$ by $v_k$  for $k\in \bN$. For each $k\in \bN$, we add an edge from $v_k$ to $v_{k+1}$ and label it with $b_k$. 
Moreover, for all $k\in \bN$ and $i=0,\dots, b_k-1$, we add an edge  from $v_k$ to $v_1$ and label it with $i$. The Beta-shift $X_\beta$ coincides with the set of  sequences of labels  associated with the set of infinite paths on $\Gamma_\beta$. that start at $v_1$.
An example of such a graph $\Gamma_\beta$ is shown in Figure \ref{fig:beta}. 

\begin{figure}[hbt]
\begin{center}
\begin{tikzpicture}
\clip (-2.5,-1) rectangle (11.5,3);
\node [draw,circle,inner sep=0pt,minimum size=.6cm] (A) at (0,0) {$v_1$};
\node [draw,circle,inner sep=0pt,minimum size=.6cm] (B) at (2,0) {$v_2$};
\node [draw,circle,inner sep=0pt,minimum size=.6cm] (C) at (4,0) {$v_3$};
\node [draw,circle,inner sep=0pt,minimum size=.6cm] (D) at (6,0) {$v_4$};
\node [draw,circle,inner sep=0pt,minimum size=.6cm] (E) at (8,0) {$v_5$};
\filldraw (10.0,0) circle (.03);
\filldraw (10.5,0) circle (.03);
\filldraw (11,0) circle (.03);
\node at (9.8,0) (F) {};
\draw (E) edge[->] node[below]{$1$} (F);
\draw (A) edge[->] node[below]{$2$} (B);
\draw (B) edge[->] node[below]{$2$} (C);
\draw (C) edge[->] node[below]{$0$} (D);
\draw (D) edge[->] node[below]{$1$} (E);
\draw (B) edge[->,in=20,out=160] node[above]{0} (A);
\draw (B) edge[->,in=75,out=105] node[above]{1} (A);
\draw (D) edge[->,in=75,out=105] node[above]{0} (A);
\draw (E) edge[->,in=80,out=100] node[above]{0} (A);
\draw (A) edge[->,loop left,looseness = 15,in=160,out=200] node[left]{0} (A);
\draw (A) edge[->,loop left,looseness = 20,in=140,out=220] node[left]{1} (A);
\end{tikzpicture}
\end{center}
\caption{A graph representation of a Beta-shift.\label{fig:beta}}
\end{figure}
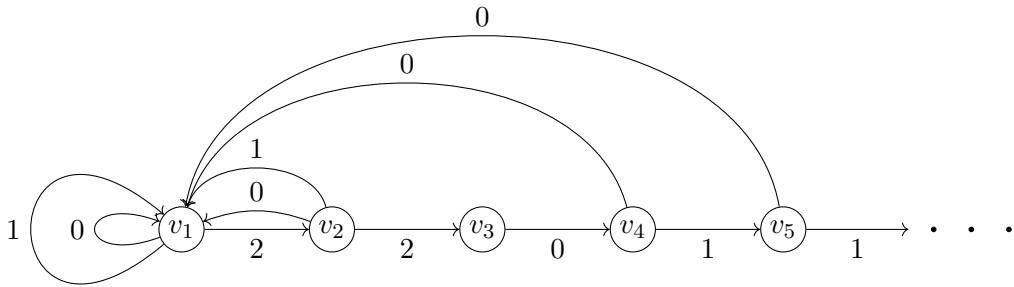

Since $X_\beta$ is a one-sided shift space, we can not deduce computability properties for the topological pressure by applying Theorem A directly.  We can, however, use the graph $\Gamma_\beta$  (which is irreducible) as a representation to associate to $X_\beta$ a  two-sided shift space $\hat{X}_\beta$.  A bi-infinite path  (or simply a path) $\gamma$ in $\Gamma_\beta$  is a bi-infinite sequence
of vertices $(v_{\ell_k})_{k\in \bZ}$ together with a bi-infinite sequence of edges $(e_{\ell_k})_{k\in \bZ}$ such that for all $k\in \bZ$, the edge $e_{\ell_k}$ is directed from $v_{\ell_k}$ to $v_{\ell_{k+1}}$. For a path $\gamma$, we define $x_\gamma=(x_k)_{k\in \bZ}$, where $x_k$ is the label associated the edge $e_{\ell_k}$. It follows that
\begin{equation*}%\label{defhatX}
\hat{X}_\beta\eqdef\overline{\{x_\gamma: \gamma \,\,{\rm path}\,\, {\rm in}\,\, \Gamma_\beta\}}
\end{equation*}
is a two-sided shift space which has  the same language as $X_\beta$.
 As in the proof of Proposition \ref{lem:case1}, we associate a potential 
$\hat{\phi}\in C(\hat{X}_\beta,\bR)$ to $\phi\in C(X_\beta,\bR)$ such that $\phi(x)=\hat{\phi}(y)$ whenever $x_k=y_k$ for all $k\geq 0$. 
It follows from  $\cL(X_\beta)=\cL(\hat{X}_\beta)$ and the definition of the topological pressure that
\begin{equation}\label{eqivpres}
P_{\rm top}(X_\beta,\phi)=P_{\rm top}(\hat{X}_\beta,\hat{\phi})
\end{equation}
for all $\phi\in C(X,\bR)$. Therefore, the computability of the topological pressure on $\hat{X}_\beta$ guarantees the computability of the topological pressure on $X_\beta$.
We now present our computability results for Beta-shifts. We start with a preliminary result.

\begin{lemma}\label{uniquelyrepresentablebeta}
Let $\beta>1$ be a non-integral real number which is not eventually periodic. Then $\hat{X}_\beta$ is a coded shift with unique representation
\begin{equation*}
\mathcal{G_\beta}=\{g_{j,i}=b_1\cdots b_{j}i: i<b_{j+1}\}\cup \{g_i=i:i<b_1\}.
\end{equation*}
\end{lemma}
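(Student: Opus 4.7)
The plan is to exploit the graph presentation $\Gamma_\beta$: every element of $\mathcal{G}_\beta$ corresponds to a first-return loop at the hub vertex $v_1$. The single-letter generator $g_i=i$ (with $i<b_1$) is the self-loop at $v_1$ labeled $i$, and $g_{j,i}=b_1\cdots b_j i$ (with $i<b_{j+1}$) traces the spine $v_1\to v_2\to\cdots\to v_{j+1}$ and then returns to $v_1$ via the edge labeled $i$. Consequently, every bi-infinite concatenation of elements of $\mathcal{G}_\beta$ is the label of a bi-infinite path in $\Gamma_\beta$ visiting $v_1$ infinitely often in both directions, so $X(\mathcal{G}_\beta)_{\mathrm{seq}}\subset\hat{X}_\beta$. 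For the reverse inclusion, every admissible finite word labels a finite path in $\Gamma_\beta$ which can be completed on both sides by paths eventually returning to $v_1$; passing to the closure yields $\hat{X}_\beta=X(\mathcal{G}_\beta)$.

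Next, I would check that $\mathcal{G}_\beta$ is a prefix code: single-letter generators begin with letters strictly less than $b_1$ while every multi-letter generator begins with $b_1$, and among multi-letter generators a prefix containment of $g_{j,i}$ in $g_{j',i'}$ with $j<j'$ would require $i=b_{j+1}$, contradicting $i<b_{j+1}$. This prefix property implies that, once a starting break point is fixed, the forward parse of $x$ is deterministic---essentially the right-resolving property of $\Gamma_\beta$.

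To upgrade to bi-infinite uniqueness, I argue by contradiction: suppose two valid decompositions $\pi_1,\pi_2$ of the same $x\in(\hat{X}_\beta)_{\mathrm{seq}}$ have distinct break-point sets $B_1\ne B_2$, and choose $k\in B_1\setminus B_2$. Then $\pi_2$ occupies some spine vertex $v_m$ with $m>1$ at position $k$, which forces $k-m+1\in B_2$ and $x_{k-m+1}\cdots x_{k-1}=b_1 b_2\cdots b_{m-1}$. Denote by $v_a$ the vertex of $\pi_1$ at position $k-m+1$. If $a=1$, the forward spine dynamics forces $\pi_1$ to arrive at $v_m\ne v_1$ at position $k$, contradicting $k\in B_1$. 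If $a>1$, the legal transitions of $\pi_1$ while reading the block $b_1\cdots b_{m-1}$ require $b_j\le b_{a+j-1}$ at each step $j\in\{1,\ldots,m-1\}$, with equality meaning continue on the spine and strict inequality meaning reset to $v_1$. Parry's lexicographic admissibility condition $\sigma^{a-1}(b)\preceq b$, together with the hypothesis that $\beta$ is not eventually periodic (so $\sigma^{a-1}(b)\ne b$), yields a smallest index $j^*$ with $b_{a+j^*-1}<b_{j^*}$. If $j^*\le m-1$, then at position $k-m+j^*$ the letter $b_{j^*}$ strictly exceeds the largest label $b_{a+j^*-1}$ available at vertex $v_{a+j^*-1}$, so $\pi_1$ has no valid edge in $\Gamma_\beta$, a contradiction. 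If $j^*>m-1$, $\pi_1$ remains on the spine throughout $[k-m+1,k]$ and arrives at $v_{a+m-1}\ne v_1$ at $k$, again contradicting $k\in B_1$. Both cases being impossible, $B_1=B_2$, and since the forward parse from a break point is uniquely determined, $\pi_1=\pi_2$.

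The main obstacle is the bi-infinite uniqueness argument in the third paragraph; identifying generators with first-return loops at $v_1$ and verifying the prefix-code property are essentially bookkeeping. The non-integer, non-eventually-periodic hypothesis on $\beta$ enters the proof precisely at the invocation of Parry's condition in its strict form---this is what guarantees that the disagreement index $j^*$ exists, making the invalid-edge contradiction realizable at a finite position.
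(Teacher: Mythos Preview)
Your proof is correct. Both your argument and the paper's hinge on the same core fact---Parry's lexicographic condition $\sigma^{n}(b)\prec b$ (strict since $\beta$ is not eventually periodic)---but the packaging differs. The paper argues directly on the symbol level: it calls generators beginning with $b_1$ \emph{forward generators}, shows that two overlapping forward generators must coincide (an overlap with offset $s>0$ would force $b\prec\sigma^{s}(b)$, contradicting the Beta-shift characterization), and then observes that the gaps between forward generators are filled uniquely by single-letter backward generators. You instead translate decompositions into bi-infinite paths on $\Gamma_\beta$, identify break points with visits to the hub $v_1$, and exploit the right-resolving property of $\Gamma_\beta$ together with Parry's condition to show that two such paths must share the same $v_1$-visit set. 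Your graph-theoretic framing makes the ``deterministic forward parse'' statement (equivalently, the prefix-code property) transparent and explains structurally \emph{why} the lex condition enters; the paper's approach is shorter and stays purely in symbol space without invoking $\Gamma_\beta$ beyond the generating-set step. Either way, the crucial inequality $b_{a+j^*-1}<b_{j^*}$ you isolate is precisely the one the paper reaches, just read off at a vertex rather than inside a word.
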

\begin{proof}
We observe that $\hat{X}_\beta$ is a coded shift since it is presented by the irreducible countable labeled directed graph $\Gamma_\beta$,
see  \cite{MarcusLind1995}.
Let $\gamma$ be a bi-infinite path on $\Gamma_\beta$.  We observe that, for any finite subpath $w$ of $\gamma$, $w$ can be extended to a finite path $\widetilde{w}$ that begins and ends at vertex $v_1$.  Such a path can be written as a finite concatenation of generators in $\mathcal{G}_\beta$.  Therefore, $0^\infty\widetilde{w}0^\infty\in X_{\rm seq}$, and, by using increasing-length subpaths of $\gamma$, we can construct a sequence of elements of $X_{\rm seq}$ which converge to $x_\gamma$, and we conclude that $\cG_\beta$ is a generating set for $\hat{X}_\beta$.

Next we show that $\cG_\beta$ is a unique representation.
We call generators of the form $g_{j,i}$ {\em forward generators} where $b_1\cdots b_j$ forms the {\em forward step} and the final $i$ is the {\em backward step}.  Similarly, we call generators of the form $g_i$ {\em backward generators}.  We observe that the character $b_1$ can only appear in forward generators.

Let $x=\cdots p_{-2}p_{-1}p_0p_1p_2\cdots=\cdots q_{-2}q_{-1}q_0q_1q_2\cdots$ be two representations of $x$ by a concatenation of generators.  We first show that the forward generators, i.e., those generators containing $b_1$'s, are identical.  Suppose that two forward generators $p_i$ and $q_j$ overlap.  Without loss of generality, we assume that $p_i$ starts at or before $q_j$.  Let $k_p$ be the starting point of $p_i$ and $k_q$ be the starting point of $q_j$.  We show that $p_i$ cannot end before $q_j$ ends as follows: Suppose that $p_i$ ends at index $l$ and that $q_j$ continues beyond this index, then, in the overlap, $b_1=x_{k_p}=b_{k_q-k_p+1},\dots,b_{l-1}=x_{l-1}=b_{k_q-k_p+l-1}$ and $b_{l}=x_l<b_{k_q-k_p+l}$.  This contradicts the characterization of Beta-shifts in Equation \eqref{eq:shiftbetacondition}.  If $p_i$ and $q_j$ begin at the same point, symmetry shows that $p_i=q_j$.  Otherwise, since $p_i$ begins with $b_1$, there is some $q_{j'}$ which overlaps with the beginning of $p_i$.  By the argument above, $q_{j'}$ cannot end before $p_i$ ends, but this contradicts the existence of $q_j$.  Therefore, $p_i$ and $q_j$ begin at the same index and are equal.  

Since the forward generators of the two expansions of $x$ have been shown to be equal, all that is left are the gaps between forward generators.  Since $b_1$ doesn't appear in these gaps, all of these remaining generators must be backward generators.  For any such index $k$, $p_i=x_k=q_j$ as backward generators correspond to single characters.  Therefore, the two expansions of $x$ are identical and the result follows.
\end{proof}

For two generators $g_{j_1,i_1}$ and $g_{j_2,i_2}$, we say that $g_{j_1,i_1}$ appears as a substring in the {\em interior} of $g_{j_2,i_2}$ if $g_{j_1,i_1}$ is a substring of $b_2\dots b_{j_2}$.  An {\em infinite chain} of generators in $\cG_\beta$ is defined to be a sequence of generators $(g_{j_k,i_k})_{k\in\bN}$ where each $g_{j_k,i_k}$ appears as a substring in the interior of $g_{j_{k+1},i_{k+1}}$.

\begin{theorem}\label{comppresbeta}Let $\beta>1$ be a non-integral real number which is not eventually periodic.  In addition, assume that $\cG_\beta$ does not have any infinite chains of generators.  Then $FSSP(\hat{X}_{\beta})=C(\hat{X}_{\beta},R)$.
Moreover, if $\beta$ is computable, then the topological pressure is computable
on $X_\beta$.
\end{theorem}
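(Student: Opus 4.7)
The plan is to apply Proposition~\ref{proptriv} to show $FSSP(\hat{X}_\beta)=C(\hat{X}_\beta,\bR)$, and then combine Theorem~A with Equation~\eqref{eqivpres} for the computability claim. First I would use the graph representation $\Gamma_\beta$ to describe $X_{\lim}$. Because each vertex $v_k$ with $k\geq 2$ has a unique incoming edge (from $v_{k-1}$, labeled $b_{k-1}$), any bi-infinite path in $\Gamma_\beta$ automatically visits $v_1$ infinitely often in the backward direction. Hence $x\in X_{\rm seq}$ if and only if the forward direction of the associated path visits $v_1$ infinitely often; equivalently, $X_{\lim}$ is the set of $x\in\hat{X}_\beta$ whose forward tail eventually follows the spine, i.e.\ there exist $K\in\bZ$ and $m\geq 1$ with $x_{K+j}=b_{m+j}$ for all $j\geq 0$. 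The hypothesis that $\cG_\beta$ has no infinite chains of generators, which is equivalent to bounded self-overlap of $b$, is exactly what allows this finite, spine-based description to exhaust $\hat{X}_\beta\setminus X_{\rm seq}$ and rules out anomalous limit-of-path points with unbounded vertex excursions.

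Next I would verify both hypotheses of Proposition~\ref{proptriv} at once by proving the stronger statement that no $f$-invariant probability measure can be supported on $X_{\lim}$. For $x\in X_{\lim}$, define $K^*(x)$ to be the smallest integer $K$ for which $(K,m)$ is a valid pair for some $m\geq 1$. Since $b$ is not eventually periodic, the phase $m$ associated with any valid pair is uniquely determined, and the valid $K$'s form an upper half-line, so $K^*(x)\in\bZ$. A direct computation from the shift relation $f(x)_{K+j}=x_{K+1+j}$ gives $K^*(f(x))=K^*(x)-1$. Therefore the level sets $A_n=\{x\in X_{\lim}:K^*(x)=n\}$ are pairwise disjoint shift-translates of $A_0$ with $\mu(A_n)=\mu(A_0)$, and $X_{\lim}=\bigsqcup_{n\in\bZ}A_n$. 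Since $\mu$ is a probability measure, this forces $\mu(A_0)=0$, hence $\mu(X_{\lim})=0$. Consequently $P_{\rm lim}(\phi)=-\infty$ for every $\phi\in C(\hat{X}_\beta,\bR)$ while $P_{\rm seq}(\phi)$ is finite (as $X_{\rm seq}$ contains periodic points), so $P_{\rm seq}(\phi)>P_{\rm lim}(\phi)$ for every $\phi$ and both hypotheses of Proposition~\ref{proptriv} hold vacuously. This yields $FSSP(\hat{X}_\beta)=C(\hat{X}_\beta,\bR)$.

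For the computability claim, assume $\beta$ is computable. Then $b=b(\beta)$ is computable character by character from $\beta$ (via iterated application of the $\beta$-transformation followed by taking integer parts). Given $b_1\cdots b_n$, Lemma~\ref{uniquelyrepresentablebeta} enumerates all generators of $\cG_\beta$ of length at most $n+1$, and the characterization~\eqref{eq:shiftbetacondition} yields a decision procedure for membership of a length-$n$ word in $\cL(X_\beta)=\cL(\hat{X}_\beta)$ by comparing each suffix lexicographically against the corresponding prefix of $b$. This produces oracles for both the generating set and the language of $\hat{X}_\beta$. Applying parts~(i) and~(ii) of Theorem~A gives computability of $\phi\mapsto P_{\rm top}(\hat{X}_\beta,\phi)$; Equation~\eqref{eqivpres} then transfers computability to $P_{\rm top}(X_\beta,\phi)$.

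The main obstacle I anticipate is justifying the graph-based characterization of $X_{\lim}$ rigorously. Since $\Gamma_\beta$ is right-resolving but not left-resolving and $\hat{X}_\beta$ is defined by a topological closure, one must argue that every point in $\hat{X}_\beta\setminus X_{\rm seq}$ actually admits the spine description used in the definition of $K^*$. The no-infinite-chain hypothesis is the precise quantitative property of $b$ that rules out limit points whose vertex sequences escape to infinity and thereby ensures that $K^*$ is well-defined on all of $X_{\lim}$.
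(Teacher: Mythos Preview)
Your overall strategy---prove that no $f$-invariant probability measure can live on $X_{\lim}$ by a wandering-index argument, conclude $P_{\lim}\equiv-\infty$, and then invoke Theorem~A together with Equation~\eqref{eqivpres}---is exactly the paper's. The computability paragraph is also essentially the same. The gap is in your description of $X_{\lim}$.

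You argue from the graph $\Gamma_\beta$ that every bi-infinite \emph{path} visits $v_1$ infinitely often in the backward direction, and hence that $X_{\lim}$ consists precisely of points whose forward tail eventually follows the spine. That implication is valid for labels of genuine bi-infinite paths, but $\hat X_\beta$ is the \emph{closure} of such labels, and the closure contains points whose backward half escapes to infinity along the spine: points $x$ for which there is an index $k$ such that $x_{k-l}\cdots x_k$ is a forward generator $g_{l,x_k}$ for infinitely many $l$. For such $x$ the forward half after $k$ is unconstrained---it may, for instance, be $0^\infty$ or any other concatenation of generators---so there need be no $(K,m)$ with $x_{K+j}=b_{m+j}$ for all $j\ge 0$, and your $K^*$ is undefined. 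These points form the paper's set $B$, and the paper handles them by a second wandering-index argument (the distinguished $k$ is shown to be unique using Lemma~\ref{uniquelyrepresentablebeta}, and then the level sets $B_j$ are disjoint shift-translates). Crucially, the no-infinite-chain hypothesis does \emph{not} eliminate $B$; it is invoked only to rule out a third configuration, where generators overlap unboundedly in both directions without any single anchoring index, thereby proving $X_{\lim}=A\cup B$.

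So the fix is not to sharpen the use of the chain hypothesis, but to add the backward-escape set $B$ alongside your forward-tail set and run the identical disjoint-translates argument on it. Once that is done your proof coincides with the paper's.
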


\begin{proof} 
We show that no invariant measure puts full measure on $\hat{X}_{\beta}\setminus \hat{X}_{\beta,\rm seq}$.  We write this difference as a union of two zero-measure sets $A$ and $B$.

Let $A$ be the set of $x\in\hat{X}_{\beta}\setminus \hat{X}_{\beta,\rm seq}$ such that $x$ has a right-tail of the form $b_1b_2b_3\dots$.  The starting index of such a tail is unique since otherwise $\beta$ is periodic.  Let $A_j$ be the set of all $x\in A$ which start at index $j$.  We observe that $f(A_j)=A_{j-1}$ and that $A=\bigcup_{j\in\bZ}A_j$.  Since the $A_j$'s are disjoint, for any $\mu\in\cM$, $\mu(A_0)=0$ since otherwise $\mu(A)$ is infinite.  Therefore, $\mu(A)=0$.

Let $B$ be the set of $x\in\hat{X}_\beta\setminus\hat{X}_{\beta,\rm seq}$ such that there exists a index $k$ such that for infinitely many $l\in\bN$, $x_{k-l}\dots x_k$ is a forward generator in $\cG_\beta$.  We show that only one such index exists as follows: Suppose that $k_p$ and $k_q$ are two indices with the described property, and, without loss of generality, suppose that $k_p<k_q$.  Fix $l_q\in\bN$ such that $x_{k_q-l_q}\dots x_{k_q}$ is a forward generator.  Then, we may choose an $l_p\in\bN$ so that $k_p-l_p<k_q-l_q$ and $x_{k_p-l_p}\dots x_{k_p}$ is a forward generator.  Since generators begin with $b_1$, the argument in Lemma \ref{uniquelyrepresentablebeta} shows that this is a contradiction.  Let $B_j$ be the set of all $x\in B$ which start at index $j$.  By the same argument as for $A$, we find that for all $\mu\in\cM$, $\mu(B)=0$.

We now show that $A\cup B=\hat{X}_{\beta}\setminus \hat{X}_{\beta,\rm seq}$.  The situation not covered by $A$ and $B$ would be where there are infinitely many indices $k$, each of which has finitely many $l$'s so that $x_{k-l}\dots x_k$ is a forward generator.  In addition, there are sequences $(k_i)_{i\in\bN}$ and $(l_i)_{i\in\bN}$ such that $k_i\rightarrow\infty$ and $(k_i-l_i)\rightarrow-\infty$.  In this situation, $\cG_\beta$ must contain infinite chains of generators, which is not possible.

It is shown in \cite{S2011} that the language of the  Beta-shift is recursive iff $\beta$ is a computable real number. Moreover, since $\beta$  is a computable non-preperiodic real number, there exists a Turing machine that implements the greedy algorithm to compute $b(\beta)$. Thus there is a Turning machine that generates $\cL(\hat{X}_\beta)$ and 
$\cG_\beta$. Applying Theorem A shows that $P_{\rm top}(\hat{X}_\beta,.)$ is computable on $C(\hat{X}_{\beta},R)$. Finally, the topological pressure  is computable by Equality \eqref{eqivpres}
on $X_\beta$.
\end{proof}
We remark that the computability result in Theorem \ref{comppresbeta} is only novel for non-constant potentials $\phi$. Indeed, if $\phi=c$ then $P_{\rm top}(X_\beta,\phi)=h_{\rm top}(X_\beta)+c=\log \beta +c$ which is computable since $\beta$ is computable.

%-_-_-_-_-_-_-_-_-_-_-_-_-_-_-_-_-_-_-_-_-_-_-_-_-_-_-_-_-_-_-_-_-_-_-_-_-_-_-_-_-_-_-_-_-_-_-_-_-_-_-_-_-_-_-_-_-_-_-_-_-_-_-_-_
\section{Concluding remarks} \label{sec:concl}

We end with some remarks on our assumptions and results as well as some open questions. 
\subsection{Examples of noncomputability of pressure and entropy}
The following examples shows that in general $FSP(X)\not=C(X,\bR)$, and, hence, $FSSP(X)\not=C(X,\bR)$.
\begin{example}\label{counter}
Let $X$ be the coded shift generated by $\mathcal{G}=\{(000)^k1^k\}_{k\in\bN}$.  Let $\phi\in C(\Sigma,\bR)$ where 
$$\phi(x)=\begin{cases} 1 &\mbox{if } x_0=1   \\
0 & \mbox{if } x_0=0.\end{cases}$$
Let $\delta_{\mathcal{O}(1)}$ be the Dirac measure supported on the periodic point $\mathcal{O}(1)$.  
We observe that, first, $\htop(X)\leq \log 2$; second, by the Birkhoff ergodic theorem, for any invariant measure $\mu\not=\delta_{\mathcal{O}(1)}$, $\int \phi d\mu<\frac{1}{4}$; and, third, $\int \phi d\delta_{\mathcal{O}(1)}=1$.  Hence, it follows from the variational principle in Equation \eqref{varpri}, that $\Ptop(X,\phi|_X)=1$.  Moreover, $\phi\not\in FSP(X)$, so $FSP(X)\not=C(X,\bR)$.  Furthermore, the pressure map $\Ptop: \ClosedShift\times C(\Sigma, \mathbb{R})\rightarrow\mathbb{R}$ is not computable at $(X,\phi)$ since, for any $m\in\bN$, let $X_m$ be the coded shift generated by $\mathcal{G}_m=\{(000)^k1^k\}_{k\in\{1,\dots,m\}}$.  Then, there is a uniform gap between the pressures $P(X_m,\phi)$ and $P(X,\phi)$ since $P(X_m,\phi)\leq\frac{1}{4}+\log 2<1$.

Additionally, we observe that
$$\Ptop(\phi,X)=\sup_{\mu\in \mathcal{M}(X)}\int \phi d\mu.$$
Moreover, this equality holds for all small perturbations of $\phi$.  Hence, for shifts with positive entropy, the set of potentials where Inequality (\ref{eqposent}) holds may not be dense.
\end{example}
  
The following example shows that the entropy may not be computable when the constant zero function is not in $FSP(X)$.  We use a graph and construction inspired by \cite{Petersen1986}, see Figure~\ref{fig:graph:Peterson}.  

\begin{example}%\label{peterson-entropy}
Let $G$ be the graph in Figure~\ref{fig:graph:Peterson}.  Suppose that $\phi=0$, i.e., the pressure is the entropy and the potential is a computable function.  Let $X_0\subset\{0,1\}^{\mathbb{Z}}$ be a minimal set which is not intrinsically ergodic and where $\htop(f)>0$, see \cite[p.\ 157]{ErgTh_compact2006} for additional details.  We recall that a minimal set cannot contain an SFT with positive entropy because SFTs include periodic points.  Let $(m_0,m_1,\dots)$ be the right-half of an element $m$ of $X_0$.  We label the horizontal arrows in $G$ by the $m_i$'s and the curved edges in $G$ by $2$.

\begin{figure}[hbt]
\begin{center}
\begin{tikzpicture}
\node at (0,0) (A) {};
\node at (2,0) (B) {};
\node at (4,0) (C) {};
\node at (6,0) (D) {};
\node at (8,0) (E) {};
\filldraw (A) circle (.1);
\filldraw (B) circle (.1);
\filldraw (C) circle (.1);
\filldraw (D) circle (.1);
\filldraw (E) circle (.1);
\filldraw (9.0,0) circle (.03);
\filldraw (9.5,0) circle (.03);
\filldraw (10,0) circle (.03);
\node at (8.8,0) (F) {};
\draw (E) edge[->] (F);
\draw (A) edge[->] node[below]{$m_0$} (B);
\draw (B) edge[->] node[below]{$m_1$} (C);
\draw (C) edge[->] node[below]{$m_2$} (D);
\draw (D) edge[->] node[below]{$m_3$} (E);
\draw (B) edge[->,in=20,out=160] node[above right]{2} (A);
\draw (C) edge[->,in=33.33,out=146.67] node[above right]{2} (A);
\draw (D) edge[->,in=46.67,out=133.33] node[above right]{2} (A);
\draw (E) edge[->,in=60,out=120] node[above right]{2} (A);
\end{tikzpicture}
\end{center}
\caption{Labeled graph $G$ whose underlying graph appears in \cite{Petersen1986}.  The horizontal edges are labeled with a minimal subset of $\{0,1\}^{\bZ}$ which is not intrinsically ergodic and with positive topological entropy.\label{fig:graph:Peterson}}
\end{figure}
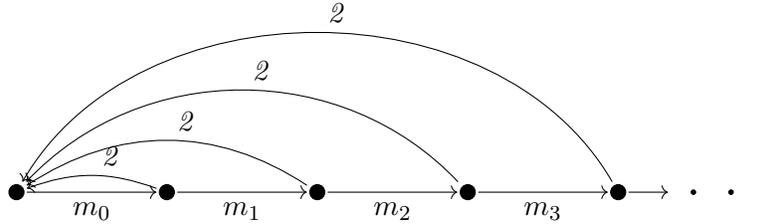

Let $L(G)$ be the set of all labelings of bi-infinite paths $\xi=\cdots e_{-1}e_0e_1\cdots$ on $G$.  This graph determines a $2$-block map $\pi: L(G)\rightarrow \{0,1,2\}^{\pm}$.  Let $X$ denote the closure of $\pi(L(G))$. Then $X_0\subset X$, so $\htop(f,X)\geq\htop(f,X_0).$  For any integer $k$, we define $N(k)$ so that $m_0m_1\cdots m_{N(k)}$ contains all the length-$k$ strings in $m$.  Let $G_k$ be the subgraph of $G$ formed by deleting all vertices to the right of endpoint of $m_{N(k)}$.  The language of $G_k$ agrees with the language of $G$ up to length $k$, so the corresponding shift $X_k$ satisfies $d(X,X_k)<2^{-k}$.  On the other hand, $\htop(X_k)=0$ while $\htop(X)>0$.  Hence the zero function is not in $FSP(X)$. 
\end{example}

Beta transformations are prototypes of piecewise expanding interval transformations. Our work leads to the problem of identifying which types of  piecewise expanding interval maps satisfy $FSP(X)=C(X,\bR)$.  As first step in this direction one could consider Alpha-Beta transformations, see, e.g. \cite{cblog}.
\begin{question}Does $FSP(\hat{X}_{\alpha,\beta})= C(\hat{X}_{\alpha,\beta},R)$ hold for all Alpha-Beta-shifts $\hat{X}_{\alpha,\beta}$? 
\end{question}

\subsection{Computability of the integral function}
The computability of the integral function 
$$I: \Sigma_{\rm invariant}\times C(\Sigma,\mathbb{R})\rightarrow \mathbb{R}\quad\text{where}\quad (X,\phi)\mapsto \sup_{\mu\in\mathcal{M}(X)}\int \phi d\mu$$
is closely related to the computability of the pressure.  Indeed, the non-computability in Example~\ref{counter} is derived from the non-computability of this function.  The computability of this integral has been studied for SFTs in \cite{BSW}.  Due to the strong relationship to the computability of the pressure, we pose the question:
\begin{question}
On which subset of $\Sigma_{\rm invariant}\times C(\Sigma,\mathbb{R})$ is the function $I$ computable?
\end{question}

\section*{Acknowledgement}The authors thank Ethan Akin and Benjamin Steinberg for helpful discussions about  uniquely representable  coded shifts.

%-_-_-_-_-_-_-_-_-_-_-_-_-_-_-_-_-_-_-_-_-_-_-_-_-_-_-_-_-_-_-_-_-_-_-_-_-_-_-_-_-_-_-_-_-_-_-_-_-_-_-_-_-_-_-_-_-_-_-_-_-_-_-_-_

%-_-_-_-_-_-_-_-_-_-_-_-_-_-_-_-_-_-_-_-_-_-_-_-_-_-_-_-_-_-_-_-_-_-_-_-_-_-_-_-_-_-_-_-_-_-_-_-_-_-_-_-_-_-_-_-_-_-_-_-_-_-_-_-_-_-_-_-_-_-_-_-_-_-_-_-
\bibliographystyle{abbrv}
\bibliography{Shuddho_references,BSW_References}

\end{document}